\documentclass[preprint] {imsart}
\RequirePackage[OT1] {fontenc}

\usepackage{mypackage}
\usepackage{mymacros2}
\usepackage{myenv}
\usepackage{mymacros_brackets}

\usepackage{DTR}

\definecolor{jcolor}{RGB}{041,122,000}
\definecolor{darkblue}{RGB}{000,000,150}
\definecolor{darkred}{RGB}{100,000,000}
\definecolor{purple}{RGB}{200,000,200}

\myexternaldocument{Supplement}
\begin{document}

\begin{frontmatter}
\title{Finding the Optimal Dynamic Treatment Regimes Using Smooth Fisher Consistent Surrogate Loss}

\author{\fnms{Nilanjana} \snm{Laha}\thanksref{b}\ead[label=e1]{nlaha@tamu.edu}},  
\author{\fnms{Aaron} \snm{Sonabend-W}\thanksref{a}\ead[label=e2]{asonabend@gmail.com}}, 
\author{\fnms{Rajarshi} \snm{Mukherjee$^\dag$}\thanksref{a}\ead[label=e3]{ram521@mail.harvard.edu}}
\and
\author{\fnms{Tianxi} \snm{Cai$^\dag$}\thanksref{a}\ead[label=e4]{tcai@hsph.harvard.edu}}
\address[b]{Department of Statistics, Texas A\&M,  College Station, TX 77843}
\address[a]{Department of Biostatistics, Harvard University, 677 Huntington Ave, Boston, MA 02115}
\footnotetext{$^\dag$: Equal Contributors}

\runtitle{Smooth Surrogates }

\runauthor{ Laha et al.}

\begin{abstract}
Large health care data repositories such as electronic health records (EHR) open new opportunities to derive individualized treatment strategies for complicated diseases such as sepsis. In this paper, we consider the problem of estimating sequential treatment rules tailored to a patient's individual characteristics, often referred to as dynamic treatment regimes (DTRs). Our main objective is to find the optimal DTR that maximizes a discontinuous value function through direct maximization of Fisher consistent surrogate loss functions. In this regard, we demonstrate that a large class of concave surrogates fails to be Fisher consistent -- a behavior that differs from the classical binary classification problems. We further characterize a non-concave family of Fisher consistent smooth surrogate functions, {\color{black} which\label{page: off-the-shelves} is amenable to gradient-descent type optimization algorithms}. Compared to the existing direct search approach under the support vector machine framework \citep{zhao2015}, our proposed DTR estimation via surrogate loss optimization (DTRESLO) method is more computationally scalable to large sample sizes and allows for broader functional classes for treatment policies. We establish theoretical properties for our proposed DTR estimator and obtain a sharp upper bound on the regret corresponding to our DTRESLO method. The finite sample performance of our proposed estimator is evaluated through extensive simulations. Finally, we illustrate the working principles and benefits of our method for estimating an optimal DTR for treating sepsis using EHR data from sepsis patients admitted to intensive care units.

\end{abstract}
\date{\today}

\begin{keyword}
\kwd{Dynamic treatment regimes}
\kwd{Classification}
\kwd{Empirical risk minimization}
\kwd{Non-convex optimization}
\end{keyword}
\end{frontmatter}

 \listoftodos[]

\section{Introduction}

Due to the increasing adoption of electronic health records (EHR) and 
the linkage of EHR with bio-repositories and other research
registries, integrated large datasets have become available for \textbf{real world evidence} based precision medicine studies. These rich EHR data capture heterogeneity in response to treatment over time and across patients, thereby offering unique opportunities to optimize treatment strategies for individual patients over time. Sequential treatment decisions tailored to patients' individual characteristics at given decision time points are often referred to as dynamic treatment regimes (DTRs) in the statistical literature and reinforcement learning (RL) in the machine learning literature. An optimal DTR can be defined as the sequential treatment assignment rule that maximizes the expected counterfactual outcome, often referred to as the value function in the DTR literature. 

To estimate the optimal DTR, the most traditional approaches rely on modelling the data-distribution  or part of the data-distribution \citep{xu2016, zajonc2012}. The most popular among the latter class are the regression-based methods, including Q-learning, A-learning and marginal structural mean models \citep[][]{Watkins1989, murphy2003, schulte2014, orellana2010, Robins2004}. The regression-based methods, especially Q-learning, offers the flexibility necessary for extension to a variety of settings including, but not limited to, semi-supervised setting \citep{sonabendw2021semisupervised}, interactive model-building \citep{laber2014}, discrete outcomes or utilities \citep{moodie2014} etc. However, 
the  underlying models in the regression-based approaches are often high-dimensional, and susceptible to mis-specification due to the sequential nature of the problem \citep{MurphySA2001MMMf}. Although A-learning and marginal structural mean models are more robust to model mis-specification, they still require the contrast of Q-functions to be correctly specified  \citep[cf.][]{schulte2014}. 
These limitations of the regression-based methods led the conception of the classification-based direct search methods, which, in contrast, directly targets  the counterfactual value function.

The classification-based approaches essentially rely
 on the representation of the counterfactual value function through importance sampling \citep{MurphySA2001MMMf}, whose maximization can be framed as a classification problem with respect to the zero-one loss function \citep[cf.][and the references therein]{zhao2012, zhao2015, chen2016personalized, zhou2017, song2015sparse, chen2017, cui2020semiparametric}. The resulting objective function is not amenable to efficient optimization  owing to the discontinuity of the zero-one loss. Therefore, following contemporary classification literature \citep[cf.][]{bartlett2006, lin2004note}  the  direct search methods aim to  replace the zero-one loss with alternative smoother fisher consistent surrogate loss functions to facilitate efficient classification methods.  The paradigm shift of estimating DTRs by finding classification rules is a powerful idea. Some authors indicate that existing  direct search methods  outperform regression-based counterparts  when the number of stages is small \citep{laber2019, luedtke2016}.

  Although initially developed for the one stage case, direct search method  was introduced to the  multi-stage DTR by the novel work of \cite{zhao2015}. 
 Currently, it has two mainstream approaches.  The first approach  performs binary classification stagewise   in a backward fashion  \citep[cf. BOWL method of ][]{zhao2015, jiang2019}. However, at stage $t$,  this approach can only use those observations whose treatment assignment matches the optimal treatment stage $t+1$ onward. As a result, the effective sample size of the initial stages dwindles rapidly, which can be problematic during practical implementation  \citep{laber2019, kallus2020comment}.  
 The other approach builds on a simultaneous optimization method, which utilizes the  whole data-set for estimating each treatment assignment \citep[simultaneous outcome weighted learning (SOWL),][]{zhao2015}. While it does not share the limitation of the BOWL-type approaches, this approach hinges on a sequential weighted classification problem which is complicated by the dependent nature of the DTR setting.  \cite{zhao2015}  solves this classification using a bivariate hinge-loss type surrogate. Although the idea behind simultaneous optimization is powerful,  the implementation via non-smooth hinge-loss surrogate leads to   a number of issues, scalability being one of them. See Section~\ref{sec: related literature} for more details.  
 It is natural to ask whether the hinge loss can be replaced by other surrogates. However, the answer is not immediate because unlike BOWL, the simultaneous classification does not yield to the  
binary classification theory on surrogate losses  \citep{bartlett2006}. Although multicategory and multi-label classifications have apparent resemblance with this  classification problem, as we will see, they have fundamental differences. This gives rise to the need for a unified study of fisher consistent surrogate losses under the DTR setting. Our paper is the first step towards that end.

For the ease of presentation, we focus on $k=2$ stage DTRs associated with two time points in this paper. However, the main  methodology easily extends to general k-stage settings when $k> 2$. Similar to most current works in direct search methods, we  consider only a binary treatment indicator, which is an important practical case \citep{laber2017}. Direct search with multi-level treatments would require substantially different techniques, and is out of the scope of the present paper. 

\subsection{Main contributions:}
In the sequel, we will refer to the classification problem resulting from the simultaneous optimization approach as ``the DTR classification problem" for brevity. We will refer to our approach of achieving optimal {\em DTR estimation via surrogate loss optimization} as DTRESLO. 
  \paragraph{Concave losses:}{
 In Theorem~\ref{thm: concave: main}, we establish that the above-bounded smooth concave surrogates fail to be Fisher consistent in the DTR context. The failure is not restricted to only smooth concave surrogates since our Theorem~\ref{theorem: hinge} also shows that  non-smooth hinge loss also fails to be Fisher consistent. 
    Furthermore, we have not encountered any concave loss function that is Fisher-consistent in the DTR context. Consequently, our findings naturally prompt the question of whether any concave loss function can indeed achieve Fisher consistency for this problem.
  }
  
  \paragraph{A class of Fisher consistent surrogates for DTR Estimation:}{
  Given the limited promise of concave surrogate losses for this problem, we directed our attention toward the realm of non-concave surrogates. We introduce a class of non-concave Fisher consistent surrogate losses (see Theorem~\ref{lemma: approximation error}), which are amenable to efficient gradient-based algorithms, such as stochastic gradient descent. This facilitates the utilization of fast and scalable optimization methods. {\color{black} {Since}\label{non-concave 1} the resulting optimizing problem is non-concave, convergence to the global maximum is not automatically guaranteed. However, the class of surrogate losses we consider do exhibit reliable empirical performance across all our simulation settings. } 
  Our approach offers flexibility for learning the DTRs so that practitioners can tailor the method to the data and problem at hand. 
 In particular, the smoothness of our surrogate losses makes the optimization problem suitable to a broad range of standard machine learning algorithms including, but not limited to, neural networks, wavelet series, and basis expansion. Interpretable treatment rules are also achievable by coupling our DTRESLO method with interpretable classifiers, such as linear or tree-based classifiers.
 Finally, since we optimize the primal objective function, variable selection in our case is straight-forward via addition of an $l_1$ penalty. 
  }
 
 \paragraph{Theoretical guarantee for a class of DTR estimators} We provide sharp upper bound on the regret -- the difference between the optimal value function and the value attained by  the estimated treatment regime, with detailed analyses focused on searching for DTR within the neural network classifiers. We perform a sharp analysis of our approximation error (see Theorem~\ref{thm: approximation error}) and  estimation  error  under Tsybakov's small noise condition  \citep{tsybakov2004}.
 In Corollaries~\ref{theorem: GE: neural network} and \ref{cor: wavelets: weak convergence}, we prove that provided the optimization error is small, the regret of our DTRESLO method with neural network and wavelet series classifiers decays at a fast rate. Here by \textit{fast}, we mean  decay rate faster than $n^{-1/2}$ is achievable. It turns out that this rate also matches 
 the minimax rate of risk decay (up to a poly-logarithmic factor) of binary classification  under  assumptions similar to ours \citep{audibert2007}.
  Since two stage DTR is unlikely to be simpler than one stage DTR, we conjecture that that our rate is   minimax-optimal (up to a poly-logarithmic factor) in two stage DTR under our assumptions. 
In the special case when treatment effects are bounded away from zero, we show that our regret decays at the rate of $O(1/n)$ up to a poly-logarithmic order. 

The rest of the article is organized as follows. In Section \ref{sec: DTR set-up} we outline the problem and discuss the mathematical formulation. In Section \ref{sec: concave} we discuss Fisher consistency in the DTR setting,  show that a large class of concave surrogates fail to be Fisher consistent, and  establish the Fisher consistency of a family of non-concave surrogates. In Section~\ref{sec: main methodology} we construct a method for estimating the optimal DTRs using the Fisher consistent surrogates, and discuss  the potential sources of error that contribute to the regret.  Section~\ref{sec: Approximation error} and Section~\ref{sec: generalization} are devoted towards obtaining theoretical upper bounds of the regret of our DTRESLO method.  Section~\ref{sec: Approximation error} focuses on approximation error, which is combined with the estimation error in Section~\ref{sec: generalization} to yield  the final regret bound. Section \ref{sec: opt: summary} provides a summary of the primary results concerning optimization error, with a comprehensive analysis available in Supplement \ref{sec:opt error}.
Then in section \ref{section: simulation & application} we illustrate our DTRESLO method's empirical performance with extensive simulations and an application to a sepsis cohort. We continue with a discussion in Section \ref{sec: discussion}. Additional details and proofs of our theoretical results are deferred to the Supplement.

\subsection{Notation}
\label{sec: notation}
  We let $\bRR$ denote the extended real line $\RR\cup\{\pm \infty\}$ and write $\RR_+$ for the positive half line $\{x\in\RR:x>0\}$. Denote by $\NN=\{1,2,\ldots\}$ the set of all natural numbers and for any integer $t$, we let $[t]=\{1,2,...,t\}$. We also let $\mathbb Z$ denote the set of all integers. For $m\in\NN$, we let $\|\cdot\|_m$ denote the $l_m$ norm , i.e. for $v\in\RR^m$, $\|v\|_m=(\sum_{i=1}^m |v_i|^m)^{1/m}$.
  If $v\in\NN^m$, we denote by $|v|_1$ the quantity $\sum_{i=1}^mv_i$. We let $B_{m}(0,K)$ denote the $l_2$-ball in $\RR^{m}$ centered at the origin with radius $K>0$. For two vectors $x,y\in\RR^{k}$, we let $\angle(x,y)$ denote the angle between $x$ and $y$.

  For any  probability measure $P$ and measurable function $f$, we denote by  $\|f\|_{P,k}$ the norm $\left(\int|f(x)|^k dP(x)\right)^{1/k}$. We will  also denote this norm by $L_k(P)$. Also,  $Pf$ will denote the integral $\int f dP$. For a concave function $f:\RR^k\mapsto\RR$, the domain  $\dom(f)$ will be defined as in \citep[][p. 74]{hiriart2004}, that is, $\dom(f)=\{x\in\RR^k\ :\ f(x)>-\infty\}$. 
For $f:\RR^2\mapsto \RR$, we denote by $f_{12}$ the partial derivative
\[f_{12}(x,y)=\pdv{f(x,y)}{x}{y}.\]
For any differentiable function $f:\RR^k\mapsto\RR$, $\grad f$ will denote the gradient of $f$, and the superlevel set of $f$ at level $c$ will be defined by  $\{x\in\RR^k: f(x)\geq c\}$.
For any $x\in\RR$, we denote by $\sigma(x)$ the ReLU activation function $x_+=\max(x,0)$.
 For any set $A$, use the notation $1[x\in A]$ to denote the event $\{x\in A\}$. Also, we denote by $\iint(A)$ the interior of the set $A$. The cardinality of $A$ will be denoted by $|A|$. Throughout this paper, we use the convention $\pm\infty\times 0=0$.
  In this paper, we will use $C$ and $c$ to denote generic constants which may vary from line to line. 
 
  Many  results in this paper are  asymptotic (in $n$) in nature and thus require some standard asymptotic notations.  If $a_n$ and $b_n$ are two sequences of real numbers then $a_n \gg b_n$ (and $a_n \ll b_n$) implies that ${a_n}/{b_n} \rightarrow \infty$ (and ${a_n}/{b_n} \rightarrow 0$) as $n \rightarrow \infty$, respectively. Similarly $a_n \gtrsim b_n$ (and $a_n \lesssim b_n$) implies that $\liminf_{n \rightarrow \infty} {{a_n}/{b_n}} = C$ for some $C \in (0,\infty]$ (and $\limsup_{n \rightarrow \infty} {{a_n}/{b_n}} =C$ for some $C \in [0,\infty)$). Alternatively, $a_n = o(b_n)$ will also imply $a_n \ll b_n$ and $a_n=O(b_n)$ will imply that $\limsup_{n \rightarrow \infty} \ a_n / b_n = C$ for some $C \in [0,\infty)$).

\section{Mathematical formalism}
\label{sec: DTR set-up}
We focus on the DTR estimation under a longitudinal setting where data are collected over time periods indexed by $t\in\{1,2\}$. Let $O_t\in\O_t \subset\RR^{p_t}$ denote the $p_t$ dimensional vector of patient clinical variables collected at time $t$ and $p=\max(p_1,p_2)$. At a given time $t$, a binary treatment decision $A_t\in\{\pm 1\}$ is made for the patient and a response to such treatment $Y_{t}\in\RR$ is observed.   Without loss of generality, we assume higher values of response $Y_{t}$ are desirable.  Let us denote the distribution underlying the observed random vector $\D = (O_1,A_1,Y_1,O_2,A_2,Y_2)$ by $\PP$. Suppose we sample $n$ i.i.d. observations from $\PP$. The corresponding empirical distribution function will be denoted by $\PP_n$.  Since treatment decisions are often made based on all previous states including prior treatments and responses, we define the patient history by
\[H_1=O_1,\mbox{ and }H_2=(O_1, Y_1, O_2, A_1),\]
where $H_1$ and $H_2$ take values in sets $\H_1$ and $\H_2$, respectively. 
We denote by $\pi_1(a_1\mid H_1)$ and $\pi_2(a_2\mid H_2)$  the propensity scores $\PP(A_1=a_1|H_1)$ and $\PP(A_2=a_2|H_2)$, respectively. 


Our goal is to find the treatment regime $d=(d_1,d_2): \H_1 \times \H_2 \mapsto \{\pm 1\}\times \{\pm 1\}$ that maximizes the expected sum of rewards $Y_1(d)+Y_2(d)$, $$V(d_1,d_2)=\E_d [Y_1(d)+Y_2(d)],$$ 
where $Y_t(d)$ is the potential outcome associated with time $t \in \{1,2\}$ , and $\E_d$ is the expectation with respect to the data distribution under regime $d$.  To this end, first we make some assumptions on the observed data distribution $\PP$ so that $V(d_1,d_2)$ becomes identifiable under $\PP$. 

\noindent \textbf{Assumptions for identifiability:}
\begin{itemize}
    \item[I.] \textbf{Positivity:} There exists a constant $C_{\pi}\in(0,1)$ so that $\pi_t(A_t \mid H_t)>C_\pi$ for all $H_t$, $t=1,2$.
    \item[II.]\textbf{Consistency:} The observed outcomes $Y_t$ and covariates $O_t$ agree with the potential outcomes and covariates  under the treatments actually received. See \cite{schulte2014, robins1994correcting} for more details.
    \item[III]\textbf{Sequential ignorability:} For each $t=1,2$, the treatment assignment $A_t$ is conditionally independent of the future potential outcomes $Y_{t}$ and future potential clinical profile $O_{t+1}$ given $H_t$. Here we take $O_3$ to be the empty set.
\end{itemize}
Our version of sequential ignorability follows from  \cite{robins1997causal, MurphySA2001MMMf}. Assumptions I-III are standard in DTR literature  \citep[e.g.]{schulte2014,sonabendw2021semisupervised, MurphySA2001MMMf, zhao2015}.  

Under Assumptions I-III,   $\E_d (Y_1+Y_2)$ can be identified under $\PP$ as \citep{zhao2015} 
\begin{align*}
\E_{d} [Y_1(d)+Y_2(d)]=\PP\lbt \dfrac{( Y_1+Y_2) 1[A_1=d_1(H_1)] 1[A_2=d_2(H_2)]}{ \pi_1(A_1\mid H_1)\pi_2(A_2\mid H_2)}  \rbt.
\end{align*}
The treatment effect contrasts are defined as follows:
\begin{equation}\label{def: treatment effect stage 1}
 \mathcal T_1(H_1)=    \E[Y_1+U_2^*(H_2)|A_1=1, H_1]-\E[Y_1+U_2^*(H_2)|A_1=-1, H_1],
 \end{equation}
 and
\begin{equation}\label{def: treatment effect stage 2}
 \mathcal T_2(H_2)=   \E[Y_1+Y_2|A_2=1,H_2]-\E[Y_1+Y_2|A_2=-1,H_2],
 \end{equation}
 where
 \begin{equation}\label{def: U 2 star}
    U_2^*(H_2)=\max_{a_2\in\{\pm 1\}}\E[Y_2| H_2,A_2=a_2].
\end{equation}
The above quantities are also called the optimal-blip to-zero function, or sometimes simply the blip function, in the literature \citep{Robins2004, schulte2014, luedtke2016}. We will also refer to them as the first  stage and the second stage conditional treatment effects. 
For the blip functions or the conditional treatment effects to be well defined, we need the conditional expectations in \eqref{def: treatment effect stage 1} and \eqref{def: treatment effect stage 2} to be finite, which is not automatically guaranteed by Assumptions I-III. Therefore we introduce another assumption to ensure that the treatment effects are well-defined.
\begin{itemize}
    \item \textbf{Assumption IV.}  For any $h_2\in\H_2$, and $a_2\in\{-1,1\}$, the conditional expectation $\E[|Y_1|+|Y_2|\mid H_2=h_2, A_2=a_2]<\infty$. For any $h_1\in\H_1$, and $a_1\in\{-1,1\}$, the conditional expectation $\E[Y_1+U_2^*(H_2)\mid H_1=h_1, A_1=a_1]<\infty$. Furthermore, $\E[|Y_1+Y_2|]<\infty$.
\end{itemize}
In addition to ensuring the well-definedness of treatment effects, Assumption IV also serves as a  technical requirement in our proofs and enhances the interpretability of our theoretical findings. While we expect that many of our theoretical results would hold even without this assumption, the proofs would become more intricate and cumbersome. It is important to note that Assumption IV is not overly stringent since, in most of our applications, $Y_1$ and $Y_2$ represent measurements and are automatically bounded.

{\color{black} We\label{page: d star def} define the optimal DTR $d^*$ to be the maximizer of $\E_d [Y_1(d)+Y_2(d)]$ over all possible regimes $d=(d_1,d_2)$ such that $d_1:\H_1\mapsto\{\pm 1\}$ and $d_2:\H_2\mapsto \{\pm 1\}$. } Under Assumptions I-III, the optimal policy $d^*$ can be identified as follows \citep{zhao2015,chakraborty2013}
\begin{align}\label{def: d1 and d2 star}
d_2^*(H_2)&=\argmax_{a_2\in\{\pm 1\}}\E[Y_2\mid  H_2,A_2=a_2]\nn\\
d_1^*(H_1)&=\argmax_{a_1\in\{\pm 1\}}\E\left[ Y_1+ U_2^*(H_2) \mid H_1, A_1=a_1\right],
\end{align}
 where $U_2^*$ is as defined in \eqref{def: U 2 star}. 
Since the optimal decision rules
remain unchanged if a constant $C$ is added to both $Y_1$ and $Y_2$, in what follows, unless otherwise mentioned, we assume that $Y_1,Y_2>C$ for some $C>0$. 
This trick was also used in \cite{zhao2015}.

\begin{remark}[Uniqueness of $d_1^*$ and $d_2^*$]
\label{remark: uniqueness of d1star and d2star}
It is worth noting that $d_1^*$ and $d_2^*$ defined in \eqref{def: d1 and d2 star} may not be unique because they are allowed to take any value in $\{\pm 1\}$ at the boundary. To elaborate on this further, suppose some $H_2$ satisfies
\[\E[Y_2| H_2,A_2=1]=\E[Y_2| H_2,A_2=-1].\]
Such values of $H_2$ constitute the decision boundary for the second stage.
Then both versions $d_2(H_2)=1$ and $d'_2(H_2)=-1$ qualify as  optimal rule for at  $H_2$. Similarly, for $d_1^*$, we can show that if $H_1$ belongs to the first stage decision boundary
\[\lbs h_1\in\mathcal H_1\ :\ \E\slbt Y_1+U_2^*( H_2)\bl A_1=1,h_1\srbt=\E\slbt Y_1+U_2^*( H_2)\bl A_1=-1,h_1\srbt\rbs,\]
then $d_1^*(H_1)$ can take either value $+1$ or $-1$. Thus, $d_1^*$ is not unique either.
Consequently, to avoid confusion, we let $d_1^*=1$ and $d_2^*=1$ at both first and second stage decision boundaries. Note that under this convention, $d_1^*(H_1)=1[T_1(H_1)\geq 0]$ and $d_2^*(H_2)=1[\mathcal T_2(H_2)\geq 0]$. In what follows, we  shall also refer to this optimal rule as ``the optimal rule".
\hfill\qedsymbol{}
\end{remark}

There is an alternative way of formulating $d^*$. 
If $(f_1^*,f_2^*)$
is a maximizer of
\begin{equation}\label{def: value function}
   V(f_1,f_2)=\PP\lbt \dfrac{( Y_1+Y_2 ) 1[A_1f_1(H_1)>0]1[A_2f_2(H_2)>0]}{ \pi_1(A_1\mid H_1)\pi_2(A_2\mid H_2)}  \rbt 
\end{equation}
over the class
\begin{equation}\label{def: class F}
\mathcal F=\lbs(f_1,f_2) \ \bl \ f_1:\mathcal{H}_1\mapsto\RR,\quad f_2:\mathcal{H}_2\mapsto\RR \text{ are measurable}\rbs,
\end{equation}
then $\sign(f_1^*)$ and $\sign(f_2^*)$ yield the optimal rules $d_1^*$ and $d_2^*$, respectively \citep{zhao2015}. If $f_1^*$ and $f_2^*$ take the value zero, then $d_1^*$ and $d_2^*$ can be either $+1$ or $-1$. Finally, even if $d_1^*$ and $d_2^*$ are unique,  $f_1^*$ and $f_2^*$ need not be unique.

At this stage, although it is intuitive to consider maximization of the sample analogue of $V(f_1,f_2)$ to estimate the optimal decision rule,  the non-concavity and discontinuity of the  zero-one loss function render the maximization of $V(f_1,f_2)$  computationally hard.  To deal with issues of similar flavor, the  classification literature \citep[cf.][]{bartlett2006} suggests using a  suitable surrogate to the zero-one loss function. We appeal to this very intuition and consider 
\begin{equation}\label{def: objective: ours}
V_{\psi}(f_1,f_2)=\PP \lbt\dfrac{(Y_1+Y_2)\psi\slb A_1 f_1(H_1), A_2f_2(H_2)\srb}{\pi_1(A_1\mid H_1)\pi_2(A_2\mid H_2)} \rbt,
\end{equation}
where $\psi$ is some bivariate function. For example, \cite{zhao2015} takes $\psi(x,y)=\min(x-1,y-1,0)$, the bivariate concave version of the popular hinge loss $\phi(x)=\max(1-x,0)$.

 Suppose there exist functions $f_1:\H_1\mapsto[-\infty,\infty]$ and $f_2:\H_2\mapsto[-\infty,\infty]$ so that 
 \begin{equation}\label{def: tilde f}
V_{\psi}(\tilde f_1,\tilde f_2)=\sup_{(f_1,f_2)\in\mathcal F}V_{\psi}(f_1,f_2)
\end{equation}
 where $\mathcal F$ is as defined in \eqref{def: class F}. Note that  $\tilde f_1$ and $\tilde f_2$  may not be  unique. 
Each $(\tilde f_1,\tilde f_2)$ lead to the decision rules $\tilde d_1(H_1)=\sign(\tilde f_1(H_1))$ and
$\tilde d_2(H_2)=\sign(\tilde f_2(H_2))$.  If $\tilde f(H_t)=0$, then $\tilde d_t(H_t)$ can be either $+1$ or $-1$. 
    We let $\tilde f_1$ and $\tilde f_2$  to be extended-valued functions because the supremum on the right hand side of \eqref{def: tilde f} may not be attained in $\F$ for some surrogates. It may happen that the supremum of $V_\psi$ over $\mathcal F$ is attained at some $ f_1$ and $ f_2$ which satisfies $f_1(H_1)=\infty$ or $-\infty$  (alternatively,   $ f_2(H_2)=\infty$ or $-\infty$).  Although $\tilde f_t$ can be extended valued, it does not create much technical issues because (a) $\tilde d_t$ is always  $1$ or $-1$ for $t=1,2$, and $\tilde d_t$ is the  object of interest here.

Finally, we define excess risk in line with the excess risk in context of classification. 
Letting $V^{*}=V(f_1^*,f_2^*)$
and $V^{*}_\psi=V_{\psi}(\tilde f_1,\tilde f_2)$,  
we define the respective regret and  $\psi$-regret of using $(f_1,f_2)$ by
\[V^{*}-V(f_1,f_2)\quad\text{and}\quad V^{*}_{\psi}-V_{\psi}(f_1,f_2),\]
respectively.
 Note that regret and the  $\psi$-regret are always non-negative. 
 
Throughout our paper, we will compare our DTR  classification with binary classification. Therefore, we  will fix the notation for  binary classification.
In the setting of binary classification, we have observations $X$ taking value in an Euclidean space $\mathcal X$. Each  $X$ is associated with  a label $A$, which plays the same role as our treatment assignments. The optimal rule or the  Bayes rule  assigns label $+1$ if $\eta(X)=P(A=1\mid X)>1/2$ and label $-1$ otherwise \citep[cf.][]{bartlett2006}. If $\eta(X)=1/2$, both labels are optimal.  The Bayes rule minimizes the classification risk $\R(f)=\P(Af(X)<0)$ over all measurable functions $f:\mathcal X\to\RR$. Also, we denote $\mathcal R^*=\inf_f \mathcal R(f)$, where the infimum is taken over all measurable functions.
Replacing the zero-one loss with the  surrogate $\phi$  results in the  $\phi$-risk
$\mathcal R_{\phi}(f)=\E[ \psi(Af(X))]$. We let $\R_{\phi}^{*}$ denote the optimized $\phi$-risk.


 Some parallels with the DTR classification setting are immediate. For example,   $V(f_1, f_2)$, $V^*$, $V_{\psi}(f_1, f_2)$, and $V^*_{\psi}$ correspond to $R(f)$, $R^*$, $R_{\phi}(f)$,  and $R^*_{\phi}$,  respectively. next, defining the maps $\G:\mathcal H_1\mapsto \RR$ and {\color{black} $\Gt:\mathcal H_2\mapsto\RR$ } by
  \begin{equation}\label{def: G}
     \G(H_1)=\frac{\E[Y_1+U_2^*(H_2)|A_1=1, H_1]}{\E[Y_1+U_2^*(H_2)|A_1=1, H_1]+\E[Y_1+U_2^*(H_2)|A_1=-1, H_1]},
 \end{equation}
\begin{equation}\label{def: Gt}
    \Gt(H_2)=\frac{\E[Y_1+Y_2|A_2=1,H_2]}{\E[Y_1+Y_2|A_2=1,H_2] +\E[Y_1+Y_2| A_2=-1,H_2]},
 \end{equation}
 we observe that
 $\G$ and $\Gt$ play the same role in DTR setting as the conditional probability  $\eta$ in context of binary  classification. To elaborate, from the definitions of $d_1^*$ and $d_2^*$ in \eqref{def: d1 and d2 star}, it follows that $d_t^*(H_t)=+1$ if $\eta_{t}(H_t)>1/2$, and $-1$ otherwise.
 Note also that  the first stage and second stage decision boundaries can be represented by the sets $\{h_1:\G(h_1)=1/2\}$ and $\{h_2:\Gt(h_2)=1/2\}$.

Throughout this paper, we occasionally make statements such as $\G(H_1)\geq 1/2$, $T(H_2,A_2)\geq 0$, $d_1^*(H_1)\neq \tilde d_1(H_1)$, etc. Since $H_1$, $H_2$, $A_1$, $A_2$, etc. are random variables, quantities like $\G(H_1)$, $\Gt(H_2)$, $T(H_2,A_2)$, $d_2^*(H_1)$, $d_1^*(H_1)$ are also random. To avoid any confusion, we wish to clarify that when such statements are made, it implies that the stated conditions hold for all realizations of $H_1$, $H_2$, $A_1$, $A_2$, etc.


 \section{Fisher consistency}
 \label{sec: concave}
A desirable  $\psi$ should ensure that $\tilde d$ is consistent with $d^*$. To concertize the idea, we need the concept of Fisher consistency.
\begin{definition}
\label{def: Fisher consistency}
The surrogate $\psi$ is called Fisher consistent if for all $\PP$ satisfying Assumption I-IV, any $\{f_{1n},f_{2n}\}_{n\geq 1}\subset \F$ that satisfies 
\[V_\psi(f_{1n},f_{2n})\to V_\psi^*,\quad\text{also satisfies}\quad V(f_{1n},f_{2n})\to V^*.\]
\end{definition}
 Our definition of Fisher consistency is in line with classification literature \citep{bartlett2006}. Note that Definition~\ref{def: Fisher consistency} does not require $\tilde f_1$ and $\tilde f_2$ to exist or be measurable. However, if $\tilde f_1$ and $\tilde f_2$ do exist, and they are in $\F$, {\color{black} then\label{page: FC def} Fisher consistency implies $V(\tilde d)=V^*$, indicating $\tilde d$ is a candidate for $d^*$.}
In context of binary classification, the surrogate $\phi$ is Fisher consistent if and only if $\mathcal R_\phi(f_n)\to \mathcal R_\phi^*$ implies $\mathcal R(f_n)\to \R^*$, where  $f_n$'s are measurable functions mapping $\mathcal X$ to $\RR$.

\begin{remark}[Characterization of Fisher consistency]
 In many classification problems, e.g. binary, multicategory, or multi-label classification, Fisher consistency can be directly characterized by  convex hulls of points in the image space of $\psi$, and the related notion is known as calibration \citep{bartlett2006, zhang2010, tewari2007, gao2011}. For example, Theorem 1 of \cite{bartlett2006} shows that a surrogate $\phi$ is Fisher-consistent for binary classification if and only if the following condition holds.
 \begin{condition}\label{cond: binary classification}
 $\phi:\RR\mapsto\RR$ satisfies
 \[ \sup_{x: x(2\eta-1)\leq 0}\slb \eta\phi(x)+(1-\eta)\phi(-x)\srb< \sup_{x\in\RR}\slb \eta\phi(x)+(1-\eta)\phi(-x)\srb\]
 for all $\eta\in[0,1]$ such that $\eta\neq 1/2$.
 \end{condition}
 However, due to the sequential nature of the DTR set-up,  it is not easy to represent Fisher consistency in terms of analytical properties of $\psi$. This complicates the analysis of Fisher consistency in the DTR set-up.
 \hfill\qedsymbol
\end{remark}

Traditionally, the first preference  of surrogate losses have been the concave (convex in context of minimization) surrogates because they ensure unique optimum \citep{chen2017}. 
In the binary setting, a univariate concave surrogate $\phi$ is Fisher consistent if and only if it is differentiable at $0$ with positive derivative \cite[see][Theorem 6]{bartlett2006}.   Many commonly used univariate concave losses satisfy these conditions. We display some of these in Figure \ref{Figure: 1d functions}. 
An important geometric property of these functions is that they  mimic the graph of the zero-one loss function. After proper shifting and scaling, their image lies below that of the zero-one loss function (see Figure~\ref{Figure: 1d functions}). 
Of course, concavity is not necessary for classification-calibration, and this geometric property is shared by non-concave classification calibrated losses as well \citep[see Lemma 9 of][]{bartlett2006}.
\begin{figure}
\includegraphics[width=.5\textwidth]{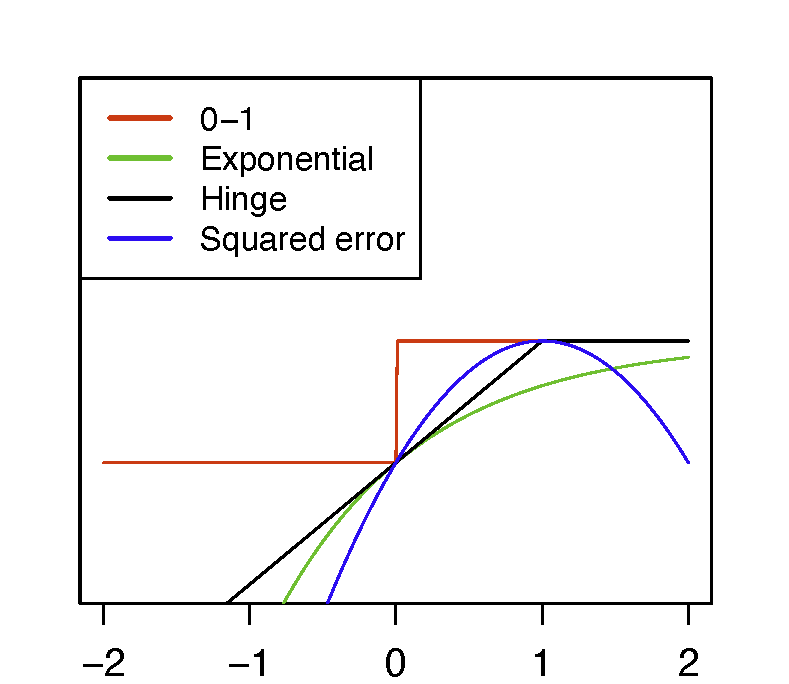}
\caption{Plots of $\phi(x)$ vs $x$ for concave calibrated value function $\phi$ for binary decision rules. Here are the functions, Zero-one: $\phi(x)=1[x>0]$,  Exponential: $\phi(x)=1-e^{-x}$, Hinge: $\phi(x)=\min(x,1)$, Squared error: $\phi(x)=1-(1-x)^2$.}
\label{Figure: 1d functions}
\end{figure}

There are also classes of concave surrogates which are Fisher consistent for multicategory classification with respect to the zero-one loss \citep{duchi2018, tewari2007, neykov2016} or for multilabel classification with respect to Hamming loss \citep{gao2011}. In that light, it is not unnatural to expect concave surrogates will succeed in the DTR classification setting as well. Unfortunately, as we will see in the next section, this simple-minded extension of binary classification may not hold.

 DTR classification bears resemblance with  multilabel classification  \citep{dembczynski2012}  but additional complication arises since $H_2$ contains $H_1$. Also, the Fisher consistency literature on multilabel classification \citep{gao2011} is based on Hamming loss and partial ranking loss, which are substantially different from the zero-one loss.  Our problem also exhibits similarity with multiclass classification \citep{duchi2018}. However, a big difference arises because of the sequential structure. Had $d_1$ been a map from $\H_2$ to $\{\pm 1\}$ similar to $d_2$, 
existing theory on multiclass classification  \citep{duchi2018} could be readily used to provide conditions for a general function $\psi$ to be Fisher consistent. However, during the treatment assignment $d_1$, one has no knowledge of  $A_1$, $Y_1$, and $O_2$. \cite{tewari2007} and \cite{zhang2010} develop tools for  general classification set-ups, but these  tools are too generalized for explosion of the specific sequential structure of DTR classification. In fact, it is the binary classification, which seems to have the most parallels with DTR classification.

 \begin{figure}[h]
  \begin{subfigure}[b]{.3\textwidth}
     \includegraphics[width=\textwidth]{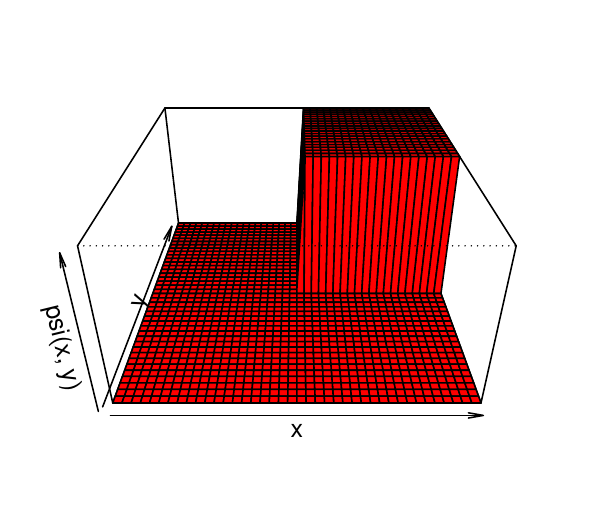}
 \caption{Bivariate zero-one loss:  $1[x>0, y>0]$.}
 \label{Fig: biv 0-1}
 \end{subfigure}~
 \begin{subfigure}[b]{.3\textwidth}
     \includegraphics[width=\textwidth]{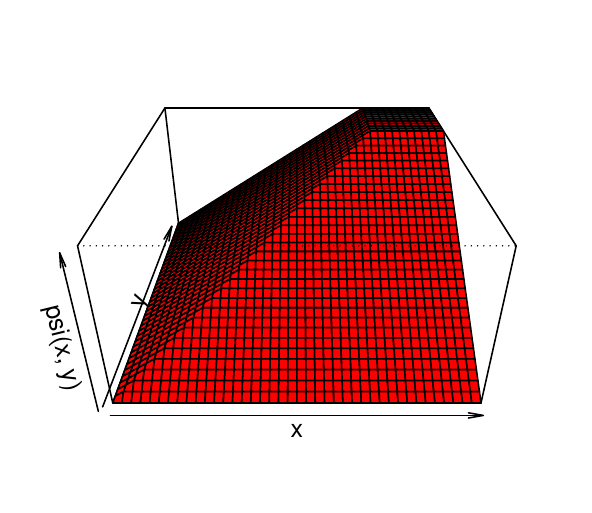}
 \caption{Bivariate hinge loss:   $\max(x-1,y-1,0)$.}
 \label{Fig: biv hinge}
 \end{subfigure}\\
 \begin{subfigure}[b]{.3\textwidth}
     \includegraphics[width=\textwidth]{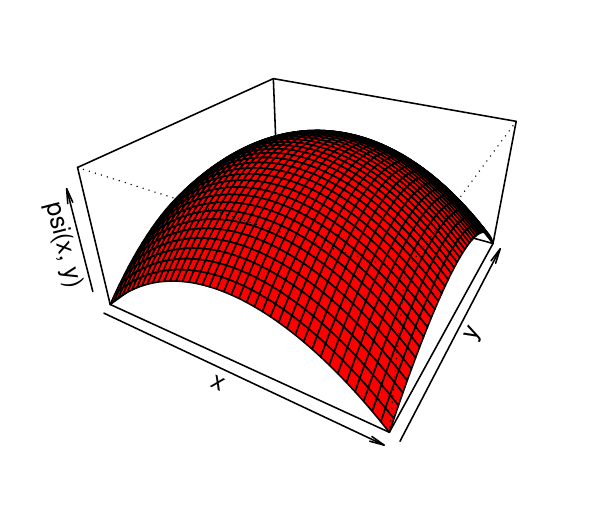}
 \caption{Bivariate squared error loss:  $-(x^2+y^2)$.}
 \label{Fig: sq error}
 \end{subfigure}~
 \begin{subfigure}[b]{.3\textwidth}
     \includegraphics[width=\textwidth]{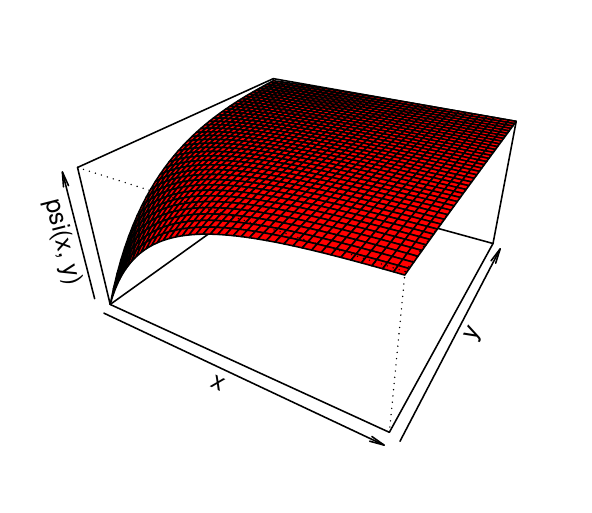}
 \caption{Bivariate exponential loss:  $-\exp(-(x+y))$.}
 \label{Fig: biv exp}
 \end{subfigure}
 ~
  \begin{subfigure}[b]{.3\textwidth}
     \includegraphics[width=\textwidth]{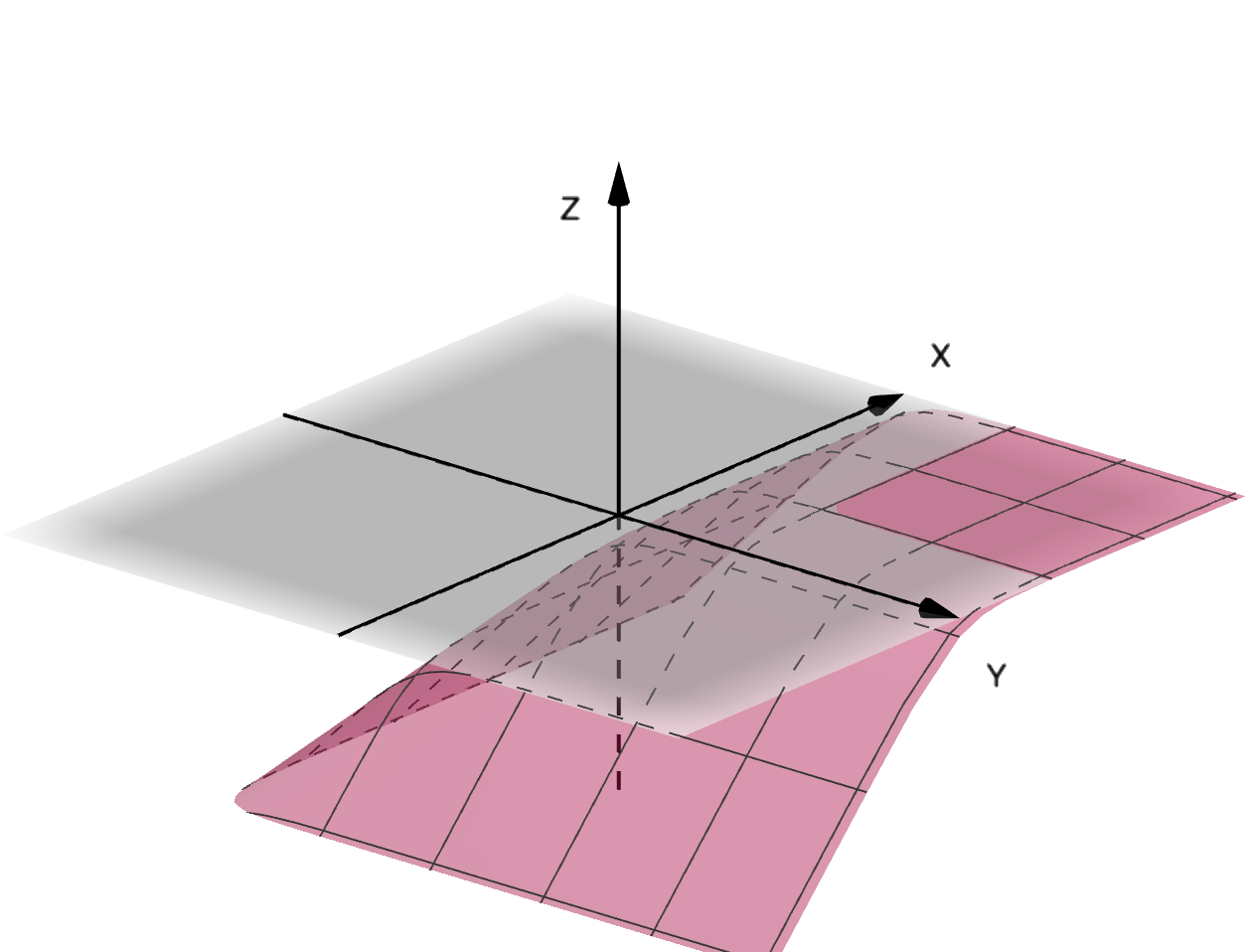}
 \caption{Bivariate logistic loss:  $-\log(1+\exp(-x)+\exp(-y))$.}
 \label{Fig: biv logistic}
 \end{subfigure}
 \caption{Bivariate zero-one loss function and some concave surrogate losses}\label{Fig: bivariate Loss functions}
 \end{figure}

 \subsection{Concave surrogates}
 \label{sec: failure of smooth concave}
In this section, we will establish that a large class of concave surrogates fail to be Fisher consistent for DTR estimation. 
We first consider the case of smooth concave losses because they amend to gradient based optimization methods with good scalability properties. 
We will start our discussion with an example. The smooth concave function $\phi(x)=-\exp(-x)$ is Fisher consistent in the binary classification setting. Let us consider its bivariate extension $\psi(x,y)=-\exp(-x-y)$. It tuns out that $\tilde d_1(H_1)$ takes the form 
\begin{align}
\label{eq: prelude to surrogate}  
\argmax_{a_1\in\{\pm 1\}}\E\slbt h\slb Y_1+\E[Y_2\mid H_2,A_2=1],Y_1+\E[Y_2\mid H_2,A_2=-1\srb\ \Big |\ H_1, A_1=a_1\srbt,
\end{align}
where $h(x,y)=\sqrt{xy}$. However, $d_1^*(H_1)$ takes the same form but with $h(x,y)=\max(x,y)$. In general, therefore, $\tilde d_1(H_1)$ and $d_1^*(H_1)$ do not agree. To see this, consider the toy example when $Y_1=1$ and 
\begin{align}
\label{def: easy example}
    Y_2=&\ 4\times 1[A_1,A_2=1]+3\times 1[A_1=1,A_2=-1]\nn\\
    &\ +5\times 1[A_1=-1,A_2=1]+1[A_1,A_2=-1].
\end{align}
In this case, $d_1^*(H_1)=-1$ but $\tilde d_1(H_1)=1$ for all $H_1$, and  clearly, $\psi$ is not Fisher consistent. If we consider other examples of smooth concave $\psi$, e.g. logistic or quadratic loss, we obtain different $h$, but for these examples as well, $h$ is  quite different from the non-smooth $h(x,y)=\max(x,y)$. 

The above heuristics indicate  that the criteria of DTR Fisher consistency may be incompatible with  smooth concave losses. 
Theorem~\ref{thm: concave: main} below concretize the above heuristics for an important class of concave smooth losses.
Theorem~\ref{thm: concave: main} assumes that $\psi$ is closed and strictly concave. We say a  function is closed if it is upper semicontinuous everywhere, or equivalently, if its superlevel sets are closed \citep[pp. 78,][]{hiriart2004}. The function  
$h$ is strictly concave if for any $\lambda\in(0,1)$, and $x,y\in\dom(h)$,
 \[h(\lambda x+(1-\lambda)y)>\lambda h(x)+(1-\lambda)h(y).\]
 \begin{theorem}\label{thm: concave: main}
  Suppose  $\psi$ is closed, strictly concave, and bounded above. In addition, $\psi$ has continuous second order partial derivatives and $\psi_{12}$ has continuous partial derivatives on $\iint(\dom(\psi))$. 
   Then $\psi$ can not be Fisher consistent for two stage DTR.
  \end{theorem}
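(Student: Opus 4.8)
The plan is to reduce the problem to a one–dimensional comparison at the first stage and then exploit a smoothness mismatch. First I would expand $V_\psi(f_1,f_2)$ by iterated expectations. Writing $p=p(H_2)=\E[Y_1+Y_2\mid H_2,A_2=1]$ and $q=q(H_2)=\E[Y_1+Y_2\mid H_2,A_2=-1]$ (both positive, since $Y_1,Y_2>C>0$), sequential ignorability and positivity let the inverse–propensity weights integrate out, so $V_\psi$ reduces to an average over $H_1$ of $\sum_{a_1}\E\big[p\,\psi(a_1 f_1(H_1),f_2(H_2))+q\,\psi(a_1 f_1(H_1),-f_2(H_2))\mid H_1,A_1=a_1\big]$. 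Since $f_2$ is an unconstrained measurable function of $H_2$, it can be optimized pointwise, which leads me to define the inner value
\[W(s;p,q)=\max_{v\in\RR}\big(p\,\psi(s,v)+q\,\psi(s,-v)\big),\]
a function that is concave in $s$ and homogeneous of degree one in $(p,q)$. Substituting the optimal $f_2$ reduces the first-stage problem at a fixed $H_1$ to maximizing the concave function $g(u)=\E[W(u;p,q)\mid A_1=1]+\E[W(-u;p,q)\mid A_1=-1]$. For concave $g$ the sign of the maximizer equals $\sign(g'(0))$, and the envelope theorem gives $g'(0)=\E[h(p,q)\mid A_1=1]-\E[h(p,q)\mid A_1=-1]$, where $h(p,q):=\partial_s W(0;p,q)=p\,\psi_{1}(0,v^\star)+q\,\psi_{1}(0,-v^\star)$ and $v^\star=v^\star(0;p,q)$ is the inner maximizer. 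In contrast, since $Y_1+U_2^*(H_2)=\max(p,q)$, the optimal rule satisfies $d_1^*=\sign(\E[\max(p,q)\mid A_1=1]-\E[\max(p,q)\mid A_1=-1])$. Thus the surrogate uses the aggregator $h$ where the truth uses $\max$; for $\psi(x,y)=-\exp(-x-y)$ one computes $h(p,q)\propto\sqrt{pq}$, recovering the heuristic in the text.

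Next I would show that Fisher consistency forces $h=\alpha\max$ for some $\alpha>0$. Taking $H_1$ degenerate and $\pi_1=\pi_2=1/2$, I can construct a distribution satisfying Assumptions I--III in which the conditional law of $(p,q)$ given $A_1=1$ (resp.\ $A_1=-1$) is any prescribed finitely supported $\mu$ (resp.\ $\nu$) on $\{p,q>0\}$; it suffices to take $Y_1$ constant and let $Y_2$ depend on $(H_2,A_2)$ so as to realize the desired $(p,q)$. On such a distribution the surrogate-optimal first-stage direction is $\sign(\E_\mu[h]-\E_\nu[h])$ while the truly optimal one is $\sign(\E_\mu[\max]-\E_\nu[\max])$; if these disagree, a maximizing sequence of $V_\psi$ chooses the wrong first-stage action, so $V\not\to V^*$ and consistency fails. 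Hence consistency requires $\E_\mu[\max]>\E_\nu[\max]\Rightarrow\E_\mu[h]>\E_\nu[h]$ for all such $\mu,\nu$. Viewing $\mu\mapsto\E_\mu[h]$ and $\mu\mapsto\E_\mu[\max]$ as linear functionals on zero–total–mass signed measures, this one-sided sign condition forces the two functionals to be proportional on that space (if $\E_\rho[\max]=0$ then, comparing with a fixed $\rho_1$ having $\E_{\rho_1}[\max]>0$ and letting the coefficient of $\rho$ tend to $\pm\infty$, one gets $\E_\rho[h]=0$). Consequently $h-\alpha\max$ is constant on $\{p,q>0\}$ for some $\alpha>0$, and homogeneity of degree one of $h$ removes the additive constant, leaving $h=\alpha\max$.

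The contradiction then comes from regularity of $h$. At $s=0$ and $p=q$, the first-order condition $p\,\psi_2(0,v)-q\,\psi_2(0,-v)=0$ reduces to $\psi_2(0,v)=\psi_2(0,-v)$, which by strict concavity (strict monotonicity of $\psi_2(0,\cdot)$) has the unique interior solution $v^\star=0$. Strict concavity makes the second-order quantity $p\,\psi_{22}(0,v^\star)+q\,\psi_{22}(0,-v^\star)$ strictly negative, so the implicit function theorem, together with the assumed continuity of the second-order partials of $\psi$ and of the partials of $\psi_{12}$, shows that $v^\star(0;p,q)$, and hence $h(p,q)=p\,\psi_1(0,v^\star)+q\,\psi_1(0,-v^\star)$, is continuously differentiable in a neighborhood of any diagonal point $(p_0,p_0)$ with $p_0>0$. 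But $\alpha\max(p,q)$ is not differentiable on the diagonal: its one-sided directional derivatives along $(1,-1)$ and $(-1,1)$ are both positive, whereas those of a differentiable function would be opposite in sign. This contradicts $h=\alpha\max$, so no such $\psi$ can be Fisher consistent.

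I expect the main obstacle to be the regularity step: rigorously establishing that the inner value $W(s;p,q)$ is smooth enough that $h=\partial_s W(0;\cdot)$ is $C^1$ across the diagonal. This is exactly where the hypotheses are used—strict concavity guarantees a unique interior maximizer with a nondegenerate second-order condition for the implicit function theorem, and the continuity of the second derivatives (and of the partials of $\psi_{12}$) propagates this smoothness to $h$. Two further points require care: confirming that the analysis point $(0,0)$ lies in $\iint(\dom(\psi))$ so that $v^\star=0$ is genuinely interior (which holds for the standard smooth surrogates, whose domain is all of $\RR^2$), and handling existence and measurability in the reduction—the argument needs only maximizing sequences rather than attained maximizers, so the non-attainment possible for an above-bounded $\psi$ is harmless.
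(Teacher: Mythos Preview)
Your approach is correct and genuinely different from the paper's. Both proofs construct pathological distributions with degenerate $H_1$, reduce the first-stage problem to maximizing a univariate concave function, and exploit that the surrogate picks the wrong sign at stage one. The divergence is in how the contradiction is extracted.

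The paper restricts to distributions where $\bH=(H_1,Y_1,O_2)$ is a deterministic function of $H_1$---equivalently, your $\mu,\nu$ are point masses, so the problem is parameterized by a vector $\tau\in\RR^4_+$. It then derives a necessary inequality (its Corollary~\ref{corollary: concave}) that $\psi$ must satisfy when $\tau_1>\tau_2$ and $\tau_3=\tau_4$, and violates it via a second-order Taylor expansion at $\tau$ near $(1,1,1,1)$, after first proving continuity of $\tau\mapsto\myth(\tau)$ at that point. This is where the extra hypothesis that $\psi_{12}$ has continuous partials enters: the paper needs $\psi_{122}$ to control the quadratic remainder in the expansion of $\psi_1(0,y^*)$.

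Your argument instead lets $\mu,\nu$ range over all finitely supported probability measures and uses this freedom to force $h=\alpha\max$ exactly: the image of $\mu\mapsto(\int\max\,d\mu,\int h\,d\mu)$ is convex, and the implication $\E_\mu[\max]>\E_\nu[\max]\Rightarrow\E_\mu[h]>\E_\nu[h]$ precludes interior, so it lies on a line with positive slope. The contradiction then comes from a single implicit-function calculation showing $h$ is $C^1$ across the diagonal while $\alpha\max$ is not. This is more conceptual and, as written, appears to need only continuity of the second partials of $\psi$, not of $\psi_{122}$---so it may yield a slightly stronger statement than the paper claims.

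Two points you flag do need care. First, $(0,0)\in\iint(\dom\psi)$ is not automatic and requires its own argument; the paper handles it separately (its Lemma~\ref{lemma: concave: dom 0}) by a direct domain-geometry argument showing such $\psi$ already fails Fisher consistency. Second, existence of the inner maximizer $v^\star$ (so that Danskin applies and $g'(0)$ is well-defined) is not free: it follows from $\psi$ bounded above plus a recession-function computation, which the paper carries out (its Lemma~\ref{lemma: concave: existence of maxima}) and which you will need to import or reprove for your one-dimensional inner problem.
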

 We list below some examples of $\psi$, also shown in Figure~\ref{Fig: bivariate Loss functions}, which satisfy the assumptions of Theorem~\ref{thm: concave: main}. 
 \begin{alignat*}{2}
     & \mbox{\bf Exponential:} & \quad & \psi(x,y)=-\exp(-x-y); \\
     & \mbox{\bf Logistic:} & \quad & \psi(x,y)=-\log(1+e^{-x}+e^{-y}); \\
     & \mbox{\bf Quadratic:} & \quad & \psi(x,y)=z^TQz+b^Tz+c, \mbox{where $z=(x,y)^T$,}\\
     & & & \mbox{ $Q$ is negative definite, $b\in\RR^2$, and $c\in\RR$.}
 \end{alignat*}
  The proof of Theorem~\ref{thm: concave: main} is given in Supplement~\ref{sec: proof: concave}.  Our counterexample for Theorem~\ref{thm: concave: main} is  based on a pathological case where $O_2$ and $Y_1$ are deterministic functions of $H_1$. We chose this case 
  because it grants technical simplification. The  realistic cases are no more likely to yield under concave surrogates than this simple pathological case. The calculations underlying the proof of Theorem~\ref{thm: concave: main} become severely technically challenging when the second stage covariates are potentially random given $H_1$.

\begin{remark}[Main challenges in the proof of Theorem~\ref{thm: concave: main}]
 A main difficulty in proving Theorem~\ref{thm: concave: main} is that even under our pathological case, $\tilde f_1(H_1)$ and $\tilde f_2(H_2)$ do not have closed form expressions. They are implicitly defined as maximizers of complex functionals of $\psi$. Therefore, if we consider a very large class of $\psi$'s,  characterization of $\tilde f_1(H_1)$ and $\tilde f_2(H_2)$ becomes difficult.
The assumptions on $\psi$ ensure that the class of $\psi$'s under consideration is manageable, mitigating some technical difficulties in the 
characterization of $\tilde f_1(H_1)$ and $\tilde f_2(H_2)$. The latter is essential for learning the behaviour of the signs of $\tilde f_1(H_1)$ and $\tilde f_2(H_2)$. Thus the assumptions  on $\psi$ are required for  technical reasons in the proof. That is to say that our conditions on $\psi$ are probably not necessary, and the assertions of Theorem~\ref{thm: concave: main} may hold even without these assumptions. In fact, we are not aware of any concave surrogates that are Fisher consistent in this context. We defer further discussion on the  assumptions in Theorem~\ref{thm: concave: main} to Supplement~\ref{sec: assumptions of concave}. \hfill\qedsymbol
  \end{remark}

{\color{black} \label{page: smoothness assumption}The smoothness assumption in Theorem~\ref{thm: concave: main} is a technical assumption. Specifically, the existence of a gradient of $\psi$ makes the proof simpler. However, we believe that the result may continue to hold without this condition, albeit with a more technically involved proof. In particular,  the negative result  in Theorem~\ref{thm: concave: main} is unlikely to be an artifact of the smoothness of $\psi$ in Theorem~\ref{thm: concave: main}, and may hold for broader classes of concave functions. In support of this claim, in Section  \ref{sec: hinge loss}, we demonstrate  that a concave variant of the bivariate hinge loss $\min(x-1, y-1, 0)$, a commonly used non-smooth concave loss, is not Fisher consistent. In fact, to our knowledge, there exists no concave surrogate, whether smooth or not, that is Fisher consistent for the DTR classification problem. These observations lead us to suspect that no concave loss is Fisher consistent for the DTR problem.} 

While we do not have an intuitive explanation for the apparent failure of  concave functions, 
 we attempt at making one heuristic reasoning. Even in the one stage case of binary classification, it was observed that  Fisher consistency requires the surrogates to mimic the shape of the zero-one loss to some extent.
It appears to us that for Fisher consistency in two stage DTR, the function $\psi$ has to mimic the shape of the bivariate zero-one loss function (see Figure~\ref{Fig: biv 0-1}) more closely than that was necessary in  binary classification (see Figure~\ref{Figure: 1d functions}). In other words, the non-concavity of the zero-one loss function at the origin  pushes the concave losses to failure, thereby necessitating search for $\psi$ among non-concave losses, which we will study in Section \ref{sec: fisher consistent surrogate}.

Smooth concave or convex surrogates  fail to be Fisher consistent in many other complex machine-learning problems.
 For example, \cite{gao2011} shows known convex surrogates are not Fisher consistent  for multilabel classification with ranking loss. 
Ranking is another notable example, where convex losses fail for a number of losses including the pairwise disjoint loss \citep{calauzenes2012}. In fact, in the latter case, the existence of a Fisher consistent concave surrogate  would imply that the feedback arc-set problem  ispolynomial-time solvable \citep{duchi2010}, which is conjectured to be NP complete \citep{karp1972}.  The DTR classification problem shares one common feature with 
  the above-stated  machine-learning problems where these surrogate losses fail. It does not \emph{organically} reduce to a sequence of weighted binary classification problems, which appears to be a common element  of all  classification problems that are solvable via convex surrogates, e.g. multicategory loss with zero-one loss function \citep{tewari2007}, multilabel classification with partial ranking and hamming loss \citep{gao2011}, ranking with Hamming loss \citep{calauzenes2012}, ordinal regression with absolute error loss \citep{pedregosa2017} etc. Here we emphasize the word ``organic" because DTR classification does reduce to sequences of binary classification if it is framed as a sequential classification via  exclusion of data points at each stage; cf. BOWL  \citep{zhao2015}.
 
 \subsection{Hinge loss}
 \label{sec: hinge loss}
{\color{black} In this section, we demonstrate the Fisher inconsistency of the non-smooth loss function $\psi(x,y)=\min(x,y,1)$, which is a bivariate version of the univariate hinge loss $\min(x,1)$.   The Fisher inconsistency of the hinge loss provides support to the conjecture that the Fisher inconsistency of concave surrogates extends beyond the class of smooth losses. The specific form of the hinge loss we examine has also been explored by \cite{zhao2015} as well. See Figure \ref{Fig: biv hinge} for a pictorial representation of this loss.   If desired, readers may choose to bypass this section and proceed directly to Section \ref{sec: fisher consistent surrogate}, which focuses on the study of Fisher-consistent losses.

 \cite{zhao2015} suggested a location transformation of the outcomes  $Y_1$ and $Y_2$ so that they become positive, which is in alignment with our discussion in Section \ref{sec: DTR set-up}. Since we mainly focus on their implementation of the hinge loss, we will take $Y_1$ and $Y_2$ to be positive for the time being.
     For our hinge loss, it turns out that we can especially characterize the solution $\tilde d$. The following inequality will be crucial for understanding the form of $\tilde d$ in this case: 
\begin{align}
 \label{inlemma: hinge solution first stage requirement}
 & \abs{\E[T(H_2,d_2^*(H_2))\mid H_1=h_1,A_1=1]-\E[T(H_2,d_2^*(H_2))\mid H_1=h_1,A_1=-1] }\nn\\
> &\ \E[T(H_2,-d_2^*(H_2))\mid H_1=h_1,A_1=1] +\E[T(H_2,-d_2^*(H_2))\mid H_1=h_1,A_1=-1],
\end{align}
where we remind the readers that $T(H_2, a_2)=Y_1+ \E[Y_2\mid H_2, A_2=a_2]$. Note that the left-hand side of \eqref{inlemma: hinge solution first stage requirement} is the absolute value of the first stage blip function or conditional treatment effect defined in \eqref{def: treatment effect stage 1}. Thus \eqref{inlemma: hinge solution first stage requirement} can be interpreted as a lower bound condition, indicating the minimum strength required for the first stage conditional treatment effect. Further implications of \eqref{inlemma: hinge solution first stage requirement} will be discussed after introducing Theorem \ref{theorem: hinge}, which demonstrates the necessity of  \eqref{inlemma: hinge solution first stage requirement} for the uniqueness of $\tilde d_1(H_1)$.
\begin{theorem}[$\tilde d_1$ and $\tilde d_2$ for hinge loss]
 \label{theorem: hinge}
  Suppose $\psi(x,y)=\min(x,y,1)$. Further, suppose Assumptions I-IV hold and  $Y_1$ and $Y_2$ are bounded below by some positive constant.

 \begin{itemize}
     \item \textbf{First stage:} If \eqref{inlemma: hinge solution first stage requirement} holds for some $h_1\in\H_1$, then $\tilde d_1(h_1)=d_1^*(h_1)$.  If \eqref{inlemma: hinge solution first stage requirement} does not hold, then $\tilde d_1(H_1)=\{1,-1\}$. 
     \item\textbf{Second stage:}
 If $h_2\equiv (h_1,a_1,y_1,o_2)\in\H_2$ is such that $a_1$ and $h_1$ satisfy $a_1=\tilde d_1(h_1)$, then $\tilde d_2(h_2)=d_2^*(h_2)$. For all other $h_2$,  $\tilde d_2(h_2)=\{-1,1\}$.
 \end{itemize}
\end{theorem}

 
  Theorem \ref{theorem: hinge} is proved in Supplement \ref{sup: hinge}. Its proof is based on straightforward algebra and elementary convex analysis results. The first observation from Theorem \ref{theorem: hinge} is that 
  the condition for $d_2^*(h_2)=\tilde d_2(h_2)$ is actually not restrictive. If the first stage treatment allocation follows  $\tilde d_1$, then $A_1=\tilde d_1(H_1)$, and hence  $\tilde d_2(H_2)$ matches with $d^*_2(H_2)$.  However,  the first stage appears to be more challenging for the hinge loss because when \eqref{inlemma: hinge solution first stage requirement} fails to hold, this loss is unable to discriminate between the two treatment strategies in the first stage. If $d_1^*(H_1)$ is unique, then $d_1^*$ and $\tilde d_1$ will disagree in such situations. 
As a trivial example of such a scenario, consider the illustration in \eqref{def: easy example}.  \label{page: Theorem 2 patho}
    In this case,  the absolute value of the first stage conditional treatment effect is five  but the threshold in the right hand side of \eqref{inlemma: hinge solution first stage requirement} is eight for all $H_1$. Thus   $d_1^*(H_1)$ is unique, and it is always $-1$ but \eqref{inlemma: hinge solution first stage requirement} does not hold in this example, thereby confirming Fisher inconsistency.   We provide more examples of the failure of  \eqref{inlemma: hinge solution first stage requirement} in Supplement \ref{sec: strange conditions}.  Given that \eqref{inlemma: hinge solution first stage requirement} represents a minimal strength condition for the first-stage conditional treatment effect, the above discussion indicates that the hinge loss requires a sufficiently strong first-stage conditional treatment effect to accurately identify the first-stage optimal treatment. 
\label{page: hinge loss statiic} Similar to many other concave losses, the univariate version of hinge loss is Fisher consistent for the single-stage problem \citep[][see also Figure \ref{Figure: 1d functions}]{zhao2012}. However, Fisher inconsistency of Hinge loss has been observed in  some classification problems involving more than two classes (see \cite{liu2007} for a detailed account). Hinge loss is also not Fisher consistent for maximum score estimation problem in linear binary response model  \citep{feng2022nonregular}. In our case, the inconsistency stems from the first-stage treatment assignment, which aligns with the previous examples of concave losses in Section \ref{sec: failure of smooth concave}. This happens because the final-stage (in our case the second-stage) treatment assignment in DTR resembles a single-stage weighted classification problem, where concave surrogates work. The inherent difficulty of DTR  manifests in the treatment assignments of the early stages. This is unsurprising because the early-stage treatment assignments need to take into account the  potential outcomes of all future stages.

Some additional remarks are pertinent concerning the location transformation employed to ensure the positivity of outcomes because the location transformation makes it more challenging to satisfy \eqref{inlemma: hinge solution first stage requirement}. To see this, consider a hypothetical situation where \eqref{inlemma: hinge solution first stage requirement} holds at $H_1=h_1$ for some data distribution. If we perform a location shift by transforming $Y_1$ to $Y_1+C$ and $Y_2$ to $Y_2+C$, the left-hand side of \eqref{inlemma: hinge solution first stage requirement} increases by $C$, while the right-hand side grows by $3C$. Therefore, if $C$ is large enough,  \eqref{inlemma: hinge solution first stage requirement}  will 
 no longer hold for the location-transformed data.  Given the positivity of $Y_1$ and $Y_2$ does not ensure Fisher consistency anyway,  one may question the form of $\tilde{d}$  when $Y_1$ and $Y_2$ are allowed to take non-positive values. We delve deeper into this topic  in Supplement \ref{sup: hinge}.


\begin{remark}
 \label{remark: SOWL}
  As previously mentioned, the SOWL method proposed by \cite{zhao2015} is based on  the bivariate hinge loss described in Theorem~\ref{theorem: hinge}. In their paper, it was claimed that the hinge loss always leads to $\tilde{d}=d^*$. Our analysis demonstrates that the agreement between $\tilde{d}$ and $d^*$ relies on the fulfilment of \eqref{inlemma: hinge solution first stage requirement} when $d^*_1(H_1)$ is unique. There exist distributions where \eqref{inlemma: hinge solution first stage requirement} holds for all $h_1\in\H_1$, resulting in  $\tilde{d}=d^*$, while other distributions violate \eqref{inlemma: hinge solution first stage requirement} for some $h_1\in\H_1$. From a high level, this condition requires the first stage conditional treatment effect to be larger than some threshold.  For specific examples and further elaboration, refer to Supplement \ref{sec: strange conditions}.
\end{remark}

}
 \subsection{Construction of Fisher consistent surrogates}
 \label{sec: fisher consistent surrogate}
 In this section, we construct  Fisher consistent loss functions for two stage DTR classification.
 Noting the connection between binary classification and DTR classification, we consider bivariate loss functions of form $\psi(x,y)=\phi_1(x)\phi_2(y)$ where $\phi_1$ and $\phi_2$ themselves are univariate loss functions. The most intuitive choice of $\phi_i$'s would be the Fisher consistent losses for  one stage DTR. However,  $\phi_i(x)=-\exp(-x)$ is Fisher consistent in one stage \citep{bartlett2006, chen2017} although the product $\phi_1(x)\phi_2(y)$ is inconsistent for the two stage setting (see Section~\ref{sec: concave}). The above indicates that $\phi_i$'s Fisher consistency is insufficient for $\psi$ to mimic the bivariate zero-one loss function effectively.
 
   In fact, our calculations hint that
 $\phi_2$ needs to share  a particular property of the zero-one loss function, that is for some constant $C > 0$, 
 \begin{align}
     \label{cond: sigmoid type}
    \sup_{x\in\RR}\slb \eta\phi_2(x)+(1-\eta)\phi_2(-x)\srb=C\max(\eta,1-\eta) .
 \end{align}
  The above property is satisfied by the sigmoid function, which is non-concave, and Fisher consistent for binary classification \citep{bartlett2006}.  
  Interestingly, \eqref{cond: sigmoid type} alone does not guarantee the fisher consistency of $\psi=\phi_1\phi_2$. \textcolor{black}{For instance, the  loss $\phi(x)=\min(x+1,1)$ satisfies \eqref{cond: sigmoid type} with $C=2$ \citep[cf.][]{bartlett2006} but $\psi(x,y)=\phi(x)\phi(y)$ is not Fisher consistent for DTR when the number of stages is more than two.  } Therefore \eqref{cond: sigmoid type} is not a sufficient for Fisher consistency.
  Now we introduce a sufficient condition for Fisher consistency.  
  \begin{condition}\label{cond: sigmoid  condition}
$\phi$ is a strictly increasing function such that
\begin{enumerate}
    \item $\phi(x)>0$ for all $x\in\RR$.
    \item  For all $x\in\RR$, $\phi(x)$ satisfies $\phi(x)+\phi(-x)=C_\phi$ where $C_\phi>0$ is a constant.
    \item $\lim_{x\to\infty}\phi(x)=C_\phi$ and  $\lim_{x\to-\infty}\phi(x)=0$.
\end{enumerate}
 \end{condition}
 We will show in the upcoming Theorem \ref{lemma: approximation error} that Condition \ref{cond: sigmoid  condition} is sufficient for Fisher consistency in the sense that if $\phi$ satisfies Condition \ref{cond: sigmoid  condition}, then $\psi(x,y)=\phi(x)\phi(y)$ is Fisher consistent. 
{\color{black} A\label{page: distributional presentration} $\phi$ satisfying  Condition~\ref{cond: sigmoid  condition} is Fisher consistent for binary classification, and it also satisfies  \eqref{cond: sigmoid type} (see  Lemma~\ref{lemma: sigmoid lemma} in Supplement~\ref{secpf: calibration lemma: auxilliary}). 
Notably, this $\phi$ possesses another important property.  When $C_\phi=1$ and $\phi$ is continuous, $\phi$ becomes the distribution function of an unbounded symmetric random variable. In contrast, the previously mentioned univariate hinge loss $\phi(x)=\min(x+1,1)$ lacks this property. Specifically, when smooth, $\phi$ can be perceived as a smooth version of the 0-1 loss, smoothed via a symmetric distributional kernel. Consequently, it can be inferred that surrogates satisfying Condition~\ref{cond: sigmoid condition} closely approximate the 0-1 loss.   That being said, we do not yet know if Condition~\ref{cond: sigmoid condition} is necessary for Fisher consistency in the DTR problem. }


We provide some examples of functions satisfying Condition~\ref{cond: sigmoid  condition}  below.  
  \begin{example}\label{Example: sigmoid}
The following odd functions are non-decreasing with range $[-1,1]$: 
\begin{enumerate}
    \item $f_a(x)=\frac{x}{1+|x|}$.
    \item $f_b(x)=\frac{2}{\pi}\arctan\lb\frac{\pi x}{2} \rb$.
    \item $f_c(x)=\frac{x}{\sqrt{1+x^2}}$.
    \item $f_d(x)=\tanh(x)$, where $\tanh(x)=\frac{e^x-e^{-x}}{e^x+e^{-x}}$.
\end{enumerate}
Then $\phi_\jmath(x)=1+f_\jmath(x)$ satisfies Condition~\ref{cond: sigmoid  condition} with $C_\phi=2$, for $\jmath = a, b, c, d$. See Figure~\ref{fig: sigmoid functions} for the pictorial representation of these functions.
\begin{figure}[h]
    \centering
     \begin{subfigure}[b]{\textwidth}
     \centering
    \includegraphics[width=\textwidth]{"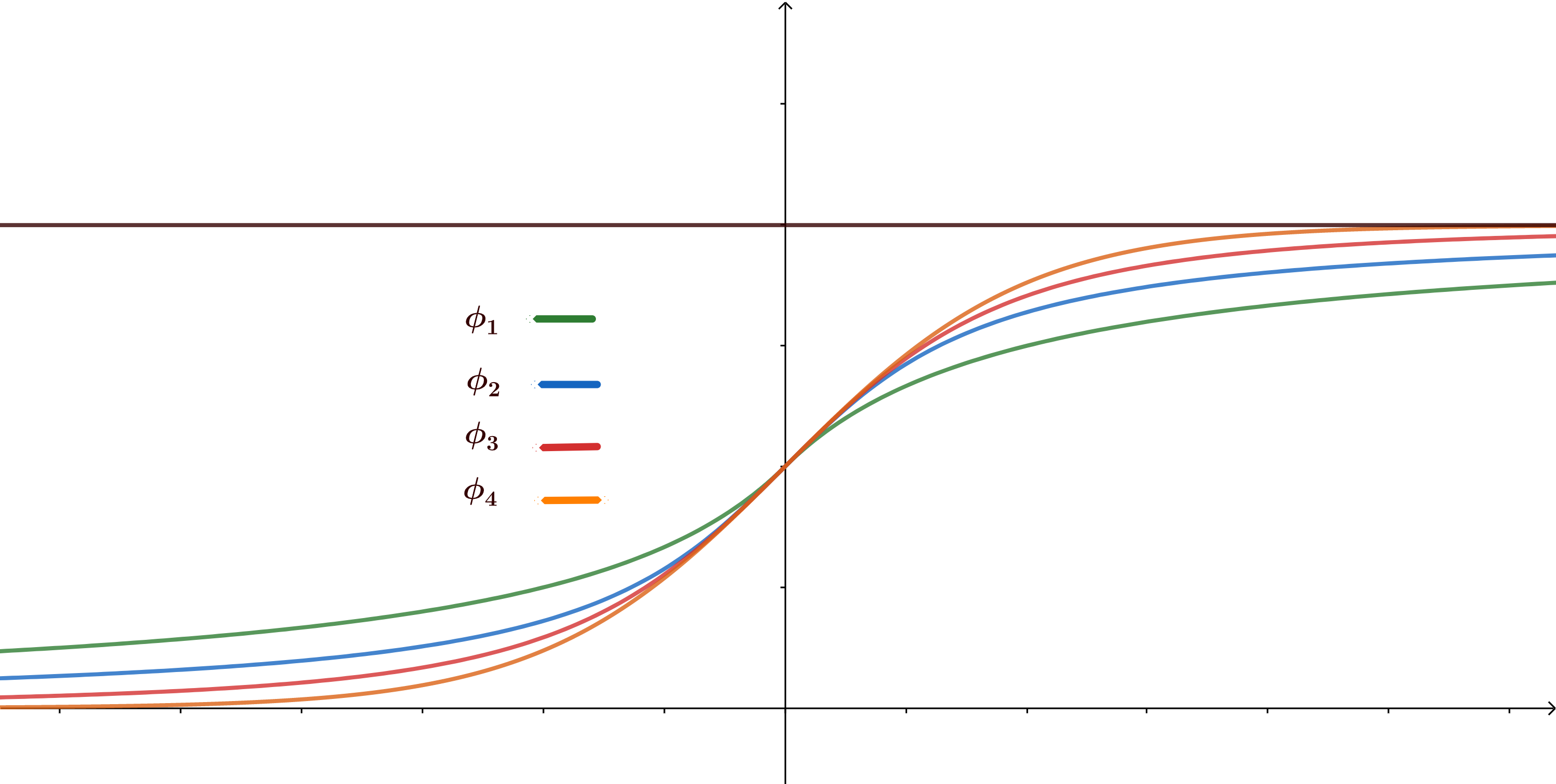"}
    \caption{Plot of  $\phi$'s.}\label{fig: sigmoid functions}
    \end{subfigure}\\
     \begin{subfigure}[b]{\textwidth}
     \centering
    \includegraphics[width=\textwidth]{"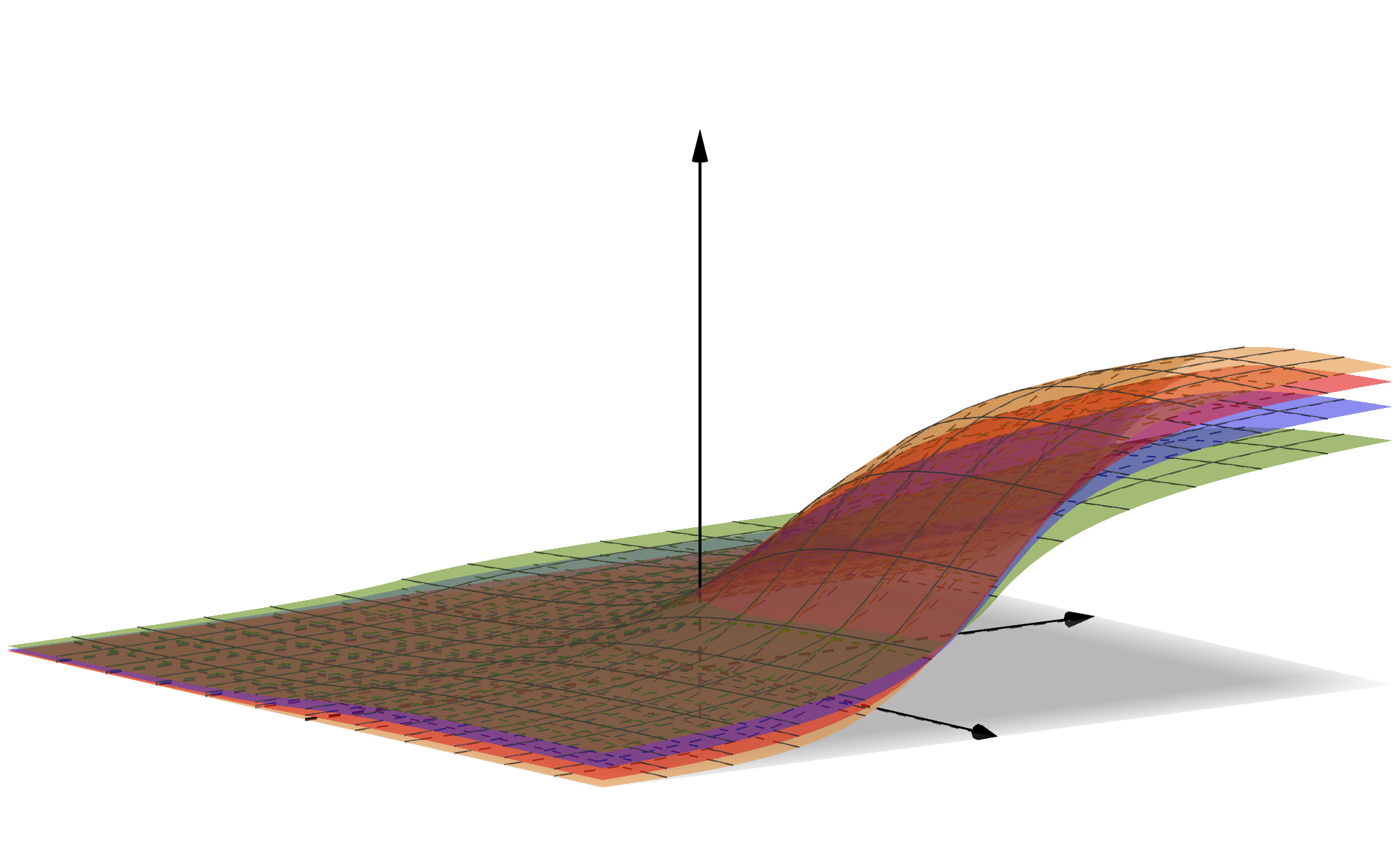"}
    \caption{Plot of the surrogate $\psi$'s obtained from the above $\phi$'s.}\label{fig: sigmoid functions biv}
    \end{subfigure}
    \caption{Plot of the $\phi$'s in Example \ref{Example: sigmoid} and the corresponding $\psi$'s. Here $\psi(x,y)=\phi(x)\phi(y)$.}
\end{figure}
\end{example}

{\color{black}
\label{page: diff from zo} Our approach involving non-concave surrogates leads to non-convex optimization problems, prompting the question of how it differs from directly optimizing the original value function. While both approaches lead to non-convex optimization, our method results in a smooth optimization problem. In contrast, direct maximization of the value function would lead to a discontinuous optimization problem with jump discontinuities. Moreover, the objective function resulting from the latter optimization problem is flat at the regions of continuity. 

Section \ref{sec: opt: summary} and Supplement \ref{sec:opt error} entails that the surface of our surrogate optimization problem exhibits favorable properties, and the optimization error with gradient descent-type algorithms may be small under certain  conditions. In contrast, gradient descent-type methods would likely fail for the discontinuous  problem resulting from direct value function maximization, and no known condition or method guarantees small optimization error for these methods in such problems \citep{xu2014model}.  This makes direct optimization of the value function considerably more challenging than our method. Objectives with 0-1 loss appear naturally in various machine-learning problems. As far as we know, In current statistical machine-learning literature, direct optimization of such objectives is avoided, and instead, the original 0-1 loss is replaced with a more well-behaved surrogate loss, whether convex or not, whenever such a surrogate is available  \citep{mukherjee2021optimal,xu2014model,horowitz1992smoothed,feng2022nonregular,pedregosa2017,gao2011,calauzenes2012}.

The class specified by\label{page: examples} Condition \ref{cond: sigmoid condition} has been mentioned in various machine-learning problems, often presented in forms appropriate for a minimization problem. In certain instances, it is referred to as the smoothed 0--1 loss. In some of these machine learning  problems, this class has been proposed in situations where convex surrogates have demonstrated inconsistency.   For example, \cite{gao2011} has shown that this class of surrogates is Fisher consistent for multilabel classification with ranking loss, where convex surrogates are inconsistent. \cite{feng2022nonregular} established Fisher-consistency-related guarantees for such surrogates  in a diverse range of problems including  Covariate-adjusted Youden index estimation, one-bit compress sensing, and maximum score estimation in  binary response model; see also \cite{xu2014model,mukherjee2021optimal}. Especially for maximum score estimation, \cite{feng2022nonregular}  showed that common convex surrogates such as exponential and hinge loss are inconsistent \citep{feng2022nonregular}.  Finally, the surrogate loss used for multivariate $\psi$-learning in the context of multicategory classification is a non-smooth member of our class \citep{liu2006multicategory}. The authors of that work claim that this non-concave surrogate outperforms SVM, which relies on hinge loss. }

\subsubsection{Fisher consistency of $\psi$}
Instead of directly proving Fisher consistency, we will  bound the true regret $V^*-V(f_1,f_2)$ in terms of  the $\psi$-regret $V_\psi^*-V_\psi(f_1,f_2)$. 
The benefit of such a bound is that the rate of convergence of the true regret will be readily given by that of the $\psi$-regret, which we actually minimize.

  As mentioned earlier, the true regret and the $\psi$-regret parallel the  excess risk $\mathcal R(f)-\R^*$ and the $\phi$-excess risk  $\mathcal R_{\phi}(f)-\R_\phi^*$ in binary classification. The relationship between the latter have been well-studied.  For  Fisher consistent $\phi$, \cite{bartlett2006} show that
 \[h_\phi\slb\mathcal R(f)-\R^*\srb\leq \mathcal R_{\phi}(f)-\R_\phi^*,\]
 where $h_\phi$ is a  convex function satisfying $h_\phi(0)=0$. In view of the  fact that the univariate sigmoid loss  leads to a linear $h_\phi$   \citep[][Example 4]{bartlett2006}, it is reasonable to expect that a similar inequality holds when
 $\psi(x,y)=\phi(x)\phi(y)$ with $\phi$  as in Condition~\ref{cond: sigmoid  condition}, as confirmed in following theorem.
 
 
 

 \begin{theorem}\label{lemma: approximation error}
 Suppose $Y_1,Y_2>0$ and Assumptions I-IV hold. Let $\psi(x,y)=\phi(x)\phi(y)$ with $\phi$ satisfying Condition \ref{cond: sigmoid  condition} with some $C_\phi>0$. Then 
 \begin{equation}\label{scaling: ub: regret}
     V^*-  V(f_1,f_2)\leq \frac{ \slb V_\psi^*-  V_\psi(f_1,f_2)\srb}{(C_\phi/2)^2}.
 \end{equation}
 \end{theorem}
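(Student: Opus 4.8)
The plan is to reduce the bivariate DTR regret bound to the univariate calibration results for the sigmoid-type loss established in Lemma~\ref{lemma: sigmoid lemma}, exploiting the product structure $\psi(x,y)=\phi(x)\phi(y)$. First I would rewrite both the true value $V(f_1,f_2)$ and the surrogate value $V_\psi(f_1,f_2)$ using iterated conditional expectations, peeling off the second stage first. Fixing $H_2$ and conditioning, the inner second-stage problem becomes a weighted binary classification problem in $f_2(H_2)$ with weights governed by $\E[Y_1+Y_2\mid A_2=\pm1, H_2]$; the conditional probability $\Gt(H_2)$ defined in \eqref{def: Gt} plays the role of $\eta$. The key point is that, because $\phi$ satisfies Condition~\ref{cond: sigmoid condition} and hence \eqref{cond: sigmoid type}, the pointwise-optimal inner surrogate value equals $C_\phi \max(\Gt,1-\Gt)$ up to the common normalizing factor, which exactly matches the structure of the true inner value $\max$. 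This lets me convert the second-stage $\psi$-regret into a pointwise excess-risk quantity that controls the second-stage true regret linearly, with constant $C_\phi/2$, mirroring \citep[][Example 4]{bartlett2006}.

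Second, I would handle the first stage. After optimizing out $f_2$, the residual problem in $f_1(H_1)$ is again a weighted binary classification problem, now with $\G(H_1)$ from \eqref{def: G} in the role of $\eta$ and weights built from $\E[Y_1+U_2^*(H_2)\mid A_1=\pm1, H_1]$. Here the factor $\phi(A_2 f_2(H_2))$ contributes the second copy of $C_\phi/2$: because $\phi\ge 0$ and $\phi(x)+\phi(-x)=C_\phi$, the integrated second-stage factor is bounded by $C_\phi$ and, at the joint optimum, contributes exactly a factor of $C_\phi/2$ per the sigmoid identity. Multiplying the two stage-wise linear bounds yields the denominator $(C_\phi/2)^2$ in \eqref{scaling: ub: regret}. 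The assumptions $Y_1,Y_2>0$ and positivity (Assumption~I) guarantee the weights are nonnegative and the propensity denominators are bounded, so all conditional expectations are well defined and the importance-sampling identity is valid.

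The main obstacle I anticipate is the interaction between the two stages: unlike a clean product of two independent binary problems, the first-stage weight $\E[Y_1+U_2^*(H_2)\mid A_1,H_1]$ involves $U_2^*$, which is itself the second-stage optimum, and the surrogate first-stage objective instead sees the $\phi$-weighted second-stage value rather than the true $\max$. I would need to show that replacing the true second-stage value by its $\phi$-surrogate counterpart inside the first-stage objective only loses a controlled amount, and that this loss is already accounted for by the second-stage $\psi$-regret term. Concretely, the hard step is decomposing the total $\psi$-regret $V_\psi^*-V_\psi(f_1,f_2)$ into a first-stage piece and a second-stage piece in a way that both pieces remain nonnegative and each dominates the corresponding true-regret piece after the sigmoid identity is applied; a naive decomposition risks cross terms of indefinite sign. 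I expect to resolve this by introducing the intermediate quantity obtained from optimizing $f_2$ while holding $f_1$ fixed, bounding $V^*-V(f_1,f_2)$ by the sum of a second-stage and a first-stage excess risk, and then applying the pointwise linear calibration inequality from Lemma~\ref{lemma: sigmoid lemma} to each, so that the two factors of $C_\phi/2$ emerge multiplicatively.
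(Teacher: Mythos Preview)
Your approach is essentially the paper's: telescope $V_\psi^*-V_\psi(f_1,f_2)$ through the intermediate point $(f_1,\tilde f_2)$ (which, since $\tilde f_2$ does not depend on $f_1$ thanks to $\phi>0$, coincides with your ``optimize $f_2$ holding $f_1$ fixed''), then apply the linear calibration identity of Lemma~\ref{lemma: sigmoid lemma} pointwise to each stage, using positivity of $Y_1,Y_2$ and Assumption~I to keep all weights nonnegative. The only bookkeeping you have slightly off is where the two factors of $C_\phi/2$ originate: in the first-stage piece the second-stage weight is $\phi(A_2\tilde f_2)=C_\phi\,1[A_2 d_2^*(H_2)>0]$ (a full $C_\phi$, not $C_\phi/2$), while the extra $C_\phi/2$ in the second-stage piece comes from lower-bounding $\phi(A_1 f_1(H_1))\ge (C_\phi/2)\,1[A_1 f_1(H_1)>0]$ on the relevant set; the product still yields $(C_\phi/2)^2$.
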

 Theorem~\ref{lemma: approximation error} immediately implies Fisher consistency because if $V_{\psi}(f_{1n},f_{2n})$ converges to $ V_\psi^*$ for some $(f_{1n}$, $f_{2n})\in\F$, then $V(f_{1n},f_{2n})\to V^*$ as well. Theorem~\ref{lemma: approximation error} is proved in Supplement~\ref{secpf: fisher consistency lemma}. 
 
{\color{black} As\label{intuition}  mentioned earlier, a necessary requirement for Fisher consistency is an agreement between $\tilde d$ and $d^*$. 
Proving the latter is also a key step in the proof of Theorem \ref{lemma: approximation error}. Let us provide some intuition as to why the $\tilde d$ corresponding to our $\psi$ may agree with $d^*$.

We mentioned earlier that any $\phi$ satisfying Condition \ref{cond: sigmoid  condition} is Fisher consistent for binary classification. It can be  shown that Fisher consistency for binary classification translates to Fisher consistency for the single-stage case under Assumptions I-IV \citep{chen2017}. Using this insight, we can show that the second stage treatment allocation $\tilde d_2(H_2)$ matches with $d_2^*(H_2)$ for our $\psi$. 
Regarding the first stage, after some algebraic manipulation, we can demonstrate that $\tilde d_1(H_1)$ takes the form in \eqref{eq: prelude to surrogate} analogous to the exponential loss, but with $h(x,y) = \max(x, y)$.  This particular form of $h$ is primarily driven by \eqref{cond: sigmoid type} and the positivity of $\phi$. Since $d_1^*$ satisfies \eqref{eq: prelude to surrogate} with $h(x,y) = \max(x, y)$, the above leads to  $\tilde d_1=d_1^*$.} 

 {\color{black} \label{page: binary}We want to remind the readers that the assumption $Y_1, Y_2 > 0$ is not restrictive. As mentioned earlier, in cases where the observed outcomes are not positive, a location transformation can be applied to ensure positivity without altering the optimal treatment policy $d^*$ and, consequently, $\tilde{d}$.  We also want to emphasize that  Theorem \ref{lemma: approximation error}, as well as  all our upcoming theorems, do not distinguish between continuous and discrete outcomes. Therefore, our method and theory apply to discrete and binary outcomes, which are of interest in many applications.}

 \begin{remark}[Scaling of the $\phi$'s]
 The scaling factor $C_\phi/2$ appears in the regret of (\ref{scaling: ub: regret}) because $\phi$ differ from the zero-one function in scale by a factor of $C_\phi/2$. 
To understand the impact of the scaling factor in the regret bound, suppose  $\phi_2=a\phi_1$ for some $a>0$, and $\psi_t(x,y)=\phi_t(x)\phi_t(y)$ for $t=1,2$. Then
\[ \frac{ \slb V_{\psi_1}^*-  V_{\psi_1}(f_1,f_2)\srb}{C_{\phi_1}^2/4}= \frac{ \slb V_{\psi_2}^*-  V_{\psi_2}(f_1,f_2)\srb}{C_{\phi_2}^2/4}.\]
 Thus,  the regret bound in \eqref{scaling: ub: regret} does not depend on the scale of $\phi$. Nevertheless, during our implementation, we take the scaling factor $C_\phi/2$ to be one so that the surrogate loss is at the same scale as the original zero-one loss.
 \end{remark}

{\color{black}  We\label{non-concave 2} would like to emphasize a crucial point. While our algorithm is capable of handling large sample sizes, it is important to note, as we will discuss in Supplement \ref{sec:opt error}, that guarantees regarding its convergence to the global maximum are scarce. This limitation is a common challenge encountered in non-concave optimization problems. However, it is important to recognize that we employ non-concave losses due to the apparent absence of Fisher-consistent concave losses. In other words, non-concave optimisation may be the only viable choice if one aims to solve the DTR problem through simultaneous optimization. This highlights the inherent difficulty of the DTR problem.

To circumvent non-concave optimization while retaining theoretical guarantees, one has two options: employing a stage-wise Fisher-consistent optimization method like BOWL or opting for a regression-based approach such as Q-learning. However, it is important to note that BOWL achieves Fisher consistency at the expense of reduced sample size in the first stage \cite{zhao2015}. Our simulations in Section \ref{sec: simulation} indicate that BOWL does not outperform our proposed method. Additionally, our simulations demonstrate that BOWL exhibits significantly longer runtimes compared to our method for large sample sizes.}

\section{Main methodology}
\label{sec: main methodology}
In this section, we describe how we use the Fisher-consistent surrogate derived in Section~\ref{sec: fisher consistent surrogate} to estimate the optimal treatment regimes. For the remainder of this paper except Supplement \ref{sec: add hinge loss}, unless otherwise mentioned, $\phi$ will denote a univariate surrogate satisfying Condition~\ref{cond: sigmoid condition}, and $\psi$ will denote the bivariate surrogate $\psi(x,y)=\phi(x)\phi(y)$ where $\phi$ satisfies Condition~\ref{cond: sigmoid condition}.
Define the empirical    $\psi$-value function
\begin{equation}\label{def: value function sample estimate}
\widehat V_{\psi}(f_1,f_2)=\mathbb P_n \lbt\dfrac{(Y_1+Y_2)\psi\slb A_1 f_1(H_1), A_2f_2(H_2)\srb}{\pi_1(A_1\mid H_1)\pi_2(A_2\mid H_2)} \rbt
\end{equation}
Because $\PP$ is unknown, we maximize $\widehat V_{\psi}(f_1,f_2)$
instead of $V_\psi(f_1,f_2)$. Ideally, one should maximize $\widehat V_{\psi}(f_1,f_2)$ over $\F$ but  brute force search over $\F$ is impossible unless $\H_1$ and $\H_2$ are discrete spaces with finite cardinality. Therefore, in practice, one may 
 optimize $\widehat V_\psi(f_1,f_2)$ over a nested  class 
 \[\mathcal U_1\subset\ldots\subset \mathcal \U_n\subset \mathcal F,\]
where $\U_n$ is some rich class of classifiers, preferably a universal class \citep[see][]{zhang2018}. We will discuss  them in more detail later in Section~\ref{sec: Approximation error}. Whatever is the choice of $\U_n$,  maximization of $\widehat V_\psi(f_1,f_2)$ over $(f_1,f_2)\in\U_n$ generally leads to a non-convex optimization problem. 


The surrogate loss based DTR optimization allows flexibility in the choice of $\U_n$ and the modification of the empirical loss $\widehat V_\psi(f_1,f_2)$ to accommodate high dimensional covariates, non-linear effects, and variable selection. 
 One can maximize $\widehat V_\psi(f_1,f_2)+\mathcal P(f_1,f_2)$ instead of $\widehat V_\psi(f_1,f_2)$ to enable variable selection and attain stable estimation, where $\mathcal P(f_1,f_2)$ is a penalty term. One can include complex basis functions in $\U_n$ to incorporate non-linear effects. 
 %
 For example, tree and list based methods \citep{zhang2018interpretable, sun2021stochastic,laber2015tree} {\color{black} as well\label{page: missing REF} as neural networks (see Section \ref{ex; neural network} for details) can be potentially adapted to construct $\U_n$.} 
 Moreover, our method can be extended to $K$ stages by taking $\psi(x_1,\ldots,x_k)=\prod_{i=1}^k \phi(x_i)$.

 \subsection{Decomposition of errors}
\label{sec: error decomposition}
In this section, we will discuss the decomposition of $\psi$-regret of DTRESLO into three sources of errors. 
To that end, let us denote our classifiers by $(\hfa, \hfb)$. 
We will provide upper bounds for the $\psi$-regret $V_\psi(\hfa,\hfb)$, which readily produces an upper bound for the true regret $V(\hfa,\hfb)$ by Theorem~\ref{lemma: approximation error}. 
Before going into further detail, we point out that
\[V(\hfa,\hfb)=\PP\lbt (Y_1+Y_2)\frac{1[A_1\phi(\hfa(H_1))>0]1[A_2\phi(\hfb(H_2))>0]}{\pi_1(A_1\mid H_1)\pi_2(A_2\mid H_2)} \rbt\]
is a random quantity because here we assume that $H_t,A_t,Y_t$ $(t=1,2)$ are  drawn from $\PP$  independent of $(\hfa,\hfb)$. The same holds regarding the $\psi$-regret $V_\psi(\hfa,\hfb)$.
We decompose the $\psi$-regret according to three sources of errors: (i) approximation error due to the approximation of $\F$ by $\U_n$; (ii) estimation error due to the use of finite sample; and (iii) optimization error due to the possibility of not achieving global maximization for $\widehat V_\psi(f_1,f_2)$  since $\psi$ is non-concave.   We define the optimization error  as
\[\Opn=\sup_{(f_1,f_2)\in\U_n}\widehat V_\psi(f_1,f_2)-\widehat V_\psi(\hfa,\hfb).\]

We first provide some heuristics and intuitions for the error decomposition. 
For the time being, let us assume that $\argmax_{(f_1,f_2)\in\U_n}V_\psi^*(f_1,f_2)$ is attained at some $(\tha,\thb)\in\U_n$. The existence of $(\tha,\thb)$ is not guaranteed in general, and even if they exist, $(\tha,\thb)$ will be hard to characterize for an arbitrary $\U_n$. We thus do not assume the existence of $(\tha,\thb)$ in our proof. We define the map
 $\xi_{f_1,f_2,g_1,g_2}:\H_1\times\O_2\times \RR^2\times\{\pm 1\}^2\mapsto\RR$ by
\begin{align}\label{def: GE: xi function}
  \MoveEqLeft  \xi_{f_1,f_2,g_1,g_2}(\mathcal D)\nn\\
    :=&\ \frac{(Y_1+Y_2)\left\{ \psi( A_1g_1(H_1),A_2 g_2(H_2))-\psi( A_1 f_1(H_1),A_2 f_2(H_2)) \right\}}{\pi_1(A_1\mid H_1)\pi_2(A_2\mid H_2)}.
\end{align}
Elementary algebra shows that the $\psi$ regret can be decomposed as follows:
 \begin{align}\label{regret decomposition}
   \MoveEqLeft  V_\psi^*-V_\psi(\hfa,\hfb)\nn\\
     =&\ \underbrace{V_\psi^*-V_\psi(\tha,\thb)}_{\text{Approximation error }}+ (V_\psi-\widehat V_\psi)(\tha,\thb)-(V_\psi-\widehat V_\psi)(\hfa,\hfb)\nn\\
     &\ + \widehat V_\psi(\tha,\thb)- \sup_{(f_1,f_2)\in\U_n}\widehat V_\psi(f_1,f_2)+\underbrace{\sup_{(f_1,f_2)\in\U_n}\widehat V_\psi(f_1,f_2)-\widehat V_\psi(\hfa,\hfb)}_{\text{Optimization error: }\Opn}\nn\\
      \leq &\ \text{Approximation error }+\underbrace{|(\P_n-\P)[\xi_{f_1,f_2,\tha,\thb}]|}_{\text{Estimation error }}+\Opn
 \end{align}
Clearly, $\Opn$ depends on the optimization method used to maximize $\widehat{V}_\psi(f_1,f_2)$ over $\U_n$. {\color{black} We\label{page: error decomposition}  study the optimization error in Supplement \ref{sec:opt error} for specific scenarios. The primary emphasis of this paper revolves around the estimation error and the approximation error. }
In our sharp analysis of the $\psi$-regret, the estimation error bound depends on the approximation error in an intricate manner; see Supplement \ref{sec:subtlety_approx_est_error} for more details.  
To keep our presentation short and focused, therefore, we discuss the approximation error in  Section~\ref{sec: Approximation error}, and  present the final regret bound  in  Section~\ref{sec: generalization}. Details on the explicit analysis of the estimation error can be found in Supplement~\ref{sec: proof of gen error}. Finally, owing to the potential non-concavity, the sharp analysis of the $\psi$-regret is significantly more subtle than existing results and approaches in the literature. 
We elaborate on this more in a detailed discussion presented in Supplement \ref{sec:subtlety_approx_est_error}.


 
 In what follows, similar to \cite{zhao2015}, we assume that the propensity scores, i.e. $\pi_1$ and $\pi_2$ are known. This will hold in particular under  a clinical trial like SMART \citep{laber2019}, but not for observational data.
When $\pi_1$ and $\pi_2$ are unknown, they can be estimated using a logistic regression model. This additional estimation step will not change the approximation error but the estimation error will likely change. 

 \section{Approximation error}
 \label{sec: Approximation error}

\subsection{Assumptions}
\label{sec: small noise}
 
To establish the  convergence rate   of the approximation error, we require two assumptions.
First, we require the standard assumption that the outcomes are bounded.
\begin{assumption}\label{assump: bound on Y}
Outcomes $Y_1,Y_2$ satisfy $\max (Y_1,Y_2)\leq C_y$.
\end{assumption}
The second assumption is the DTR version of Tsybakov's small noise assumption \citep{tsybakov2004, audibert2007}. Recall the blip functions/ conditional treatment effects $\T_1$ and $\T_2$ defined in \eqref{def: treatment effect stage 1} and \eqref{def: treatment effect stage 2}, respectively. 
Because we assume $Y_1$ and $Y_2$ are bounded away from zero, $\eta_t(H_t)=1/2$ if and only if $\T_t(H_t)=0$ for $t=1,2$. Therefore, the treatment boundary $\{h_t:\eta_t(h_t)=1/2\}$ can also be formulated as  $\{h_t:\T_t(h_t)=0\}$. 

In  classification literature, it is well-noted that obtaining a fast rate of convergence (faster than $n^{-1/2}$) requires control on the distribution of the random variable $\eta(X)-1/2$ near the decision boundary $\{x:\eta(x)=1/2\}$ to some degree, which gives rise to the so-called  margin conditions \citep{tsybakov2004, audibert2007}. Similarly, in the DTR context, even with regression-based methods, regulation near the conditional treatment effect boundary $\{H_t:\T_t(H_t)=0\}$
are generally required; see Appendix~\ref{sec: discussion: small noise} for a detailed discussion.
Thus it is expected that we too would require  control on the rate of decay of $\G-1/2$ and $\Gt-1/2$ near the treatment boundary $\{h_1:\G(h_1)=1/2\}$ and $\{h_2:\Gt(h_2)=1/2\}$, respectively, to obtain sharp bound on the $\psi$-regret.
Among  many variants of margin condition,  we consider the Tsybakov small noise condition (Assumption MA of \cite{audibert2007}; see also Proposition 1 of \cite{tsybakov2004}), which has seen wide use in the literature \citep{audibert2007, steinwart2007, blanchard2008}. The DTR formulation of Tsybakov  small noise condition takes the following form:
\begin{assumption}[Tsybakov small noise assumption]
\label{assumption: small noise}
There exist a constant $C>0$, a small number $t_0\in(0,1)$, and positive reals $\alpha_1,\alpha_2$ such that
\[P( 0<|\eta_1(H_1)-1/2|\leq t)\leq Ct^{\alpha_1},\quad  P( 0<|\eta_2(H_2)-1/2|\leq t)\leq C t^{\alpha_2}\]
for all $t<t_0$.
\end{assumption}
The parameters $\alpha_1$ and $\alpha_2$ are the \emph{Tsybakov noise exponents}. We already noted that the $Y_i$'s are bounded below.
 Since the outcomes are also bounded above by Assumption~\ref{assump: bound on Y}, 
 Assumption~\ref{assumption: small noise} is equivalent to saying
\begin{equation}\label{def: small noise: treatment effect}
    P(0<\mathcal T_1(H_1)<t)+P(0<\mathcal T_2(H_2)<t)\leq Ct^{\alpha},\quad\text{ for all }t\leq t_0.
\end{equation}
This alternative version is  more common in precision medicine literature \citep{qian2011, luedtke2016}. See Supplement \ref{sec: discussion: small noise} for more details on the small noise assumption or similar assumptions in precision medicine literature. 
Finally, observe that if a stage has Tsybakov noise exponent $\alpha$, then it also has noise exponent $\alpha'$ for all $\alpha'<\alpha$.  
Thus, to keep our calculations short, we assume that both stages have noise exponent $\alpha$ where $\alpha=\min(\alpha_1,\alpha_2)$. We postpone further discussion on Assumption \ref{assumption: small noise} till Supplement \ref{sec: discussion: small noise}.

Under Assumption~\ref{assumption: small noise}, it turns out that, our surrogates satisfying Condition~\ref{cond: sigmoid  condition} do not exhibit identical approximation error. The difference in the rate stems from the difference in their respective derivatives. Thus, it will be convenient to split the above-mentioned surrogates into two types.
\begin{definition}
\label{def: type of phi}
We say a surrogate $\phi$ satisfying Condition~\ref{cond: sigmoid  condition} is of type A if there exists a constant $B_\phi>0$ and $\kappa\geq 2$ such that $|\phi'(x)|<B_\phi(1+|x|)^{-\kappa}$ for all $x\neq 0$.
We say a surrogate $\phi$ satisfying Condition~\ref{cond: sigmoid  condition} is of type B if there exists a constant $B_\phi>0$ and $\kappa>0$ such that $|\phi'(x)|<B_\phi \exp(-\kappa|x|)$ for all $x\neq 0$.
\end{definition}

\begin{table}[]
    \centering
    \begin{tabular}{lccc}
      $\phi(x)$  &   Type & $B_\phi$ & $\kappa$\\
       \hline\hline
     (a)  $x/(1+|x|)+1 $ & A & 1 & 2\\
      (b) $\frac{2}{\pi}\arctan(\pi x/2)+1$ & A & 2 & 2\\
     (c)  $x/\sqrt{1+x^2}+1$ & A & $2^{3/2}$ & $3$\\
      (d) $1+\tanh(x)$ & B & 4 & $2$\\
       \hline
    \end{tabular}
    \caption{Caption}
    \label{tab: phi table}
\end{table}

First, Definition~\ref{def: type of phi} assumes $\phi$ to be smooth everywhere except perhaps at the origin. This restriction rules out non-smooth $\phi$'s, but they are uninteresting from our implementation perspective anyways. All the $\phi$'s we have considered in Example~\ref{Example: sigmoid} are differentiable at $\RR/\{0\}$ (see  Table~\ref{tab: phi table}; more details can be found in  Supplement \ref{sec: Verify table phi table}). Second, type A merely means  $\phi'$ decays polynomially in $|x|^{-\kappa}$, where type B $\phi$'s enjoy exponential decay of the derivative. 

\subsection{Approximation Error Rate}
\label{sec: approx-rate}

Theorem \ref{thm: approximation error} summarizes the approximation error rate. 
\begin{theorem}
\label{thm: approximation error}
Suppose $\mathbb P$ satisfies Assumptions I-IV, Assumption~\ref{assump: bound on Y} and Assumption \ref{assumption: small noise} with small noise coefficient $\alpha$. Let $0< a_n\to\infty$ be any sequence of positive reals. 
Further suppose there exist a small number $\delta_n\in(0,1)$ and maps $\tfa:\H_1\mapsto\RR$ and $\tfb:\H_2\times\{0,1\}\mapsto\RR$ so that
\[
    \|\tfa-(\G-1/2)\|_{\infty}+\|\tfb-(\Gt-1/2)\|_{\infty}\leq \delta_n
\]
where $\G$ and $\Gt$ are defined in \eqref{def: G} and \eqref{def: Gt}.
 Then for any $\phi$ of type A, the following holds for any $\alpha'\in(0,\alpha)$  satisfying $\alpha-\alpha'<1$:
\begin{align*}
\MoveEqLeft V_\psi^*-V_\psi(a_n\tfa,a_n\tfb)\\
  \lesssim &\begin{cases}
   a_n^{1-\kappa}+\min(\delta_n^{2+\alpha}a_n,\delta_n^{1+\alpha}) & \text{if }\kappa<2+\alpha\\
    \frac{a_n^{-\frac{1+\alpha}{1+(\alpha-\alpha')/(\kappa-1)}}}{\alpha-\alpha'}+\min(\delta_n^{2+\alpha}a_n,\delta_n^{1+\alpha})+\frac{\delta_n^{\alpha'+2-\kappa}}{(\alpha-\alpha')a_n^{\kappa-1}} & \text{if }\kappa\geq 2+\alpha.
   \end{cases}
\end{align*}
Suppose $\phi$ is of type B. Then
 \[V_\psi^*-V_\psi(a_n\tfa,a_n\tfb)\lesssim \frac{(\log a_n)^{1+\alpha}}{a_n^{1+\alpha}}+\min(a_n\delta_n^{2+\alpha},\delta_n^{1+\alpha})+a_n\delta_n\exp(-\kappa a_n \delta_n/2).\]
\end{theorem}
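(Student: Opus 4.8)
The plan is to reduce the two-stage surrogate regret to a sum of pointwise (conditional) regrets at each stage, and then bound each conditional regret by exploiting the sigmoid shape of $\phi$ together with the small-noise Assumption~\ref{assumption: small noise}. \textbf{Step 1 (pointwise reduction).} Since $\psi(x,y)=\phi(x)\phi(y)$ factorizes and the propensity weights cancel the dependence on the observed treatments, I would rewrite $V_\psi(f_1,f_2)$ by iterated expectations, conditioning first on $H_2$ to handle the inner factor $\phi(A_2 f_2(H_2))$ and then on $(H_1,A_1)$ for the outer factor $\phi(A_1 f_1(H_1))$. Writing $S_2(H_2)=\E[Y_1+Y_2\mid H_2,A_2=1]+\E[Y_1+Y_2\mid H_2,A_2=-1]$ and letting $\Gt,\G$ be the conditional-probability maps of \eqref{def: Gt} and \eqref{def: G}, the inner conditional objective is $S_2(H_2)\slb \Gt(H_2)\phi(f_2)+(1-\Gt(H_2))\phi(-f_2)\srb$, whose pointwise maximum equals $C_\phi S_2(H_2)\max(\Gt,1-\Gt)=C_\phi(Y_1+U_2^*(H_2))$ by Lemma~\ref{lemma: sigmoid lemma} and \eqref{def: U 2 star}. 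Substituting the optimizer into the outer expectation and invoking \eqref{def: G} identifies the stage-1 problem with $\G$, so that $V_\psi^*=C_\phi^2 V^*$ and the regret splits into a stage-2 conditional regret averaged over $H_2$ plus a stage-1 conditional regret computed with the stage-2 rule held fixed at $a_n\tfb$.

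\textbf{Step 2 (conditional regret identity and the near-boundary strip).} The algebraic relation $\phi(x)+\phi(-x)=C_\phi$ from Condition~\ref{cond: sigmoid  condition} gives a clean single-stage expression: for $\eta>1/2$ the conditional regret of using argument $a_n\tilde f$ equals $(2\eta-1)\phi(-a_n\tilde f)$, and symmetrically for $\eta<1/2$, i.e. it is $|2\eta-1|\,\phi(-\,\sign(\eta-1/2)\,a_n\tilde f)$. I would split the history space into the region where $\tilde f$ recovers the correct sign — guaranteed whenever $|\eta-1/2|>\delta_n$, since $\|\tilde f-(\eta-1/2)\|_\infty\le\delta_n$ — and the near-boundary strip $\{|\eta-1/2|\le\delta_n\}$. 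On the strip the conditional regret is bounded either crudely by $C_\phi|2\eta-1|\lesssim\delta_n$ or, more finely, by the Lipschitz estimate $|2\eta-1|\,(\sup|\phi'|)\,a_n|\tilde f|\lesssim a_n\delta_n^2$; multiplying by the measure $\lesssim\delta_n^{\alpha}$ supplied by Assumption~\ref{assumption: small noise} produces exactly the $\min(\delta_n^{1+\alpha},a_n\delta_n^{2+\alpha})$ contribution appearing in both cases of the theorem.

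\textbf{Step 3 (decay term and the type A/B distinction).} On the correct-sign region $\phi(-a_n|\tilde f|)$ decays, and using $|\tilde f|\ge|\eta-1/2|-\delta_n$ the conditional regret is controlled by $|2\eta-1|\,\phi(-a_n(|\eta-1/2|-\delta_n))$. Integrating this against the small-noise density (of order $t^{\alpha-1}$ near $t=|\eta-1/2|=0$) is where the type of $\phi$ enters. For type A, $\phi(-u)\lesssim u^{1-\kappa}$ gives an integrand of order $t^{\alpha+1-\kappa}$, which integrates when $\kappa<2+\alpha$ to yield the rate $a_n^{1-\kappa}$; when $\kappa\ge 2+\alpha$ the integral diverges at the boundary, so I would split it at an optimized threshold and balance the divergent part against the $\delta_n$-terms to obtain the interpolated rate, the $\alpha'<\alpha$ slack absorbing the borderline logarithmic factor. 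For type B, $\phi(-u)\lesssim e^{-\kappa u}$ and $\int_0^{1/2}t^{\alpha}e^{-\kappa a_n t}\,dt$ is of order $a_n^{-(1+\alpha)}$; the extra $(\log a_n)^{1+\alpha}$ factor and the tail term $a_n\delta_n e^{-\kappa a_n\delta_n/2}$ arise from the scale at which the exponential becomes negligible and from the interaction of this decay with the $\delta_n$-approximation on the boundary strip.

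\textbf{Main obstacle.} The principal difficulty is the inter-stage coupling. Plugging the suboptimal $a_n\tfb$ into the inner problem perturbs the effective stage-1 contrast away from the clean $C_\phi\,\E[Y_1+U_2^*\mid H_1,A_1]$, so the stage-1 analysis cannot directly invoke $\G$ and its small-noise behavior. The plan is to show that this perturbation is dominated, uniformly over $H_1$, by the stage-2 regret already bounded in Steps 2--3, so that the stage-1 margin structure — and hence Assumption~\ref{assumption: small noise} applied to $\G$ — survives up to a controllable error term. Carrying this propagation through while preserving sharpness, in particular the threshold optimization of Step 3 after the stage-1 contrast has been perturbed, is the technically delicate part; the final rates then follow by combining the stage-1 and stage-2 bounds and appealing to Theorem~\ref{lemma: approximation error} to transfer them to the true regret.
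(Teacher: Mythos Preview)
Your single-stage analysis in Steps~2--3 is essentially correct and matches the paper's treatment: the paper likewise reduces each stage to the pointwise quantity $|2\eta_t-1|\,|\phi(\tilde f_t)-\phi(a_n\tilde h_{n,t})|$, splits at a threshold in $|\eta_t-1/2|$, and uses the type~A/B tail bounds on $\phi'$ together with the small-noise assumption to obtain exactly the terms you describe. One small difference in bookkeeping: rather than splitting into ``correct-sign region'' versus ``strip'', the paper inserts the intermediate point $a_n(\eta_t-1/2)$ via the triangle inequality, writing $|\phi(\tilde f_t)-\phi(a_n\tilde h_{n,t})|\le |\phi(\tilde f_t)-\phi(a_n(\eta_t-1/2))|+|\phi(a_n(\eta_t-1/2))-\phi(a_n\tilde h_{n,t})|$. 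The first piece (call it $S_1$) isolates the ``finiteness'' error and produces $a_n^{1-\kappa}$ or $(\log a_n/a_n)^{1+\alpha}$; the second piece ($S_2$) isolates the $\delta_n$-approximation error and produces the $\min(\delta_n^{1+\alpha},a_n\delta_n^{2+\alpha})$ and the residual tail terms. This is cleaner than tracking $\phi(-a_n(|\eta-1/2|-\delta_n))$ directly, but your route would also work.

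Where you overcomplicate things is the ``main obstacle''. The inter-stage coupling disappears entirely if you telescope in the \emph{other} direction: write $V_\psi^*-V_\psi(f_1,f_2)=\bigl[V_\psi(\tilde f_1,\tilde f_2)-V_\psi(f_1,\tilde f_2)\bigr]+\bigl[V_\psi(f_1,\tilde f_2)-V_\psi(f_1,f_2)\bigr]$. In the first bracket the stage-2 factor is held at the \emph{optimal} $\tilde f_2=\pm\infty$, so $\phi(A_2\tilde f_2)=C_\phi\,1[A_2 d_2^*>0]$ and the inner conditional expectation collapses to $C_\phi\,\E[Y_1+U_2^*(H_2)\mid H_1,A_1]$; thus the stage-1 contrast is exactly $\G$, with no perturbation, and Assumption~\ref{assumption: small noise} applies directly. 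In the second bracket the stage-1 factor $\phi(A_1 f_1(H_1))\in[0,C_\phi]$ is merely a bounded weight, so the stage-2 analysis proceeds with $\Gt$ unperturbed. This is the content of the paper's Lemma~\ref{lemma: approx error: representation}, and it means there is no ``propagation'' argument to carry out. Finally, your closing sentence is off: Theorem~\ref{thm: approximation error} bounds the \emph{surrogate} regret $V_\psi^*-V_\psi$ directly, so there is no transfer to the true regret via Theorem~\ref{lemma: approximation error} at this stage.
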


Theorem~\ref{thm: approximation error} entails that if $\{\tilde h_{n,1}, \tilde h_{n,2}\}$ approximates $\{\eta_1-1/2,\eta_2-1/2\}$ well in the sup-norm, then their scaled versions $\tilde f_{n,1}=a_n\tfa$ and $\tilde f_{n,2}=a_n\tfb$ incur small regret. 
 It may appear a bit unusual in that we require $\tilde f_{n,i}$'s to be close to  the functions $\eta_i-1/2$'s, where $V_\psi$ is actually maximized at $(\tilde f_1,\tilde f_2)$ (see Lemma~\ref{lemma: calibration lemma}). To that end, note that  the extended real valued functions $\tilde f_i$'s can not be approximated by any real valued $f_i$'s  because  $\|f_i-\tilde f_i\|_{\infty}$ is infinity for all such  $f_i$'s. However, the proof of Theorem~\ref{thm: approximation error} ensures that $a_n(\eta_i-1/2)$'s are  good proxy for
 the $\tilde f_i$'s because 
 \[V_\psi(\tilde f_1,\tilde f_2)-V_\psi\slb a_n(\G-1/2),a_n(\Gt-1/2)\srb\]
 is  small. The bounds in Theorem~\ref{thm: approximation error} holds for any small $\delta_n$, whose optimal rate will be found during from  the estimation error calculation.
 
 \subsection{Special case: strong separation with $\alpha = \infty$}
We next describe the convergence rates under a special case of  $\alpha=\infty$, which will be referred as the \emph{strong separation}:
\begin{assumption}[Strong separation]
\label{assump: boundedness}
$\G$ and $\Gt$ are bounded away from zero on their respective domains.
\end{assumption}
Under this setting, the conditional treatment effects are uniformly bounded away from zero. See Section~\ref{sec: discussion: small noise} for related discussion. This special case is of great interest in the current literature, cf.  \cite{zhao2012, zhao2015, qian2011}.
When the strong separation holds, the approximation error can be made much smaller than that of  Theorem~\ref{thm: approximation error} as detailed in
Proposition \ref{prop: approximation error: strong separation}. 
\begin{proposition}
\label{prop: approximation error: strong separation}
 Suppose the conditions of Theorem~\ref{thm: approximation error} hold except  $\P$ satisfies   Assumption~\ref{assump: boundedness} instead of Assumption~\ref{assumption: small noise}. Suppose $\tfa:\H_1\mapsto\RR$ and $\tfb:\H_2\times\{0,1\}\mapsto\RR$ satisfy
\[
    \|\tfa-(\G-1/2)\|_{\infty}+\|\tfb-(\Gt-1/2)\|_{\infty}\leq c
\]
for some $c>0$.
 Then there exists $C>0$ so that the following assertion holds for any large positive number  $a_n$:
\begin{align*}
 V_\psi^*-V_\psi(a_n\tfa,a_n\tfb)
  \lesssim\begin{cases}
   a_n^{-(\kappa-1)} & \text{if }\phi\text{ is of type A,}\\
 \exp(-\kappa C a_n) & \text{if }\phi\text{ is of type B}.
   \end{cases}
\end{align*}
\end{proposition}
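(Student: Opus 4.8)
\emph{Proof proposal.} The plan is to specialize the proof of Theorem~\ref{thm: approximation error} to the noiseless regime $\alpha=\infty$, where the absence of a nontrivial boundary layer makes the pointwise surrogate gap decay uniformly. First I would peel the two stages apart by iterated conditioning. Writing $\psi(x,y)=\phi(x)\phi(y)$ and taking $\E[\,\cdot\mid H_2]$ (note $H_2$ contains $H_1,A_1,Y_1,O_2$, so the first-stage weight $\phi(A_1f_1(H_1))/\pi_1(A_1\mid H_1)$ is $H_2$-measurable and strictly positive by the positivity in Condition~\ref{cond: sigmoid condition}), the objective factors as
\[
V_\psi(f_1,f_2)=\E\!\left[\frac{\phi(A_1f_1(H_1))}{\pi_1(A_1\mid H_1)}\,(m_2^{+}+m_2^{-})\big(\Gt\,\phi(f_2(H_2))+(1-\Gt)\phi(-f_2(H_2))\big)\right],
\]
where $m_2^{\pm}(H_2)=\E[Y_1+Y_2\mid H_2,A_2=\pm1]$ and $\Gt=m_2^{+}/(m_2^{+}+m_2^{-})$. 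By the sigmoid identity \eqref{cond: sigmoid type} from Lemma~\ref{lemma: sigmoid lemma}, the inner supremum over $f_2$ equals $C_\phi\max(m_2^{+},m_2^{-})=C_\phi(Y_1+U_2^*(H_2))$, attained as $f_2\to\sign(\Gt-1/2)\infty$. Plugging this optimal inner value in and conditioning once more on $(H_1,A_1)$ collapses the first stage into the identical binary shape $C_\phi\,\E\big[(M_1^{+}+M_1^{-})(\G\,\phi(f_1)+(1-\G)\phi(-f_1))\big]$, with $M_1^{\pm}(H_1)=\E[Y_1+U_2^*(H_2)\mid H_1,A_1=\pm1]$ and $\G=M_1^{+}/(M_1^{+}+M_1^{-})$. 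This yields $V_\psi^{*}=C_\phi^{2}\,\E[\max(M_1^{+},M_1^{-})]$ and recovers that $\tilde f_1,\tilde f_2$ point along $\sign(\G-1/2),\sign(\Gt-1/2)$, matching Lemma~\ref{lemma: calibration lemma}.

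Next I would write the regret as a stage-two term, obtained by fixing $f_1=a_n\tfa$ and comparing $f_2=a_n\tfb$ with the inner optimum, plus a stage-one term comparing $f_1=a_n\tfa$ with its optimum after stage two is optimized. Using $\phi(z)+\phi(-z)=C_\phi$, each pointwise discrepancy reduces to the clean form $(\text{weight})\cdot|2\eta-1|\cdot\phi(-a_n|\tilde h|)$, where $\eta\in\{\G,\Gt\}$ and $\tilde h\in\{\tfa,\tfb\}$; the weights $(M_1^{+}+M_1^{-})$, $(m_2^{+}+m_2^{-})$, and $\phi(A_1a_n\tfa)/\pi_1$ are all uniformly bounded by Assumption~\ref{assump: bound on Y} together with the positivity bound $\pi_t\ge C_\pi$. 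Here strong separation (Assumption~\ref{assump: boundedness}) is used decisively: there is a margin $c_0>0$ with $|\eta-1/2|\ge c_0$ everywhere, so as long as the approximation radius obeys $c<c_0$, the functions $\tfa,\tfb$ inherit the correct signs and satisfy $|\tilde h|\ge c_0-c=:c_1>0$. Consequently every pointwise gap is at most a fixed constant times $\phi(-a_nc_1)$, uniformly in the conditioning variables, and the whole regret inherits this single rate.

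It remains to bound the tail $\phi(-a_nc_1)$. Since $\phi(x)+\phi(-x)=C_\phi$ forces $\phi'$ to be even and $\phi(-\infty)=0$, I would write $\phi(-z)=\int_z^\infty\phi'(u)\,du$ and insert the decay rates of Definition~\ref{def: type of phi}. For type A, $\phi(-z)\le\int_z^\infty B_\phi(1+u)^{-\kappa}\,du=\frac{B_\phi}{\kappa-1}(1+z)^{-(\kappa-1)}$, which at $z=a_nc_1$ produces the $a_n^{-(\kappa-1)}$ rate (the integral converges because $\kappa\ge2$); for type B, $\phi(-z)\le\int_z^\infty B_\phi e^{-\kappa u}\,du=\frac{B_\phi}{\kappa}e^{-\kappa z}$, giving $\exp(-\kappa C a_n)$ with $C=c_1$. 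Adding the stage-one and stage-two bounds, both of the same order, gives the stated rates.

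The main obstacle is less any individual estimate than the bookkeeping that reduces both stages to the univariate template $\eta\phi(f)+(1-\eta)\phi(-f)$ while showing the outer weights stay harmless; the essential simplification relative to Theorem~\ref{thm: approximation error} is exactly that strong separation erases the near-boundary region $\{|\eta-1/2|\lesssim a_n^{-1}\}$, whose delicate trade-off between an $O(1)$ local gap and its Tsybakov-controlled measure is what forces the more elaborate rates in the general case. I would also be careful to state explicitly that the radius $c$ must lie below the separation margin $c_0$, since otherwise $\tfa,\tfb$ could flip sign and the approximation error need not vanish at all.
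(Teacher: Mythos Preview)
Your proposal is correct and follows essentially the same route as the paper: both reduce $V_\psi^*-V_\psi(a_n\tfa,a_n\tfb)$ to a sum of two stagewise terms $t_1+t_2$ via iterated conditioning (the paper packages this as Lemma~\ref{lemma: approx error: representation}), and both finish by bounding the tail $C_\phi-\phi(z)=\int_z^\infty\phi'(u)\,du$ using the type-A/type-B derivative decay (the paper's Lemma~\ref{lemma: approx: the T1 term}). The one minor difference is that the paper inserts an intermediate point $a_n(\eta_t-1/2)$ and splits each $t_j$ into $S_1+S_2$ via the triangle inequality, controlling $S_2$ through a Lipschitz lemma for $\phi$; you instead use strong separation directly to force $\sign(\tilde h_{n,t})=\sign(\eta_t-1/2)$ and $|\tilde h_{n,t}|\ge c_1>0$, which collapses both pieces into a single bound $\phi(-a_nc_1)$ and avoids the Lipschitz step. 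Your observation that one needs $c$ below the separation margin $c_0$ is exactly the implicit constraint the paper's $S_2$ bound also relies on.
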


\begin{remark}
Proposition~\ref{prop: approximation error: strong separation} implies that under the strong separation assumption,  $\tfa$ and $\tfb$ do not even need to approximate $\eta_1-1/2$ and $\eta_2-1/2$  very precisely and the approximation error decays substantially faster. To get a sense of how fast the regret diminishes, we consider the case when $a_n=n$. In this case, the regret  is $O(1/n)$ for a $\phi$ of type A because  $\kappa\geq 2$ for a type A $\phi$. The regret decays exponentially fast for a type B $\phi$, which can be attributed to the exponential decay of its  derivatives. 
 Recall that the derivative of the zero-one loss function  is exactly zero at any  $x\neq 0$. Thus  the type B surrogates bear closer resemblance to the original zero-one loss function. 
\end{remark}

Theorem~\ref{thm: approximation error} or Proposition \ref{prop: approximation error: strong separation} bound the approximation error because if $\U_n=\U_{1n}\times\U_{2n}$ is  such that 
\begin{align}
\label{intext: approx}
  \inf_{f_t\in\U_{tn}}\|f_t-(\eta_t-1/2)\|_\infty<\delta_n, \quad t=1,2,
\end{align}
then Theorem~\ref{thm: approximation error} or Proposition \ref{prop: approximation error: strong separation}  upper bound 
 $V_\psi^*-\sup_{(f_1,f_2)\in\U_n}V_\psi(f_1,f_2)$.

 \section{Estimation error}
\label{sec: generalization}

In this section, we focus on  the estimation error in \eqref{regret decomposition}, and provide sharp regret-bound for a selected set of classifiers by combining all sources of error. We assume that $(\hfa,\hfb)\in \U_n=\Ha\times \Hb$, where $\Ha,\Hb$ are  classes of functions. 
Our analysis in this section is  fully nonparametric because our $\U_n$ is agnostic of the underlying data-generating mechanism.
We first present some theorems (Theorems \ref{thm: weak: general thm}, \ref{theorem: GE: strong separation} and  \ref{theorem: GE: strong GE: basis}) for general $\U_n$'s. 
Then we will move  to study the particular examples of neural networks and wavelets.  


\subsubsection{Estimation error  when $\U_n$ is a general function-class}

For general function-classes, we  need some assumptions to control the complexity of $\U_n$. Such assumptions are widely used for bounding the expectation of the estimation error \citep{koltchinskii2009, bartlett2006, bartlett2002}.
To define complexity in the context of function-classes, we need to introduce the concept of the bracketing entropy. Given two functions $f_l$ and $f_u$, the bracket $[f_l,f_u]$ is the set of all function $f$ satisfying $f_l\leq f\leq f_u$. Suppose  $\|\cdot\|$ is a norm on the function-space  and $\e>0$. Then $[f_l,f_u]$ is called  an $\e$-bracket  if $\|f_u-f_l\|<\e$. For a function-class $\mathcal G$, we define the bracketing entropy $N_{[\ ]}(\e,\mathcal G,\|\cdot\|)$ to be the minimum number of $\e$-brackets needed to cover $\mathcal G$. This  is a measure of the complexity of $\mathcal G$.
We will see that the estimation error directly depends on the bracketing entropy  of $\U_n$. We will first consider the case when just the small noise assumption (Assumption \ref{assumption: small noise}) holds, and then we will move to the special case when the strong separation (Assumption~\ref{assump: boundedness}) holds. 

We derive the estimation error of DTRESLO under the small noise assumption  (Assumption \ref{assumption: small noise}) when
\begin{equation}\label{instatement: GE: weak GE}
    N_{[\ ]}(\e,\U_{tn}, \|\cdot\|_\infty)\lesssim \lb\frac{A_{n}}{\e}\rb^{\rho_n},\quad t=1,2
\end{equation}
where $A_n,\rho_n>0$.  This leads to the regret bound of Theorem \ref{thm: weak: general thm}  that depends on $A_n$ and $\rho_n$.
The $\U_n$'s that satisfy \eqref{instatement: GE: weak GE} are called VC-type classes \citep[p. 41][]{koltchinskii2009}. In all our examples, $\U_n$ will  satisfy \eqref{instatement: GE: weak GE} for appropriate $A_n$ and $\rho_n$.

 \begin{theorem}
\label{thm: weak: general thm}
Suppose $\U_n$  is such that  there exists $A_n>0$ and $\rho_n\in\RR$ so that  \eqref{instatement: GE: weak GE} holds with $\liminf_n\rho_n>0$, $\rho_n\log A_n=o(n)$, and  $\liminf_n\rho_n\log A_n>0$. 
Further suppose there exist $(\tha,\thb)\in\mathcal U_n$
so that
\begin{equation}\label{instatement: GE: weak: approx}
    \|\tha/a_n-(\G-1/2)\|_\infty+\|\thb/a_n-(\Gt-1/2)\|_\infty\leq \lb \frac{\rho_n\log A_n}{n}\rb^{1/(2+\alpha)}
\end{equation}
for some $a_n=n^a$ where $a>1$.
We also assume that $\mathbb P$ satisfies Assumptions I-IV, Assumption \ref{assump: bound on Y}, and Assumption \ref{assumption: small noise} with coefficient $\alpha>0$. Then there exist $C>0$  and $N_0\geq 1$ such that for all $n\geq N_0$ and all $x>0$,
\[V_\psi^*-V_\psi(\hfa,\hfb)\leq  C\max\lbs (1+x)^2 (\log n)^2 \lb \frac{\rho_n\log A_n}{n}\rb^{\frac{1+\alpha}{2+\alpha}},\Opn\rbs\]
with probability at least $1-\exp(-x)$.
\end{theorem}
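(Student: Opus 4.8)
The proof combines the deterministic decomposition \eqref{regret decomposition} with a localized empirical process analysis of the estimation term. Throughout I set $\delta_n=\lb\frac{\rho_n\log A_n}{n}\rb^{1/(2+\alpha)}$, so that $\delta_n^{1+\alpha}$ equals the target rate and the hypothesis \eqref{instatement: GE: weak: approx} is precisely $\|\tha/a_n-(\G-1/2)\|_\infty+\|\thb/a_n-(\Gt-1/2)\|_\infty\leq\delta_n$, the input required by Theorem~\ref{thm: approximation error}. The growth conditions $\rho_n\log A_n=o(n)$ and $\liminf_n\rho_n\log A_n>0$ guarantee $\delta_n\in(0,1)$ and place the eventual critical radius in the regime where the fixed-point argument is valid. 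The plan is: (i) bound the approximation error $V_\psi^*-V_\psi(\tha,\thb)$ by Theorem~\ref{thm: approximation error}; (ii) bound the estimation error $|(\P_n-\P)\xi_{\hfa,\hfb,\tha,\thb}|$ by a uniform deviation over $\U_n$ sharpened through a variance (Bernstein) condition; and (iii) carry $\Opn$ through untouched and take a maximum at the end.

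For step (i) I would apply Theorem~\ref{thm: approximation error} with this $\delta_n$ and $a_n=n^a$, $a>1$. Since $\kappa\geq 2$, the purely scale-driven term $a_n^{1-\kappa}$ (and its type-B analogue) is $O(n^{-a})$, and the mixed remainder $\delta_n^{\alpha'+2-\kappa}a_n^{-(\kappa-1)}$ is, after choosing $\alpha'$ with $\alpha-\alpha'<1$, likewise negligible against $\delta_n^{1+\alpha}=(\rho_n\log A_n/n)^{(1+\alpha)/(2+\alpha)}$; moreover $\min(\delta_n^{2+\alpha}a_n,\delta_n^{1+\alpha})\leq\delta_n^{1+\alpha}$. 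Thus the approximation error is of the target order, consistent with the $(\log n)^2$ factor already present in the bound.

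Step (ii) is the substance. Because $(\hfa,\hfb)\in\U_n$ is data dependent, I would bound the estimation term by $\sup_{(f_1,f_2)\in\U_n}|(\P_n-\P)\xi_{f_1,f_2,\tha,\thb}|$ and control this supremum by localization rather than a crude uniform bound, which would only yield the slow $n^{-1/2}$ rate. The engine is a \emph{variance-to-mean} inequality: using Assumption~\ref{assumption: small noise} together with a pointwise self-calibration estimate for the bivariate sigmoid surrogate, I would show
\[
\P\,\xi_{f_1,f_2,\tha,\thb}^2\ \lesssim\ \slb V_\psi(\tha,\thb)-V_\psi(f_1,f_2)\srb^{\frac{\alpha}{1+\alpha}}+\text{(approximation remainder)},
\]
the Bernstein condition with exponent $\beta=\alpha/(1+\alpha)$. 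Crucially, $\psi=\phi(x)\phi(y)$ is bounded and the weights $(Y_1+Y_2)/(\pi_1\pi_2)$ are bounded by Assumptions~I and~\ref{assump: bound on Y}, so $\xi$ is uniformly bounded regardless of the scale $a_n=n^a$; and since $\phi$ is Lipschitz, the class $\{\xi_{f_1,f_2,\tha,\thb}:(f_1,f_2)\in\U_n\}$ inherits a VC-type bracketing bound $N_{[\ ]}(\e,\cdot,\|\cdot\|_\infty)\lesssim(A_n/\e)^{2\rho_n}$ from \eqref{instatement: GE: weak GE}. I would then peel $\U_n$ into shells on which $V_\psi(\tha,\thb)-V_\psi(f_1,f_2)\asymp 2^j r$, apply Talagrand's (Bousquet's) concentration inequality on each shell with the variance bound and the bracketing entropy integral $\int_0^\sigma\sqrt{\rho_n\log(A_n/\e)}\,d\e\lesssim\sigma\sqrt{\rho_n\log A_n}$, and union bound over the $O(\log n)$ shells; the factors $(1+x)^2$ and $(\log n)^2$ in the statement originate here. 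Solving the sub-root fixed point $r\lesssim\sqrt{\frac{\rho_n\log A_n}{n}\,r^{\beta}}+\frac{\rho_n\log A_n}{n}$ gives the critical radius $r^\star\asymp(\rho_n\log A_n/n)^{(1+\alpha)/(2+\alpha)}$, matching the target; adding the approximation error and $\Opn$ and taking the maximum completes the argument.

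The main obstacle is the variance bound. Because $H_2$ contains $H_1$ and the two stages are coupled through the blip function $U_2^*$, the pointwise $\psi$-regret does not factor across stages, so the self-calibration inequality underlying $\beta=\alpha/(1+\alpha)$ must be derived jointly rather than stagewise --- precisely the intricate interdependence flagged in Supplement~\ref{sec:subtlety_approx_est_error}. A second delicate point is that the comparison is taken against the near-optimizer $(\tha,\thb)$ rather than an exact (possibly nonexistent) maximizer, so the variance estimate carries an approximation-dependent remainder that must be shown to be of order $\delta_n^{1+\alpha}$ and absorbed. Tracking this remainder, together with verifying that the variance exponent and all constants remain stable as the curvature of $x\mapsto\phi(a_n x)$ sharpens with $a_n=n^a\to\infty$, is the most technical part of the proof.
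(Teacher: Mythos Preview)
Your proposal is correct and follows essentially the same strategy as the paper: decomposition \eqref{regret decomposition}, approximation error via Theorem~\ref{thm: approximation error}, a Bernstein-type variance bound with exponent $\alpha/(1+\alpha)$ under Assumption~\ref{assumption: small noise}, and a Talagrand/Bousquet concentration step yielding the fixed point $r^\star\asymp(\rho_n\log A_n/n)^{(1+\alpha)/(2+\alpha)}$.

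Two minor implementation differences are worth flagging. First, the paper does not peel: it establishes the variance inequality centered at the \emph{ideal} maximizers $(\tilde f_1,\tilde f_2)$ of Lemma~\ref{lemma: calibration lemma}, namely $\|\xi_{f_1,f_2,\tilde f_1,\tilde f_2}\|_{\PP,2}^2\lesssim(V_\psi^*-V_\psi(f_1,f_2))^{\alpha/(1+\alpha)}$ (Lemma~\ref{lemma: GE: the lower bound lemma for two stage under weak separation}), then passes to the class centered at $(\tha,\thb)$ via the triangle inequality, applies Talagrand once to the localized class $\mathcal G_n(\e_n)$ with (random) radius $\e_n=\delta_n^{\alpha/2}+r_n^{\alpha/\{2(1+\alpha)\}}$, and solves the resulting implicit inequality for $r_n$. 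Your peeling-plus-union-bound route and your direct centering at $(\tha,\thb)$ with an explicit approximation remainder are equivalent; the paper's version simply sidesteps the shell bookkeeping, and one of its $\log n$ factors comes from $\log(A_n/\e_n)$ rather than a union bound over shells. Second, the paper's variance lemma is proved by expanding the $\psi$-regret through Lemma~\ref{lemma: approx error: representation} into stagewise pieces $t_1,t_2$ and lower bounding each via the margin condition, rather than by a single ``joint self-calibration'' estimate; this is exactly the coupled-across-stages argument you anticipated, just organized slightly differently.
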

Theorem \ref{thm: weak: general thm} is proved in Section~\ref{sec: proof of gen: weak sep}. 
In our examples with neural networks and wavelets, we will see that the regret bound in Theorem~\ref{thm: weak: general thm} leads to sharp rates provided $\G$ and $\Gt$ satisfy some smoothness conditions. 

Next we will derive  regret bounds under the special case of strong separation (Assumption~\ref{assump: boundedness}). We begin with a theorem that consider $\U_n$'s satisfying the following entropy bound:
\begin{align}\label{entropy bound: large class}
    \log N_{[\ ]}(\e,\U_{tn}, L_2(\PP_n))\lesssim \lb\frac{A_{n}}{\e}\rb^{2\rho_n},\quad t=1,2.
\end{align}
General \Holder\ classes satisfy \eqref{entropy bound: large class} \citep[cf. p.154,][]{wc}.

\begin{theorem}\label{theorem: GE: strong separation}
Suppose the function-class $\U_n$  is such  that 
 \eqref{entropy bound: large class} holds with $\rho_n\in(0,1)$, $A_n> 1$, where it also holds that  ${A_n^{2\rho_n}}/{n}\to 0$.
 Further suppose
  the approximation error
\[V_{\psi}^*-\sup_{(f_1,f_2)\in\U_n}V_{\psi}(f_1,f_2)=O((\log n)^k/{n})\]
for some $k\in\NN$, the optimization error $\Opn<1/2$, and Assumptions I-IV, \ref{assump: bound on Y}, and \ref{assump: boundedness} hold. Then there exist $C>0$ and $N_0>0$ such that for all  $n\geq N_0$ and for any $x>0$,
\[V_{\psi}^*-V_{\psi}(\hfa,\hfb)\leq  C\max\lbs (1+x)^{2/(1+\rho_n)} {A_n^{2\rho_n/(1+\rho_n)}}{n^{-1/(1+\rho_n)}},\ \Opn\rbs\]
with probability at least $1-\exp(-x)$.
\end{theorem}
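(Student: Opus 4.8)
The plan is to start from the $\psi$-regret decomposition \eqref{regret decomposition}, which gives
\[V_\psi^*-V_\psi(\hfa,\hfb)\ \leq\ \underbrace{\lb V_\psi^*-\sup_{(f_1,f_2)\in\U_n}V_\psi(f_1,f_2)\rb}_{\text{approximation error}}+\ \big|(\P_n-\P)[\xi_{\hfa,\hfb,\tha,\thb}]\big|\ +\ \Opn,\]
where $\xi$ is as in \eqref{def: GE: xi function} and $(\tha,\thb)$ is a near-maximizer of $V_\psi$ over $\U_n$. The approximation error is $O((\log n)^k/n)$ by hypothesis, and $\Opn$ is carried into the final $\max$. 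Thus the whole task reduces to a high-probability bound on the localized empirical process $|(\P_n-\P)[\xi_{\hfa,\hfb,\tha,\thb}]|$, and the target rate $A_n^{2\rho_n/(1+\rho_n)}n^{-1/(1+\rho_n)}$ must emerge as the fixed point of the associated critical-radius equation.

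The decisive ingredient is a variance bound. Under strong separation (Assumption~\ref{assump: boundedness}) the treatment effects are uniformly bounded away from zero, which is the $\alpha=\infty$ endpoint of the small-noise condition; I would use this to establish a Bernstein-type relation with exponent one, namely
\[\mathrm{Var}\big(\xi_{f_1,f_2,\tha,\thb}\big)\ \lesssim\ \big|\P[\xi_{f_1,f_2,\tha,\thb}]\big|\ +\ O\!\lb (\log n)^k/n\rb,\]
so that the variance of the loss increment is controlled linearly by the within-class $\psi$-regret, up to the approximation-error order. Here I would exploit that $\psi(x,y)=\phi(x)\phi(y)$ is bounded and Lipschitz (Condition~\ref{cond: sigmoid  condition}), that $Y_1+Y_2$ is bounded (Assumption~\ref{assump: bound on Y}) and the propensities are bounded below (Assumption~I), and that strong separation forces the region where $(f_1,f_2)$ can disagree with the optimal rule to carry variance comparable to the excess risk it produces. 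This linear variance--mean relation is precisely what upgrades the slow $n^{-1/2}$ rate to the fast rate claimed.

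With the variance bound in hand I would run a peeling/localization argument. Slice $\U_n$ into dyadic shells on which $|\P[\xi]|$ is of order $r$; on each shell the variance is $\lesssim r$, so by the entropy bound \eqref{entropy bound: large class} with exponent $2\rho_n$ the expected supremum of $(\P_n-\P)[\xi]$ is controlled through Dudley's entropy integral in $L_2(\P_n)$ by $n^{-1/2}A_n^{\rho_n}r^{(1-\rho_n)/2}$, and Talagrand's concentration inequality converts this into a tail bound with the deviation parameter $x$ entering through the usual variance and sup terms. Equating the empirical fluctuation to the shell level $r$ yields the fixed-point equation $r\asymp n^{-1/2}A_n^{\rho_n}r^{(1-\rho_n)/2}$, whose solution is $r\asymp A_n^{2\rho_n/(1+\rho_n)}n^{-1/(1+\rho_n)}$; propagating $x$ through the same fixed point produces the factor $(1+x)^{2/(1+\rho_n)}$. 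A union bound over the $O(\log n)$ active shells, absorbed into constants for $n\geq N_0$ using $A_n^{2\rho_n}/n\to 0$, delivers the stated inequality, with the $\max$ against $\Opn$ marking the regret level below which optimization error dominates.

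The main obstacle is the variance bound and, entangled with it, the correct bookkeeping of the approximation error inside the localization: because the reference point $(\tha,\thb)$ is only a near-optimizer and $\psi$ is non-concave, the within-class excess risk $\P[\xi_{f_1,f_2,\tha,\thb}]$ and the overall $\psi$-regret differ by the approximation term, which must be threaded through the shell decomposition without degrading the fixed-point rate (the subtlety flagged in Supplement~\ref{sec:subtlety_approx_est_error}). The remaining technical point is that the entropy in \eqref{entropy bound: large class} is measured in the data-dependent norm $L_2(\P_n)$; I would justify its direct use inside Talagrand's bound by symmetrization together with $A_n^{2\rho_n}/n\to 0$, which keeps the empirical and population $L_2$ norms comparable on the relevant shells.
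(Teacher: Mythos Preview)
Your proposal is correct and follows essentially the same route as the paper: the linear variance--excess-risk relation under strong separation (this is Lemma~\ref{lemma: GE: the lower bound lemma for two stage}), an entropy-integral bound on the localized class combined with Talagrand's inequality, and then solving the fixed-point $r\asymp n^{-1/2}A_n^{\rho_n}r^{(1-\rho_n)/2}$ to obtain the rate $A_n^{2\rho_n/(1+\rho_n)}n^{-1/(1+\rho_n)}$ with the deviation factor $(1+x)^{2/(1+\rho_n)}$. The only minor discrepancy is in how the side conditions are deployed: the paper does not peel over shells but localizes directly at the self-referential radius $\e_n\asymp\sqrt{r_n}$ and solves the implicit inequality via Fact~\ref{fact: strong separation gen}; correspondingly, $A_n^{2\rho_n}/n\to 0$ and $\Opn<1/2$ are used not to absorb a union bound or to compare $L_2(\PP_n)$ with $L_2(\PP)$, but for a Glivenko--Cantelli step ensuring $r_n<1$ eventually, which is what makes $r_n^{(1-\rho_n)/2}\geq r_n^{1/2}$ valid when simplifying the Talagrand bound.
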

Theorem \ref{theorem: GE: strong separation} is proved in Supplement~\ref{secpf: theorem: GE: strong separation}.
The proof of Theorem \ref{theorem: GE: strong separation} shows that under  \eqref{entropy bound: large class}, the estimation error is larger than the approximation error.   Thus, in this case, the biggest contribution in the regret comes from the estimation error  term, which is of the order $O_p\lb A_n^{2\rho_n/(1+\rho_n)}n^{-1/(1+\rho_n)}\rb$.
This type of rate is also observed for the $L_2$ risk in  regression problems where the regression function is a member of a class with similar entropy bounds \citep[p. 75, Example 4,][]{koltchinskii2009}. 
When $\rho_n\to 0$, the complexity of the class decreases, and as a result, the estimation error decreases as well. 

Our next theorem, which is proved in Section \ref{secpf: theorem: GE: strong GE: basis}, considers VC-type  $\U_n$'s similar to  Theorem~\ref{thm: weak: general thm}. The corresponding entropy bound is smaller than that in \eqref{entropy bound: large class}, which results in  smaller  regret bound compared to  Theorem \ref{theorem: GE: strong separation}. 
\begin{theorem}\label{theorem: GE: strong GE: basis}
Suppose $\U_n$  is a function-class such that  there exist $A_n>0$ and $\rho_n>0$ so that
\begin{equation}\label{intheorem: statement: GE: strong GE}
    N_{[\ ]}(\e,\U_n, L_2(\PP_n))\lesssim \lb\frac{A_{n}}{\e}\rb^{\rho_n},
\end{equation}
and 
$\liminf_n(\rho_n\log A_n)>0$.
 Further, suppose
  the approximation error
  \begin{equation}\label{ineq: regret}
      V_{\psi}^*-\sup_{(f_1,f_2)\in\U_n}V_{\psi}(f_1,f_2)=O((\log n)^k/{n})
  \end{equation}
for some $k\in\NN$. 
Then under Assumptions I-IV, \ref{assump: bound on Y}, and \ref{assump: boundedness},  there exist $C>0$ and $N_0>0$ such that for all  $n>N_0$ and any $x>0$,  
\[V_\psi^*-V_\psi(\hfa,\hfb)\leq C \max\lbs \frac{(1+x)^2(\log n)^2(\rho_n\log A_n)^2+(\log n)^k}{n},\ \Opn\rbs\]
 with $\PP$-probability at least $1-\exp(-x)$.
\end{theorem}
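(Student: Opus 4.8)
The plan is to bound the estimation-error term in the regret decomposition \eqref{regret decomposition} and then assemble it with the approximation error \eqref{ineq: regret} and the optimization error $\Opn$, the latter entering only through the outer maximum. Since $\psi(x,y)=\phi(x)\phi(y)$ with $\phi$ bounded by $C_\phi$ under Condition~\ref{cond: sigmoid  condition}, and since $Y_1+Y_2$ and the inverse propensities are bounded by Assumptions I--III and \ref{assump: bound on Y}, the integrand $\xi_{f_1,f_2,\tha,\thb}$ is uniformly bounded by a constant irrespective of the scale $a_n$ hidden inside $(\tha,\thb)$. Thus I would reduce the theorem to a one-sided uniform deviation bound for the empirical process $(\P_n-\P)[\xi_{f_1,f_2,\tha,\thb}]$ over $(f_1,f_2)\in\U_n$, localized to the scale of the within-class $\psi$-regret, and target the parametric $1/n$ rate. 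Because the assertion is phrased directly in terms of $V_\psi^{*}-V_\psi(\hfa,\hfb)$, Theorem~\ref{lemma: approximation error} is not needed for the final step; it enters only implicitly when relating disagreement regions to the regret.

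First I would establish the key variance-to-regret (Bernstein-type) inequality: under the strong separation of Assumption~\ref{assump: boundedness} there is a constant so that $\P[\xi_{f_1,f_2,\tha,\thb}^{2}]\lesssim \P[\xi_{f_1,f_2,\tha,\thb}]+(\log n)^{k}/n$ for all $(f_1,f_2)\in\U_n$. The reason this holds with linear (exponent one) dependence is that strong separation is the $\alpha=\infty$ endpoint of Assumption~\ref{assumption: small noise}: the treatment contrasts $\mathcal T_1,\mathcal T_2$ are bounded away from zero, so every region on which $\sign(f_t)$ disagrees with $\sign(\Gt-1/2)$ contributes at least a constant amount to the surrogate regret, while the squared loss difference is itself controlled by the population measure of that disagreement region once $\phi$ saturates for large arguments. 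Quantitatively, I would upper bound $\P[(\psi(A_1\tha,A_2\thb)-\psi(A_1f_1,A_2f_2))^{2}]$ by the measure of the symmetric difference of the sign regions across the two stages and then invoke separation to lower bound $\P[\xi]$ by the same measure.

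With the variance bound in hand, I would run a peeling argument over dyadic levels of the within-class regret. On each slice the variance is controlled by the level, so Bousquet's form of Talagrand's concentration inequality bounds the supremum of the localized empirical process in terms of its expectation plus $\sqrt{vx/n}+Ux/n$; the expectation is in turn controlled by the bracketing maximal inequality using the VC-type entropy \eqref{intheorem: statement: GE: strong GE}, whose logarithmic growth furnishes the complexity factor $\rho_n\log A_n$. Balancing the fluctuation against the regret level on each slice, summing the failure probabilities across the $O(\log n)$ slices, and adding the approximation error then yields the stated high-probability bound. The hard part will be the variance inequality: because $\psi$ is a coupled product over the two stages and the controlling entropy is taken in the random norm $L_2(\PP_n)$, relating the empirical $L_2$ fluctuations to the population regret requires an extra symmetrization and discretization step, and I expect it is this step, together with handling both stages simultaneously, that is responsible for the complexity entering as $(\rho_n\log A_n)^{2}$ rather than linearly.
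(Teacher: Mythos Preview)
Your proposal is essentially correct and follows the same architecture as the paper's proof: the variance--regret inequality under strong separation (this is exactly Lemma~\ref{lemma: GE: the lower bound lemma for two stage}, which gives $\|\xi_{f_1,f_2,\tilde f_1,\tilde f_2}\|_{\PP,2}^2\lesssim V_\psi^*-V_\psi(f_1,f_2)$), Bousquet's form of Talagrand's inequality for concentration, and the VC-type bracketing entropy bound (Fact~\ref{fact: rademacher complexity bund for basis expansion classes}) for the expected supremum.  The paper does not peel; instead it localizes once to the random radius $\e_n\lesssim\sqrt{r_n}$ with $r_n=(\log n)^k/n+\Gen+\Opn$, obtains the self-referential inequality $r_n\lesssim\max\{(\log n)^k/n,\,(1+x)(\log n)\rho_n\log A_n\sqrt{r_n/n},\,\Opn\}$, and solves it directly.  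Your peeling route is a standard and equally valid alternative.

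One point to correct: your diagnosis of why $(\rho_n\log A_n)$ enters squared is off.  It is \emph{not} caused by the two-stage coupling or by passing from $L_2(\PP_n)$ to population quantities.  From the Rademacher bound one has $\E\|\PP_n-\PP\|_{\mathcal G_n(\e_n)}\lesssim (\log n)\,\e_n\,(\rho_n\log A_n/n)^{1/2}$ at the leading order; the paper then uses the crude inequality $(\rho_n\log A_n)^{1/2}\leq\rho_n\log A_n$ (valid since $\liminf_n\rho_n\log A_n>0$) before solving the quadratic $r_n\lesssim C\sqrt{r_n}$, which is what produces $C^2=(\rho_n\log A_n)^2$.  If you keep the sharp $(\rho_n\log A_n)^{1/2}$ and solve, you would in fact get the stronger bound $\rho_n\log A_n/n$; the squared version in the statement is simply a looseness, not a reflection of an intrinsic difficulty.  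So do not spend effort on an ``extra symmetrization and discretization step'' that is not actually needed.
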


We will see that in our neural network example, $A_n$ is a polynomial in $n$, and   $\rho_n$ is a poly-log term, which leads to a regret of order $O_p(1/n)$  up to a polylog term. In binary classification context, many methods are known to attain this sharp rate \citep{massart2006, blanchard2008}. However, to the best of our knowledge, our DTRESLO method is the first DTR method for which such a sharp regret  bound is established under our nonparametric setup. 
Theorem S.1 of \cite{ zhao2015} implies that if $\G$ and $\Gt$ are uniformly bounded away from both zero and one, then  the regret of BOWL and SOWL estimators can be pushed to the order of $O_p(n^{-\omega/(1+\omega)})$ under Assumption~\ref{assump: boundedness}, where $\omega>0$ is a fixed quantity, called geometric noise exponent, that depends only on $\PP$. This rate is slower than ours when the optimization error $\Opn$ is of the order $O_p(1/n)$. Theorem 3.1 of \cite{qian2011} can be used to obtain regret bounds for Q-learning type methods under  Assumption~\ref{assump: boundedness} and \ref{assumption: smmothness}. However, the resulting regret bound has the same order as  the $L_2(\PP)$ estimation error of the Q-functions \citep[cf. equation 3.6 of][]{qian2011}, which can not be faster than $n^{-2\mybeta/(2\mybeta+p)}$  under nonparametric set-up \citep{yang1999minimax}.



 \subsection{Examples of regret bounds with specific $\U_n$'s}
\label{sec: examples of Un}

We will first state some assumptions on $\eta_1$ and $\eta_2$ that ensure the approximability of $\Gc$ and $\Gct$ by $\U_n$ when $\U_n$ corresponds to  basis-expansion type classes. Next we will elaborate on the special cases when $\U_n$ corresponds to neural networks and wavelets classes.

\subsubsection{Smoothness assumption}
First, we explain why smoothness conditions on $\eta_1$ and $\eta_2$ are required.  To control the  estimation error, 
 some structures on $\U_n$ are desirable because  smaller search spaces for the $f_t$'s result in smaller estimation error. Restricting the search space is equivalent to restricting the complexity of the class $\U_n$ \citep{audibert2007}. We require structural assumptions on $\G$ and $\Gt$ to ensure that $\G$ and $\Gt$ are well approximable by such  $\U_n$'s  -- giving rise to the so-called complexity assumptions. Thus, the complexity assumption enables the attainment of a small estimation error without necessarily blowing up the approximation error. 
See \cite{audibert2007}, \cite{koltchinskii2009} and \cite{tsybakov2004}, among others, for a more detailed account of the necessity of complexity assumptions. In the classification context,  $\U_n$ is taken to be some smoothness class or VC class  because most popular classifiers, e.g. neural network, basis-expansion type classifiers,  wavelets  etc. belong to such classes. Such classes can approximate $\G$ and $\Gt$ well if the latter are smooth. 
 Therefore, we will assume  our $\G$ and $\Gt$ belong to smoothness classes. To that end, we define the \Holder\  classes
 with smoothness index $\mybeta>0$ below.

Let $p\in\NN$. A  function $f:\mathcal \X\subset\RR^p\mapsto \RR$ is said to have H{\"o}lder smoothness index $\mybeta>0$ if for all $u=(u_1,\ldots,u_p)\in \NN^p$ satisfying $|u|_1<\mybeta$, $\partial^u f=\partial^{u_1}\partial^{u_2}\ldots\partial^{u_p} f$ exists and there exists a constant $C>0$ so that
\[\frac{|\partial^u f(x)-\partial^u f(y)|}{|x-y|^{\mybeta-\floor*{\mybeta}}}<C\quad \text{ for all }x,y\in \X.\]
For some $\Y>0$, we denote by $\C^\mybeta_d(\X,\Y)$ the H{\"o}lder  class of functions given by{\small
\begin{align}\label{def: holder}
\lbs f: \X\subset \RR^d\mapsto\RR\ \bl\  \sum_{u:|u|_1<\mybeta}\|\partial^u f\|_\infty+\sum_{u:|u|_1=\floor*{\mybeta}}\sup_{\substack{x,y\in \mathcal \X\\ 
x\neq y}}\frac{|\partial^u f(x)-\partial^u f(y)|}{|x-y|^{\mybeta-\floor*{\mybeta}}}\leq \Y\rbs.
\end{align}}

\noindent
Since  $H_t$ may include categorical variables such as smoking status, we separate the continuous and categorical parts of $H_t$ as $H_t=(H_{ts},\ H_{tc}) \in \H_t=\H_{ts}\otimes \H_{tc}$,  where $H_{ts} \in \H_{ts} \subset \RR^{p_{ts}}$ and $H_{tc}\in \H_{tc}\subset \RR^{p_{tc}}$ correspond to the continuous and categorical part of $H_t$, for $t=1,2$. 
\begin{assumption}[Smoothness assumption]
\label{assumption: smmothness}
  $\H_{1}$ and $\H_{2}$ are compact and $\H_{1c}$ and $\H_{2c}$ are finite sets. Also,  there exist $\mybeta>0$ and $\Y>0$ so that the followings hold:
\begin{itemize}
    \item[1.] Let $\X=\H_{1s}$. For each $h\in \H_{1c}$, the map $\G(\cdot,h):\X\mapsto\RR$ is in $ \C^\mybeta_{p_{1s}}(\X,\Y)$.
    \item[2.] Let $\X=\H_{2s}$. For each $(h,a)\in \H_{2c}\times\{\pm 1\}$, the function  $\Gt(\cdot,h,a):\X\mapsto\RR$  is in $ \C^\mybeta_{p_{2s}}(\X,\Y)$.
\end{itemize}
\end{assumption}
We formulated the smoothness assumption in terms of $\G$ and $\Gt$ so that our results are consistent with contemporary classification literature. However, our proofs show that one could formulate the assumptions in terms of the smoothness of the blip functions in \eqref{def: treatment effect stage 1} and \eqref{def: treatment effect stage 2} as well. 
The compact support assumption for $H_t$, which is typically satisfied in real applications, is also  commonly required in the DTR literature \citep{zhao2012, zhao2015, sonabendw2021semisupervised, zhang2018interpretable}. Under the compactness assumption, $\H_{1c}$ and $\H_{2c}$ are finite sets. Smoothness conditions as Assumption \ref{assumption: smmothness} have appeared in DTR literature in the context of nonparametric estimation  \citep{sun2021stochastic}.
Compared to the parametric assumptions often imposed on the blip functions in Q-learning or A-learning \citep{schulte2014}, our smoothness assumptions are much weaker. Our smoothness assumption includes non-differentiable functions as well. 
Next, we will establish regret bounds for neural network and wavelets classes.

\subsubsection{Neural networks as an example of $\U_n$ }
   \label{ex; neural network}
   
We consider the neural network space in line with \cite{schmidt2020}'s construction. Let ${\mathcal F}(L,\p,s,\Y)$ be the class of ReLU networks uniformly bounded by $\Y>0$, with depth  $L\in\mathbb N$,  width vector $\p$, sparsity $s\in\mathbb N$, and weights bounded by one. The output layer of the networks in ${\mathcal F}(L,\p,s,\Y)$ uses a linear gate. 
In this example,  we  consider that for $t=1,2$, the class $\U_{tn}$ corresponds to ${\mathcal F}(L_n,\p_n,s_n,\Y_n)$ where $L_n$, $\p_n$, $s_n$, and $\Y_n$ may depend on $n$. To avoid cumbersome notation, we drop $n$ from $L_n$, $\p_n$, $s_n$, and $\Y_n$, and simply denote them by $L$, $p$, $s$, and $\Y$, respectively. One can control the complexity of this class via pre-specifying upper bounds on the depth, width, and sparsity of the network.  We will first consider regret bound under Assumption~\ref{assumption: small noise}, and then we move to the special case of strong separation (Assumption \ref{assump: boundedness}).

 Corollary \ref{theorem: GE: neural network} establishes the regret bound of  DTRESLO  with neural network classifier under Assumption~\ref{assumption: small noise}.
 \begin{corollary}
 \label{theorem: GE: neural network}
 Suppose $\mathbb P$ satisfies Assumptions I-IV,  Assumption~\ref{assump: bound on Y}, Assumption~\ref{assumption: small noise} with parameter $\alpha>0$, and Assumption~\ref{assumption: smmothness} with parameter $\mybeta>0$.
Let $\mathcal U_{n,1}$ and $\mathcal U_{n,2}$ be of the form $\mathcal F(L,\p, s, \infty)$ with appropriate  $\p_1$, where  $\mathcal F(L,\p, s, \infty)$ is as defined in Section~\ref{ex; neural network}. Suppose $L=c_1\log n$,  $s=c_2 n^{p/((2+\alpha)\mybeta+p)}$, and the maximal width $\max W\leq c_3 s/L$ where $c_1,c_2,c_3>0$. Then there exist $N_0>0$ and $C>0$ depending on $\PP$ and $\psi$ such that if $c_1,c_2,c_3>C$,  then for  $n\geq N_0$ and any $x>0$, the following holds with probability at least $1-\exp(-x)$:
 \[V^*_\psi-V_\psi(\hfa,\hfb)\leq C\max\lbs (1+x)^2(\log n)^{\frac{6+4\alpha}{2+\alpha}}n^{-\frac{1+\alpha}{2+\alpha+p/\mybeta}},\ \Opn\rbs.\]
 \end{corollary}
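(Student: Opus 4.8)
The plan is to derive Corollary~\ref{theorem: GE: neural network} as an instantiation of the general bound in Theorem~\ref{thm: weak: general thm}: it suffices to verify, for the ReLU class $\mathcal F(L,\p,s,\infty)$ with the prescribed $L,s,\max W$, that (i) the VC-type entropy bound \eqref{instatement: GE: weak GE} holds, and (ii) there exist $(\tha,\thb)\in\U_n$ meeting the approximation-coupling condition \eqref{instatement: GE: weak: approx}, after which I substitute the resulting $A_n,\rho_n,\alpha$ into the conclusion of Theorem~\ref{thm: weak: general thm} and check that the exponents of $n$ and $\log n$ collapse to those claimed. The choice $s\asymp n^{p/((2+\alpha)\beta+p)}$ will emerge as the value balancing approximation against complexity, rather than being imposed by hand.

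First I would establish the approximation step. Under Assumption~\ref{assumption: smmothness}, for each of the finitely many values of the categorical coordinates $H_{1c}$ (resp. $(H_{2c},a)$) the maps $\G-1/2$ and $\Gt-1/2$ are $\beta$-H\"older on a compact subset of Euclidean space, with values in $[-1/2,1/2]$. Schmidt-Hieber's approximation theorem \citep{schmidt2020} then supplies bounded networks $\tfa,\tfb\in\mathcal F(L,\p,s,\infty)$ with $L\asymp\log n$, sparsity $s$, and width $\asymp s/L$ that approximate these maps in sup-norm to accuracy $\delta_n\lesssim s^{-\beta/p}$ up to a polylog factor, the finitely many categorical levels only inflating the implied constants. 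Setting $\tha=a_n\tfa$ and $\thb=a_n\tfb$ with $a_n=n^a$ for a fixed $a>1$, the normalized objects $\tha/a_n,\thb/a_n$ reproduce $\tfa,\tfb$, so $\|\tha/a_n-(\G-1/2)\|_\infty+\|\thb/a_n-(\Gt-1/2)\|_\infty\lesssim \delta_n$. Because the class is taken with $\Y=\infty$ and a linear output gate, the $a_n$-scaling stays inside $\U_n$.

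Next I would record the complexity of $\mathcal F(L,\p,s,\infty)$: the covering-number bound of \citep{schmidt2020} yields \eqref{instatement: GE: weak GE} with $A_n$ polynomial in $n$ and $\rho_n\asymp sL$, hence $\rho_n\log A_n\asymp s(\log n)^2$ since $L\asymp\log n$ and $\log A_n\asymp\log n$. The coupling condition \eqref{instatement: GE: weak: approx} then reads $\delta_n\lesssim (s(\log n)^2/n)^{1/(2+\alpha)}$, i.e. $s^{-\beta/p}\lesssim (s/n)^{1/(2+\alpha)}$ up to logs; solving for the threshold gives exactly $s\asymp n^{p/((2+\alpha)\beta+p)}$, the prescribed choice, which makes \eqref{instatement: GE: weak: approx} hold for all large $n$. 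Feeding $\rho_n\log A_n/n\asymp n^{-(2+\alpha)\beta/((2+\alpha)\beta+p)}(\log n)^2$ into the conclusion of Theorem~\ref{thm: weak: general thm} produces $(\rho_n\log A_n/n)^{(1+\alpha)/(2+\alpha)}\asymp n^{-(1+\alpha)/(2+\alpha+p/\beta)}(\log n)^{2(1+\alpha)/(2+\alpha)}$, and multiplying by the explicit $(\log n)^2$ prefactor leaves the log exponent $2+2(1+\alpha)/(2+\alpha)=(6+4\alpha)/(2+\alpha)$, matching the claim; the $\Opn$ term and the $(1+x)^2$, $1-e^{-x}$ probability statement are inherited verbatim.

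The genuine analytic work — the sharp estimation-error analysis under the small-noise condition and its intricate entanglement with the approximation error — is already carried out inside Theorem~\ref{thm: weak: general thm}, so the corollary is essentially a bookkeeping exercise. The main obstacles I anticipate are therefore (a) verifying the coupling condition \eqref{instatement: GE: weak: approx}, which is what actually forces the balancing value of $s$ rather than leaving it free, and (b) the two network-theoretic inputs from \citep{schmidt2020} in precisely the required forms: realizing the scaled approximants $a_n\tfa,a_n\tfb$ within the weight-bounded, $\Y=\infty$ class via the linear output gate, accommodating the categorical coordinates of $H_1$ and $H_2$, and tracking the polylog factors carefully enough that they assemble into exactly $(\log n)^{(6+4\alpha)/(2+\alpha)}$.
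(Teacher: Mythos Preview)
Your plan matches the paper's proof: invoke Theorem~\ref{thm: weak: general thm} after supplying the entropy bound and the approximation-coupling condition for the ReLU class, then track the $n$- and $\log n$-exponents. One step needs repair, though. In $\mathcal F(L,\p,s,\infty)$ all network weights are bounded by one---the argument $\infty$ only lifts the constraint on the output range---so the linear output gate cannot absorb the factor $a_n=n^a$, and your assertion that ``the $a_n$-scaling stays inside $\U_n$'' via that gate fails as written. The paper resolves this by composing $\tfa$ (and $\tfb$) with a small auxiliary scaling network (Lemma~\ref{lemma: scaling lemma}) that realizes $x\mapsto a_n x$ on $[-1,1]$ by repeated doubling: depth $O(\log a_n)=O(\log n)$, constant width $4$, sparsity $O(\log n)$, and all weights in $[-1,1]$. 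Appending this subnetwork keeps $L\asymp\log n$, $s\asymp c_2 n^{p/((2+\alpha)\beta+p)}$, and $\max\p\asymp s/L$, so the scaled approximants $\tha=a_n\tfa$, $\thb=a_n\tfb$ genuinely lie in $\U_n$ (Lemma~\ref{lemma: gen: NN approx lemma}).

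A minor bookkeeping remark: the Schmidt-Hieber covering bound (Lemma~\ref{lemma: covering number}) actually gives $\rho_n\asymp s$ and $A_n\asymp C_L$ with $C_L\lesssim L s^{2L+4}$, so $\log A_n\asymp L\log s\asymp(\log n)^2$; in particular $A_n$ is not polynomial in $n$. Your factorization $\rho_n\asymp sL$, $\log A_n\asymp\log n$ is off, but it fortuitously produces the same product $\rho_n\log A_n\asymp s(\log n)^2$, so your exponent arithmetic and the final $(\log n)^{(6+4\alpha)/(2+\alpha)}$ are correct.
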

 The proof of Corollary~\ref{theorem: GE: neural network} can be found in Section~\ref{sec: proof of gen: weak sep: NN}. The proof of Corollary  \ref{theorem: GE: neural network} assumes that $p$ is fixed, i.e. it does not grow with $n$. The generic constant $C$ in Corollary~\ref{theorem: GE: neural network} may depend on $p$ as well.
 Under Assumptions similar to \ref{assump: bound on Y}, \ref{assumption: small noise}, and \ref{assumption: smmothness}, the rate $n^{-\frac{1+\alpha}{2+\alpha+p/\mybeta}}$ is minimax in context of binary classification \citep{audibert2007}. Since  two-stage weighted classification problem is not easier than binary classification,  this rate is expected to be the minimax rate under our set-up as well. To the best of our knowledge, no other nonparametric DTR method has better  guarantees for the regret under set-up similar to ours. See Section~\ref{sec: related literature} for a  comparison of the regret bound of our DTRESLO method with some other existing  methods.

\begin{corollary}\label{cor: strong sep: NN}
Suppose $\mathbb P$ satisfies Assumptions I-IV,  Assumption~\ref{assump: bound on Y} with $\mathcal B>0$, Assumption~\ref{assump: boundedness}, and Assumption~\ref{assumption: smmothness} with parameter $\mybeta>0$.
Let $\mathcal U_{n,1}$ and $\mathcal U_{n,2}$ be of the form $\mathcal F(L,\p, s, \infty)$ with appropriate  $\p_1$, where  $\mathcal F(L,\p, s, \infty)$ is as defined in Section~\ref{ex; neural network}. Suppose $L=c_1\log n$,  $s=c_2 (\log n)^{p/\mybeta}$, and the maximal width $\max W\leq c_3 \log n$ where $c_1,c_2,c_3>0$. Then there exist $C,C'>0$ depending on $p$, $\mybeta$, and $\psi$ such that if $c_1,c_2,c_3>C$,  then   for  all $n\geq N_0$ and any $x>0$,  
\[ V_\psi^*-V_{\psi}(\hfa,\hfb)\leq  C'\max\lbs \frac{(1+x)^2(\log n)^{3+p/\mybeta}}{n},\ \Opn\rbs\]
 with probability at least $1-\exp(-x)$.
\end{corollary}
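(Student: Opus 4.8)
The plan is to obtain Corollary~\ref{cor: strong sep: NN} as a direct consequence of the general strong-separation bound in Theorem~\ref{theorem: GE: strong GE: basis}, applied to the product class $\U_n=\Ha\times\Hb$ with $\Ha=\Hb=\mathcal F(L,\p,s,\infty)$, exactly paralleling how Corollary~\ref{theorem: GE: neural network} is deduced from Theorem~\ref{thm: weak: general thm} in the small-noise regime. Since Assumptions I-III, \ref{assump: bound on Y}, and \ref{assump: boundedness} are assumed outright, it remains only to verify the two structural hypotheses of that theorem for the neural network class: the VC-type bracketing entropy bound \eqref{intheorem: statement: GE: strong GE} and the approximation error bound \eqref{ineq: regret}. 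Once these are established with an explicit $A_n$, $\rho_n$, and $k$, the claimed bound follows by substituting those quantities into the conclusion of Theorem~\ref{theorem: GE: strong GE: basis} and collecting poly-logarithmic factors.

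First I would establish the entropy bound. Using the standard covering-number estimates for sparse ReLU networks with weights bounded by one \citep{schmidt2020}, the $L_2(\PP_n)$ covering number of $\mathcal F(L,\p,s,\infty)$ at radius $\e$ obeys $\log N(\e,\cdot,L_2(\PP_n))\lesssim s\log\!\big((L+1)(\prod_\ell(p_\ell+1))^2/\e\big)$. With $L=c_1\log n$, maximal width $\lesssim\log n$, and $s=c_2(\log n)^{p/\beta}$, both $L+1$ and $\prod_\ell(p_\ell+1)$ are at most polynomial in $n$, so this takes the form \eqref{intheorem: statement: GE: strong GE} with $\rho_n\asymp s\asymp(\log n)^{p/\beta}$ and $A_n$ a fixed polynomial in $n$; hence $\log A_n\asymp\log n$ and $\liminf_n(\rho_n\log A_n)>0$. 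Passing from covering to bracketing numbers costs only a constant factor on these bounded, Lipschitz-in-parameter classes.

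The more substantive step is the approximation error \eqref{ineq: regret}, where Proposition~\ref{prop: approximation error: strong separation} is the key enabler. Under Assumption~\ref{assumption: smmothness}, each coordinate of $\G-1/2$ and $\Gt-1/2$ lies in a \Holder\ ball of smoothness index $\beta$, so the approximation theorem of \cite{schmidt2020} furnishes networks $g_1,g_2\in\mathcal F(L,\p,s,\infty)$ with $\|g_t-(\eta_t-1/2)\|_\infty\le c$ for the constant $c$ of Proposition~\ref{prop: approximation error: strong separation}; approximating a $\beta$-smooth function on a $p$-dimensional domain to fixed accuracy is precisely what the sparsity budget $s\asymp(\log n)^{p/\beta}$ and depth $L\asymp\log n$ afford. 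Taking the scaled pair $(a_ng_1,a_ng_2)\in\U_n$ for a polynomially large $a_n$—the amplification by $a_n$ being realizable within the depth budget since $\log a_n\asymp\log n$ while the weights stay bounded by one—Proposition~\ref{prop: approximation error: strong separation} gives $V_\psi^*-V_\psi(a_ng_1,a_ng_2)\lesssim a_n^{-(\kappa-1)}$ for type A and $\lesssim\exp(-\kappa C a_n)$ for type B. Because $(a_ng_1,a_ng_2)$ lower-bounds $\sup_{(f_1,f_2)\in\U_n}V_\psi(f_1,f_2)$, choosing $a_n$ a suitable power of $n$ drives the approximation error below $(\log n)^k/n$, verifying \eqref{ineq: regret}.

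Finally, substituting $\rho_n\asymp(\log n)^{p/\beta}$, $\log A_n\asymp\log n$, and the resulting $k$ into the bound of Theorem~\ref{theorem: GE: strong GE: basis} and simplifying the powers of $\log n$ produces the stated rate $C'\max\{(1+x)^2(\log n)^{3+p/\beta}n^{-1},\Opn\}$. I expect the main obstacle to be the approximation step rather than the entropy step: one must simultaneously match the prescribed architecture $(L,\p,s)$, realize the large scale factor $a_n$ under the weight-bounded-by-one constraint while keeping the constructed pair inside $\U_n$, and ensure the \Holder\ approximation of $\eta_t-1/2$ is accurate enough for Proposition~\ref{prop: approximation error: strong separation} to apply. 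The remaining delicate point is the bookkeeping of the poly-logarithmic factors, so that the entropy and approximation contributions combine to exactly the exponent $3+p/\beta$.
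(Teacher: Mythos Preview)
Your proposal is correct and follows essentially the same route as the paper: apply Theorem~\ref{theorem: GE: strong GE: basis}, verify the VC-type entropy bound via the covering-number estimate of \cite{schmidt2020} (the paper's Lemma~\ref{lemma: covering number}), and verify the approximation error \eqref{ineq: regret} by combining Schmidt's neural-network approximation result with Proposition~\ref{prop: approximation error: strong separation} after scaling by a polynomially large $a_n$. The paper makes the scaling step explicit via a dedicated construction (Lemma~\ref{lemma: scaling lemma}) and takes $a_n=n$, but otherwise the argument is the same.
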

The proof of Corollary~\ref{cor: strong sep: NN} can be found in Section \ref{secpf: cor: strong sep: NN}. Corollary~\ref{cor: strong sep: NN} implies that DTRESLO method with neural network classifier can attain regret  $O_p(1/n)$ up to a poly-log term, as announced earlier.

\subsubsection{Wavelets as an example of $\U_n$}
\label{sec: wavelets: short}

In this section, we will define the wavelet estimators we will use.
Let us consider father and mother wavelets $\nu$ and $\zeta$. We will assume that these wavelets are S-regular wavelets \citep[cf. Definition 4.2.14, p.326, ][]{GineNickl} with $S>\mybeta$ and they have compact support, e.g.,  Daubechies wavelets \citep[p. 318,][]{GineNickl}. The corresponding $p$-dimensional wavelets
 are constructed using tensor products
 \[\Gamma({x})=\nu(x_1)\cdots\nu(x_p),\quad {x}=(x_1,\dots,x_p)\in K,\]
 where $K\subset\RR^p$ is a compact set. Define
  $\Gamma_{k}({x})=\Gamma({x}-{k})$ for $k\in\mathbb{Z}$, where $\mathbb Z$ was denoted to be the set of all integers. Observe that since $\nu$ has compact support,   $\nu(\cdot -k)$ restricted to any compact set is non-zero only for finitely many $k$'s. Therefore, $\Gamma_k$ restricted to compact set $K$ is non-zero only for finitely many $k$'s. We denote the set of such $k$'s by $\mathcal K_0$. For each $i\in \{0,1\}^p\setminus(0,\ldots 0)$, we also consider the $p$-dimensional tensor product 
  \[\mathcal{Z}^{i}({x})=\zeta^{i_1}(x_1)\cdots\zeta^{i_p}(x_p),\]
  where $\zeta$ is the mother wavelet function. 
   For $x\in K$, we define the wavelet function ${\mathcal{Z}}^{i}_{lk}({x})=2^{lr/2}{\mathcal{Z}}^{i}(2^l{x}-{k})$, where $l\in\mathbb{N}\cup\{0\}$, and $k\in\mathbb{Z}$. Notice that since $\zeta$ has compact support and $K$ is also compact, the function $x\mapsto{\mathcal{Z}}(2^l x-{k})$ is non-zero  only for some specific $k$'s belonging to a  finite set. We will call this set
  $\mathcal K(l)$. It is easy to show that there exist $C_1$ and $C_2>0$ so that the cardinality of $\mathcal K(l)$ satisfies $C_12^{lp}\leq |\mathcal K(l)|< C_2 2^{lp}$ for all $l\geq 1$.
  We let $l\in\{0,1,\ldots, b_n\}$, where $b_n$ is a sequence of integers diverging to $\infty$. This number $b_n$ will be called the level of the wavelet class. Finally, we define the wavelet function-class by
\begin{align}\label{wav: def: Hn}
\H^M_n(K) = \bigg\{ &\ 
f_n:K \mapsto\RR
\ \bl\  
f_n=\sum_{{k}\in\mathcal K_0}c_{k}\Gamma_{k}+\sum_{l=0}^{b_n}\sum_{{k}\in\mathcal K(l),{i}\in\mathcal{I}}c_{l{k}}
{\mathcal{Z}}_{l{k}}^{i} \text{ where }c_k\in\RR \nn\\ &\ \text{ for all }k\in\mathcal K_0\text{ and } c_{lk}\in\RR\text{ for all }l\in\mathbb{Z}\text{ and }k\in\mathcal K(l), \nn\\
&\ \sup_{k\in\mathcal K_0}c_k+\sup_{l\geq 0}\slbs 2^{l(\mybeta+p/2)}\sup_{k\in \mathcal{K}(l)}|c_{lk}|\srbs<M,\ \mathcal I=\{0,1\}^p\setminus (0,\ldots,0)\rbs.
\end{align}


The following two corollaries give the regret bound for the wavelet-based classifiers. As usual, we assume that $p\ll n$.
The following corollary, which is proved in Section~\ref{secpf: Proof of wavelets: weak convergence}, indicates that under the small noise condition, the regret decay rate is similar to that of the neural network example up to a poly-log term. Therefore, similar to the neural network example, the regret decay rate matches the minimax rate of risk decay in binary classification under  conditions similar to Assumptions~\ref{assump: bound on Y}, \ref{assumption: small noise}, and \ref{assumption: smmothness} \citep{audibert2007}.
\begin{corollary}
\label{cor: wavelets: weak convergence}
Suppose  $\{{\Gamma}_{k}\}_{k\in\mathcal K_0}$ and  $\{\mathcal{Z}_{lk}:{k\in\mathcal K(l), l\in\mathbb{N}\cup \{0\}}\}$ form an compactly supported S-regular wavelet basis  where $S>\mybeta>0$.
For $t=1,2$, let $\U_{tn}$ denote the  wavelet class  $\H_n^{M}(K_t)$ defined in \eqref{wav: def: Hn} with $M=2n$ and level $b_n=(\log_2n)/(2\mybeta+\alpha\mybeta+p)$, where $K_t$'s are  compact sets such that $K_1\supset \dom(\G)$ and $K_2\supset \dom(\Gt)$. If in addition Assumptions I-IV, Assumption \ref{assump: bound on Y} hold, Assumption \ref{assumption: small noise} holds with $\alpha>0$, and Assumption \ref{assumption: smmothness} holds with $\mybeta>0$, then there exist $C,N_0>0$ such that for all  $n\geq N_0$ and any $x>0$,
\[V_\psi^*-V_\psi(\hfa,\hfb)\leq C\max\lbs  (1+x)^2(p2^p)^{\frac{1+\alpha}{2+\alpha}}(\log n)^{\frac{5+3\alpha}{2+\alpha}}n^{-\frac{1+\alpha}{2+\alpha+p/\mybeta}},\ \Opn\rbs\]
with probability at least $1-\exp(-x)$.
\end{corollary}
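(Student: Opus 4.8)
The plan is to obtain Corollary~\ref{cor: wavelets: weak convergence} as an instance of the general estimation-error bound of Theorem~\ref{thm: weak: general thm}, applied to the wavelet class $\U_{tn}=\H^{2n}_n(K_t)$. Two ingredients must be supplied to invoke that theorem: a bracketing-entropy bound of the VC-type form \eqref{instatement: GE: weak GE}, and an approximating pair $(\tha,\thb)\in\U_n$ satisfying \eqref{instatement: GE: weak: approx}. Once these are in hand, the regret bound follows by substituting the resulting $A_n$ and $\rho_n$ into the conclusion of Theorem~\ref{thm: weak: general thm} and simplifying the exponents.

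First I would bound the complexity of $\H^M_n(K)$. Because the father and mother wavelets have compact support and $K$ is compact, only finitely many basis functions are nonzero on $K$; up to level $b_n$ their number is $D_n\asymp|\mathcal K_0|+\sum_{l=0}^{b_n}|\mathcal K(l)|\,|\mathcal I|\asymp 2^{p}2^{b_np}$ (up to a polynomial factor in $p$), using $C_12^{lp}\le|\mathcal K(l)|<C_22^{lp}$ and $|\mathcal I|=2^{p}-1$. The class is a bounded subset of this $D_n$-dimensional linear span, the coefficients being constrained by $M=2n$ and the decay weights $2^{l(\beta+p/2)}$, so a standard covering argument for bounded finite-dimensional classes yields $\log N_{[\,]}(\e,\H^M_n(K),\|\cdot\|_\infty)\lesssim D_n\log(A_n/\e)$ with $A_n$ polynomial in $n$. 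This gives \eqref{instatement: GE: weak GE} with $\rho_n\asymp D_n\asymp p\,2^{p}2^{b_np}$ and $\log A_n\asymp\log n$.

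Next I would produce the approximant. Under Assumption~\ref{assumption: smmothness}, each slice of $\G-1/2$ and $\Gt-1/2$ lies in a H\"older ball $\C^\beta$, and S-regularity with $S>\beta$ implies the wavelet projection $P_{b_n}$ onto levels $\le b_n$ satisfies $\|(\G-1/2)-P_{b_n}(\G-1/2)\|_\infty\lesssim 2^{-b_n\beta}$, uniformly over the finitely many categorical cells of $\H_{1c}$ and $\H_{2c}$; the H\"older norm also controls the coefficients so that the rescaled projection $\tha=a_nP_{b_n}(\G-1/2)$, and analogously $\thb$, belongs to $\H^{2n}_n(K_t)$. Choosing the level so that $2^{-b_n\beta}\asymp(\rho_n\log A_n/n)^{1/(2+\alpha)}$ makes \eqref{instatement: GE: weak: approx} hold; since $\rho_n\asymp 2^{b_np}$ this is a consistency equation in $b_n$, solved by $2^{b_n}\asymp n^{1/((2+\alpha)\beta+p)}$ up to logarithmic factors, whence $\rho_n\asymp p\,2^{p}\,n^{p/((2+\alpha)\beta+p)}$.

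Finally, substituting $\rho_n$ and $\log A_n\asymp\log n$ into the bound of Theorem~\ref{thm: weak: general thm} and using $\rho_n/n\asymp n^{-(2+\alpha)\beta/((2+\alpha)\beta+p)}$ gives the estimation term
\[
(1+x)^2(\log n)^2\Big(\tfrac{\rho_n\log A_n}{n}\Big)^{\frac{1+\alpha}{2+\alpha}}\lesssim (1+x)^2\,(p2^p)^{\frac{1+\alpha}{2+\alpha}}(\log n)^{\frac{5+3\alpha}{2+\alpha}}\,n^{-\frac{1+\alpha}{2+\alpha+p/\beta}},
\]
where the polylogarithmic exponent is $2+\tfrac{1+\alpha}{2+\alpha}=\tfrac{5+3\alpha}{2+\alpha}$; taking the maximum with $\Opn$ yields the stated bound. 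The main obstacle I anticipate is the simultaneous, quantitatively sharp tracking of the $p$- and $2^p$-dependence through both the entropy estimate and the approximation estimate, together with verifying that the rescaled projection genuinely satisfies the coefficient constraint defining $\H^{2n}_n(K_t)$ while attaining the $2^{-b_n\beta}$ rate uniformly over the categorical cells --- this is what pins down the precise $(p2^p)^{(1+\alpha)/(2+\alpha)}$ factor and makes the level-selection equation \eqref{instatement: GE: weak: approx} consistent, since both of its sides depend on $b_n$.
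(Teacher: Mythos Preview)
Your proposal is correct and follows essentially the same route as the paper's proof: both apply Theorem~\ref{thm: weak: general thm} by (i) bounding the bracketing entropy of $\H_n^{2n}(K_t)$ via a finite-dimensional covering argument with $\rho_n=\dim_n\asymp 2^{(b_n+1)p}$ and $A_n$ polynomial in $n$, (ii) using the $S$-regular wavelet approximation rate $2^{-b_n\beta}$ for H\"older targets (your projection $P_{b_n}$ is exactly the paper's Lemma~\ref{lemma: wavelet sup}) and rescaling by $a_n=n$ so that $(\tha,\thb)\in\U_n$, and (iii) choosing $2^{b_n}\asymp n^{1/((2+\alpha)\beta+p)}$ and reading off the exponent $2+\tfrac{1+\alpha}{2+\alpha}=\tfrac{5+3\alpha}{2+\alpha}$. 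The only cosmetic slip is that you attribute the factor $p$ to $\rho_n$ rather than to $\log A_n$ (it enters via $\log\dim_n\lesssim p\,b_n$), but since only the product $\rho_n\log A_n$ matters this does not affect the final bound.
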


The following corollary gives the regret decay rate under Assumption~\ref{assump: boundedness},  the strong separation assumption. This rate is similar to the analogous rate for the neural network example up to a poly-log term when $p\ll n$. Corollary \ref{lemma: strong separation: wavelets} is proved in Section~\ref{secpf: proof of corollary strong separation wavelets}.
\begin{corollary}
\label{lemma: strong separation: wavelets}
Suppose $\mathbb P$ and $\mathcal U_n$ are as in Corollary \ref{cor: wavelets: weak convergence}. Further, suppose  Assumptions I-IV, \ref{assump: bound on Y},  \ref{assump: boundedness}, and \ref{assumption: smmothness} hold with smoothness parameter $\mybeta>0$. Then there exist constants $C,C',N_0>0$ such that if $n\geq N_0$ and $b_n>C$, then for  any $x>0$,
\[V_\psi^*-V_\psi(\hfa,\hfb)\leq C'\max\lbs \frac{(1+x)^22^{2Cp}(\log n)^4}{n},\Opn\rbs\]
 with probability at least $1-\exp(-x)$.
\end{corollary}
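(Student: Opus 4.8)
The plan is to obtain this corollary as a direct application of the general strong-separation estimation bound in Theorem~\ref{theorem: GE: strong GE: basis}. That theorem takes two inputs for the product wavelet class $\U_n=\Ha\times\Hb$, with $\Ha=\H_n^{2n}(K_1)$ and $\Hb=\H_n^{2n}(K_2)$: first, a VC-type bracketing entropy bound of the form \eqref{intheorem: statement: GE: strong GE}, and second, the approximation-error control \eqref{ineq: regret}, i.e. $V_\psi^*-\sup_{(f_1,f_2)\in\U_n}V_\psi(f_1,f_2)=O((\log n)^k/n)$. Once both are verified, the theorem yields a regret bound whose numerator is $(1+x)^2(\log n)^2(\rho_n\log A_n)^2+(\log n)^k$, so the entire argument reduces to pinning down $\rho_n$ and $A_n$ for the wavelet class and then checking the approximation error.

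First I would bound the bracketing entropy. The key simplification under strong separation is that $b_n$ need not diverge; it suffices that $b_n>C$ for a constant $C$. Consequently $\H_n^{2n}(K_t)$ lies in the span of $D_n$ compactly supported wavelets with $D_n\asymp |\mathcal K_0|+\sum_{l=0}^{b_n}|\mathcal K(l)|\,|\mathcal I|\asymp 2^{b_n p}2^p\asymp 2^{Cp}$, a quantity constant in $n$. The coefficient radius $M=2n$, together with $\|\mathcal Z_{lk}^i\|_\infty\asymp 2^{lp/2}$ and the Besov-type constraint in \eqref{wav: def: Hn}, forces every element of $\H_n^{2n}(K_t)$ to be uniformly bounded in sup norm by $O(n)$. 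I would then pass from an $L_\infty$ covering of the bounded $D_n$-dimensional coefficient ball to an $L_2(\PP_n)$ bracketing bound, giving \eqref{intheorem: statement: GE: strong GE} with $\rho_n\asymp D_n\asymp 2^{Cp}$ and $A_n$ polynomial in $n$, so that $\rho_n\log A_n\lesssim 2^{Cp}\log n$ and $\liminf_n(\rho_n\log A_n)>0$. Substituting $(\rho_n\log A_n)^2\lesssim 2^{2Cp}(\log n)^2$ into the numerator of Theorem~\ref{theorem: GE: strong GE: basis} and absorbing $(\log n)^k$ (with $k\le 4$) produces exactly $(1+x)^2 2^{2Cp}(\log n)^4/n$.

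For the approximation error I would invoke Proposition~\ref{prop: approximation error: strong separation}. Under Assumption~\ref{assumption: smmothness}, $\G-1/2$ and $\Gt-1/2$ are $\beta$-H\"older on each categorical cell, so by $S$-regularity ($S>\beta$) their wavelet coefficients obey the decay $2^{l(\beta+p/2)}|c_{lk}|\lesssim\Y$. Hence the level-$b_n$ truncations $\tfa,\tfb$ lie in $\H_n^{M_0}(K_t)$ for a constant $M_0$ and satisfy $\|\tfa-(\G-1/2)\|_\infty+\|\tfb-(\Gt-1/2)\|_\infty\lesssim 2^{-b_n\beta}$. Taking $b_n$ larger than a constant $C$ makes this error smaller than the separation margin guaranteed by Assumption~\ref{assump: boundedness}, so Proposition~\ref{prop: approximation error: strong separation} applies to $(\tfa,\tfb)$. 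Scaling by $a_n\asymp n$ keeps $a_n\tfa,a_n\tfb$ inside the ball of radius $M=2n$ (since $a_nM_0<2n$), while the proposition gives regret $\lesssim a_n^{-(\kappa-1)}\le n^{-1}$ for type~A surrogates ($\kappa\ge 2$ by Definition~\ref{def: type of phi}) and $\lesssim\exp(-\kappa Ca_n)$ for type~B; both are $O(1/n)$, which establishes \eqref{ineq: regret} with $k=0$.

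The main obstacle will be the entropy step, and specifically the dependence on $p$. I expect two points to require care: (i) tracking the wavelet dimension count and the $O(n)$ sup-norm bound precisely enough to land on the factor $2^{2Cp}$ rather than a looser power of two, and (ii) controlling \emph{bracketing} (not merely covering) numbers in the random $L_2(\PP_n)$ norm, which I would manage by using the uniform boundedness of the class to build brackets from an $L_\infty$ net of the coefficient ball. A secondary subtlety is the simultaneous constraint that the scaled truncated approximation $a_n\tfa$ both remain inside the prescribed coefficient ball $M=2n$ and clear the separation margin required by Proposition~\ref{prop: approximation error: strong separation}; choosing $b_n$ constant and $a_n\asymp n$ reconciles these two requirements. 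The remaining combination of the three error sources is then routine via Theorem~\ref{theorem: GE: strong GE: basis}.
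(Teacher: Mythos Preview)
Your proposal is correct and follows essentially the same route as the paper's proof: verify the VC-type bracketing entropy for the wavelet class with constant level $b_n$ (giving $\rho_n\asymp 2^{Cp}$, $A_n$ polynomial in $n$), use Proposition~\ref{prop: approximation error: strong separation} with $a_n\asymp n$ to show the approximation error is $O(1/n)$, and then apply Theorem~\ref{theorem: GE: strong GE: basis}. The only cosmetic differences are that the paper writes $k=1$ rather than $k=0$ (both are absorbed into $(\log n)^4$) and derives the entropy bound by quoting its Lemma on bracketing entropy of finite-basis classes rather than by netting the coefficient ball directly.
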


{\color{black}
\section{Optimization error}
\label{sec: opt: summary}
 For the sake of brevity, we have moved the detailed discussion on optimization to Supplement \ref{sec:opt error}, and only summarize the key results in this section. In our analysis of optimization error, we exclusively focus on policies that are linear combinations of features. Specifically, we consider policies of the form $\tha(H_1)=J_1^T\theta$ and $\tha(H_2)=J_2^T\beta$, where $\theta\in\RR^{k_1}$ and $\beta\in\RR^{k_2}$ are the parameters, and $J_1 \in \mathbb{R}^{k_1}$ and $J_2 \in \mathbb{R}^{k_2}$ are functions of  $H_1$ and $H_2$ respectively, possibly involving basis functions or polynomials.  Even though $J_1$ and $J_2$ could depend on $n$, we omitted $n$ from their notation for the sake of simplicity. 
Furthermore, $V(\theta,\beta)$, $V_\psi(\theta,\beta)$, $\widehat V_\psi(\theta,\beta)$ will denote the value function, surrogate value function, and the empirical value function, respectively, of  the linear policies specified by $\theta$ and $\beta$.

  In addition to Assumptions I-IV, we assume that the conditional treatment effects or the blip functions defined in \eqref{def: treatment effect stage 1} and \eqref{def: treatment effect stage 2} are linear in $J_1$ and $J_2$, respectively. Specifically, we posit $\mathcal T_1(H_1)=\tk^TJ_1$ and $\mathcal T_2(H_2)=\bk^T J_2$, where $\tk\in\RR^{k_1}$ and $\bk\in\RR^{k_2}$ are unique constants.  Additionally, we assume that these treatment effects are non-zero with a probability of one.
  Furthermore, we assume that the features and the outcomes are bounded by a constant $C_{max}>0$.  Moreover, we consider $\phi$ functions that possess concave characteristics in the positive half of the X-axis and convex traits in the negative half. For example, the smooth $\phi$'s in Example \ref{Example: sigmoid} will satisfy this property. While some  results in Supplement \ref{sec:opt error} hold in more general setups than the one outlined above, we adopt these assumptions for the sake of ensuring a streamlined presentation within this section. Under the current setup, the optimal first stage treatment assignment is $1[\tk^TJ_1>0]$ and the optimal second stage treatment assignment is $1[\bk^TJ_2>0]$, implying $V^*=V(\tk,\bk)$.

\paragraph{Landscape analysis:} In the context of optimization problems, landscape analysis refers to the examination of the surface of the objective function. This analysis is particularly crucial in non-convex optimization problems, as it provides insights into the location of critical points and the global optimum.
   Lemma \ref{lemma: opt: positive data} in Supplement \ref{sec: absence of critical points} sheds light on the critical points of our objective, i.e., the empirical surrogate value function, in compact sets.  From a high level, this lemma says that this function has no critical point in compacta with probability tending to one.  Therefore, the supremum of our objective function is not attained in any compact set with high probability. A pivotal step in establishing this lemma is demonstrating that $V_\psi$, the   surrogate value function, has no critical point in  $\RR^{k_1+k_2}$. This characteristic of the surrogate value is  inherited from the underlying surrogate $\psi$, which attains maxima at $(\infty,\infty)$ and possesses no critical point in $\RR^2$.  Lemma \ref{lemma: opt: sup} in Supplement \ref{sec: opt: sup}  complements Lemma \ref{lemma: opt: positive data} by entailing that  the supremum of $V_\psi$ is  $C_\phi^2 V(\tk,\bk)$, and it is obtained as a limit in the sense that $V_\psi(a\tk,b\bk)$ approaches $C_\phi^2V(\tk,\bk)$ as $a\to\infty$ and $b\to\infty$. Furthermore, for all other $\theta\in\RR^{k_1}$ and $\beta\in\RR^{k_2}$, it turns out that $V_\psi(a\theta,b\beta)$ approaches sub-optimal values as $a\to\infty$ and $b\to\infty$. In the special case where $\theta\in\RR$ and $\beta\in\RR$, the above implies that $V_\psi$ is maximized at one of the following extended-valued-tuples:  $(\infty,\infty)$, $(-\infty,\infty)$, $(\infty,-\infty)$, and $(-\infty,-\infty)$.  
 
  Lemma \ref{lemma: opt: diff} in Supplement \ref{sec: opt: sup} shows that for 
  any $(\theta,\beta)$, the $\psi$-regret $V_\psi(\theta,\beta)$ is closely approximated by 
a scaled version of its actual regret when the $l_2$ norms of $\theta$ and $\beta$ are sufficiently large. Using this result, we demonstrate that if  $V_\psi(\theta,\beta)$ is close to the supremum of $V_\psi$, $V(\theta,\beta)$ is also close to $V^*$, suggesting an analog of Fisher consistency for our surrogates within the restricted class of linear policies. This result ensures that any policy with a small $\psi$-regret  will indeed be a high-quality policy under our setup.    Moreover, when the value function is continuous in $\theta$ and $\beta$, our heuristic analysis indicates that $V_\psi(\theta,\beta)$ becomes arbitrarily close to the supremum when the angle between $(\theta,\beta)$ and $(\tk,\bk)$ is small and the $l_2$ norms of $\theta$ and $\beta$ are sufficiently large. In addition, since $V_\psi$ is bounded, we conjecture that $V_\psi$  has superlevel sets where it is concave. 
We anticipate that in large samples, the empirical value function $\widehat V_\psi$ display similar properties, and exhibits concavity in some superlevel sets.  We provide a toy example in Supplement \ref{sec: toy example} (see  Figure \ref{fig:loss surface} in the Supplement), where the above holds true. 

\paragraph{Convergence analysis:}
Result \ref{result: gradient} in Supplement \ref{sec: opt: convg} shows that our objective function has  globally Lipschitz gradients. Using this result alongside classical results on gradient descent and stochastic gradient descent \citep{bottou2018optimization}, we argue that the gradient descent and stochastic gradient descent iterates  will diverge under the above-mentioned setup. This is unsurprising given the absence of critical points in $\RR^{k_1+k_2}$. Also,  for properly chosen stepsizes, our objective will strictly decrease at each iteration  in this case (gradient descent is applied to the  minimization problem). If the gradient descent or stochastic gradient descent iterates enter the  superlevel sets mentioned in the previous paragraph,   the empirical value function $\widehat V_\psi$ will converge to the supremum, provided the conjectured concavity holds on these sets. In that case, the optimization error $\Opn$ will approach zero. 
However,  the gradient descent iterates can also diverge in other directions, leading to sub-optimal values and non-negligible optimization error. It appears to us that the initiation probably influences where the iterates will diverge. Lastly, while the iterates may theoretically diverge when gradient-descent type algorithms are applied to the objective $-\widehat V_\psi$,  in practice, the algorithms will stop after finitely many iterations, provided the stopping criteria is based on the change in the objective function. This is because this objective is bounded below, and it strictly decreases at each iteration. 

 Finally, we show that  Polyak- \L{}ojasiewicz (PL) Inequality and other well-known conditions that ensure convergence to the global maximum in general non-convex problems do not apply to our optimization scenario. Furthermore, while we expect our objective function to exhibit concavity in some superlevel sets, it will probably not exhibit the strong concavity required for our desired rate guarantees. As a result,  using  existing results on gradient and stochastic gradient descent, we can only show that the squared $l_2$-norm of the gradients decays at a linear rate. Unfortunately, this limited information doesn't provide substantial insight into the optimization error. Obtaining specific rate-related results in the absence of PL inequality and strong concavity is generally challenging in non-convex problems \citep{patel2022gradient,bottou2018optimization,Patel2021GlobalCA,karimi2016linear}. Such an analysis would likely necessitate a comprehensive examination of gradient descent or stochastic gradient descent tailored to the specific structures inherent to our problem. 
 However, conducting a comprehensive analysis of this nature is beyond the current scope of our paper, and we view it as a potential avenue for future research.}
\section{Connection to related literature}
\label{sec: related literature}
  
  \subsection*{Comparison with SOWL \citep{zhao2015}}
  As mentioned earlier, our work is inspired by the SOWL method of \cite{zhao2015}, who uses the hinge loss $\psi(x,y)=\min(x,y,1)$ as the surrogate. Since hinge loss is non-smooth, the resulting surrogate value function is not amenable to gradient-based optimization methods.  The surrogate value function is therefore optimized via the dual formulation, which leads to SVM-type methods.  However, SVM-type approaches have several limitations. First, the number of linear constraints of the SVM formulation  scales linearly with the sample size, which leads to  the computational complexity of $O(n^3)$ \citep{NIPS2000_19de10ad}. This makes SVM-based methods not scalable for large-scale EHR studies where $n$ is large.  Second, the dual objective does not facilitate straightforward variable selection via the addition of an  $l_1$ penalty. Finally, SVM-type methods are not flexible enough to accommodate tree-based or neural-network-type classifiers with rigorous statistical guarantees. Our surrogate loss-based approach is flexible, and it has the advantage of both computational efficiency and the ability to incorporate variable selection. 
  
  \subsection*{Comparison of regret upper bound} 
Table~\ref{tab: rate compare} compares our regret bound with neural network and wavelets classifiers with the available regret bound for the following nonparametric DTR methods: BOWL/SOWL \citep{zhao2015}, nonparametric Q-learning, the list-based method of \cite{zhang2018interpretable}, and the stochastic tree-based reinforcement learning (ST-RL) method of \cite{sun2021stochastic}. The last method uses a Bayesian additive regression tree (BART). An important fact about the last two methods is that they especially target enhanced interpretability. We also remark that the upper bound on Q-learning regret under small noise assumption is derived using \cite{qian2011}'s results, which were for stage one.
We refer to Supplement~\ref{sec: on the table} for more  details on these methods and the construction of Table~\ref{tab: rate compare} using the results in the associated papers. In Table~\ref{tab: rate compare}, by ``noise assumption", we indicate assumptions on the data distribution at the treatment boundary; Assumption~\ref{assumption: small noise} is an example, but see  Section~\ref{sec: small noise} for more details. By ``smoothness assumption", we refer to assumptions on the smoothness of functions such as $\eta_1$, $\eta_2$,  the blip functions in \eqref{def: treatment effect stage 1} and \eqref{def: treatment effect stage 2}, or their derivatives.

It is important to note that the regret bounds of \cite{zhao2015} are originally produced under the geometric noise condition of \cite{steinwart2007}.  We use the correspondence between the latter condition and our Assumptions \ref{assumption: small noise} and \ref{assumption: smmothness}, which follows from \cite{steinwart2007} itself. See Supplement~\ref{sec: on the table} for more details on this.  Also, the interpretable methods and Q-learning do not consider the approximation error, and thus their regrets are with respect to the best treatment regime within the  classes under consideration. 
  \begin{table}[H]
      \centering
      
      \begin{tabular}{llll}
      Method     & \makecell[vl]{Regret bound\\ exponent} & \makecell[vl]{Noise \\assumption\\type} & \makecell[vl]{Smoothness\\ assumption\\type} \\
      \toprule\\
       \makecell[vl]{ DTRESLO with NN \\ and wavelets}   & $\min(\frac{1+\alpha}{2+\alpha+p/\mybeta},-\log_n\Opn)$  & Small  noise & \Holder\ ($\mybeta$)\\
       BOWL/ SOWL & $\begin{matrix}
 \frac{1}{2}\frac{1+\alpha}{1+\alpha+p/\mybeta}-\e^* &\text{if }\alpha\geq p/\mybeta-1\\
 \frac{1+\alpha}{3+3\alpha+p/\mybeta}-\e^* & \text{o.w.} 
 \end{matrix}$
 & Small noise & \Holder\ ($\mybeta$)\\
 \makecell[vl]{Nonparametric\\Q-learning$^{\dag}$}  & $\frac{(1+\alpha)}{2+\alpha}\frac{2}{2+p/\mybeta}$  & Small noise & \Holder\ ($\mybeta$)\\
 ST-RL & $\frac{1}{3}\frac{2}{2+p/\mybeta}$ & $\text{Different}^{**}$ & \Holder\ ($\mybeta$)\\
List- based & $\slb\frac{2}{3}\srb^{l^{\dag\dag}}\frac{1}{2+p/\mybeta}$ & $\text{Different}^{**}$ & \makecell{Modulus of\\ smoothness $(\mybeta)^{**}$}\\
\bottomrule 
      \end{tabular}
      \caption{Best possible regret-decay rate  given by available upper bounds on different DTRs: here, the decay rate is $O(n^{-\text{exponent}})$ up to a polylog term. Also, $\Opn$ is the optimization error mentioned in Section~\ref{sec: error decomposition}. Here NN: neural network.
     \begin{flushleft}
      $\vphantom{h}^*$Here $\e$ is any positive number.\\
        $\vphantom{h}^{**}$ See Supplement~\ref{sec: on the table} for more details.\\
        $\vphantom{h}^{\dag}$ This regret bound holds for one stage case.\\
         $\vphantom{h}^{\dag\dag}$  $l$ is the length of the longest classification list.
     \end{flushleft}
       }
      \label{tab: rate compare}
  \end{table}
 
  First note that, since $\alpha>0$, we always have  $1/3<(1+\alpha)/(2+\alpha)$. Thus Table~\ref{tab: rate compare} implies ST-RL has a  larger regret  bound  than  nonparametric Q-learning. However, ST-RL is not far from a   Bayesian version of nonparametric Q-learning. The list-based method of \cite{zhang2018interpretable} also faces an apparent loss in efficiency.  The less efficiency for these two methods is best perceived as the cost of enhanced interpretability  \citep{sun2021stochastic}. Anyway, since  no lower bound on these regrets is available, nothing can be said for certain. 
  
  By elementary algebra, it can be shown that when $\Opn$ is negligible, our rate is sharper than the rate guarantees of all  other methods in Table~\ref{tab: rate compare}. For the sake of simplicity, in the rest of the discussion, we assume that $\Opn$ is negligible.  We have already mentioned that \cite{zhao2015}'s original analysis uses the geometric noise assumption. In Supplement~\ref{sec: on the table}, we show that the geometric noise assumption is satisfied when MA-type small noise assumption  and Assumption~\ref{assumption: smmothness} both hold. From \cite{steinwart2007}, it follows that  under our  assumptions, binary classification with hinge loss almost attains the regret decay rate $n^{-(1+\alpha)/(2+\alpha+p/\mybeta)}$. (We use the word ``almost" here because the term $\e$ in Table~\ref{tab: rate compare} still can not be removed.)
  Had the small noise condition been used, exploiting the direct correspondence between BOWL and binary classification, it may be possible to obtain a similar upper bound for BOWL's regret. Therefore it is possible that BOWL also attains the regret decay rate of $n^{-(1+\alpha)/(2+\alpha+p/\mybeta)-\e}$   under our  assumptions.
  
 We speculate that improving the Q-learning regret bound
     may not be  easy under our current set of assumptions. The reason is that we anticipate  the term $2/(2+p/\mybeta)$ in the upper bound  probably can not be improved. This term stems from the lower bound on $L_2(\PP)$ error rates of nonparametric methods; see Supplement~\ref{sec: on the table} for more details. Therefore, we speculate that under our Tsybakov small noise assumption and our smoothness assumption, there may be a gap between the regret of NP Q-learning type methods and direct search methods such as ours or BOWL. To rigorously prove this claim, one needs to calculate the lower bound on the regret of two stage NP Q-learning, which is beyond the limits of the current paper. It must be kept in mind that, even if DTRESLO exhibits a sharper rate of regret decay than Q-learning under Tsybakov's small noise assumption,
     it does not inherently imply that DTRESLO is always theoretically superior to Q-learning. In the context of binary classification, \cite{audibert2007} noted that the theoretical comparison between model-based and model-free methods heavily relies on the assumptions imposed on the data distribution.  We anticipate that a similar notion applies when comparing regression-based methods and direct search methods in DTR. Stronger assumptions on the data distribution, such as the strong density assumption in \cite{audibert2004}, can facilitate easier nonparametric estimation, leading to a faster rate of regret decay for nonparametric model-based methods under the small noise assumption \citep{luedtke2016, qian2011}. {\color{black} \label{page: propensity scores} Furthermore, it is important to note that all our comparisons are based on the assumption that the propensity scores are known. When they are unknown, the regret decay rate of DTRESLO and BOWL will be slower, even under Tsybakov's small noise assumption. Therefore the comparison between direct search methods and Q-learning will differ under these circumstances.}

     There is also a possibility that if we opt for more interpretable classifiers, the theoretical efficiency will decline as in the case of  ST-RL and the classification list-based method of \cite{zhang2018interpretable}. Hence, there is a possibility of a trade-off among theoretical efficiency, computational hardness, and interpretability, at least in principle. Future research in this direction may be able to shed more light on this trade-off.
     


\section{Empirical Analysis}\label{section: simulation & application}
We have performed extensive simulations to evaluate the finite sample properties of our DTRESLO methods and to compare them with existing methods under different generating data mechanisms. We additionally evaluate the performance of the proposed method using EHR data to identify optimal treatment rules for patients in intensive care units (ICU) with sepsis.

\subsection{Simulations}
\label{sec: simulation}

We compare the performance of our DTRESLO method with the regression-based method Q-learning and the direct search methods BOWL and SOWL \citep{zhao2015}.
For our DTRESLO method, we take $\phi(x)=1+2/\pi \cdot \arctan(\pi x/2)$ because simulation shows that it has slightly better performance than the other smooth surrogates considered in Example~\ref{Example: sigmoid}. We consider several choices for the class of classifiers $\U_{1n}$ and $\U_{2n}$. When we consider the linear treatment policies,  $\U_{1n}$ and $\U_{2n}$ are the class of all linear functions on $\H_1$ and $\H_2$, respectively. We consider
 cubic spline, wavelets, and neural network (NN)  as the non-linear treatment policies, with  $\U_{1n}$ and $\U_{2n}$ being the respective function-classes in these cases. For the comparators, i.e. Q-learning, BOWL, and SOWL, we consider both linear  and non-linear policies  as well. Following \cite{zhao2015}, we incorporate non-linear policies  for  BOWL and SOWL using a reproducing kernel Hilbert space (RKHS) with  RBF kernel; see \cite{zhao2015} for more  details. The non-linear treatment policy for Q-learning is achieved by letting the  Q-functions be in neural network classes.  See Supplement~\ref{sec: details of simulation} for more details on the implementation of these methods.
 
We considered five broad simulation settings as detailed in Section~\ref{sec: details of simulation} of the Supplement:
\begin{enumerate}
    \item All covariates are discrete. Hence, an exhaustive search over $\F$ is possible using saturated models. 
    \item This is a setting with non-linear decision boundaries in both stages. However, $Y_2$ does not depend on $A_1$.
    \item This setting is inspired by Setting 2 of  \cite{zhao2015}, where the outcome models i.e. $\E[Y_t\mid H_t]$'s are  linear function of $H_t$ for $t=1,2$. We will call this setting the linear setting.
    \item This has highly non-linear and even non-smooth decision boundaries.
    \item This setting has a higher number of  covariates. In this case, the first stage outcome model is linear, but the second stage outcome model is non-linear.
\end{enumerate}
 Setting 1 is a simple toy setting. The motivation behind including this setting is to verify the consistency of DTRESLO. 
We will use the linear setting 3 
 to check if linear treatment policies perform well when the outcome models are linear.
 On the other hand, we include settings 2 and 4 to examine if the methods with non-linear treatment policies have an edge over those with linear treatment policies when the decision boundaries are non-linear. Finally, setting 5 is included to compare the performance of different methods when the dimension of $\O_1\cup \O_2$ is comparatively larger. 

Under each listed setting, we estimate the DTRs based on samples of size $n=250,2500,5000$.   For each estimated DTR $\widehat d$, we estimate  the value function $V(\widehat d_1,\widehat d_2)$   by the  empirical value function based on an independent sample of size 10,000. We estimate the expectation and the standard deviation of these value function estimates using 500  Monte Carlo replications. 
 We also estimate the optimal value function $V^*=V(d_1^*(H_1),d_2^*(H_2))$ for each setting using these 500 Monte Carlo replications. Figures~\ref{Fig: value: non-linear} and \ref{Fig: value: linear} compare the estimated expected value functions of the different methods under consideration. In these figures, we use the neural network DTRESLO as the non-linear DTRESLO because this method is comparable to neural network Q-learning. The average value functions corresponding to  the other non-linear DTRESLO methods can be found in  Table~\ref{Table:est values} in Supplement~\ref{sec: details of simulation}. The overall performance of all the non-linear DTRESLO methods is quite similar, although NN DTRESLO is slightly better than the rest.
 
 \begin{figure}[h]
 \centering
 \begin{subfigure}[b]{\textwidth}
     \includegraphics[height=2 in, width=\textwidth]{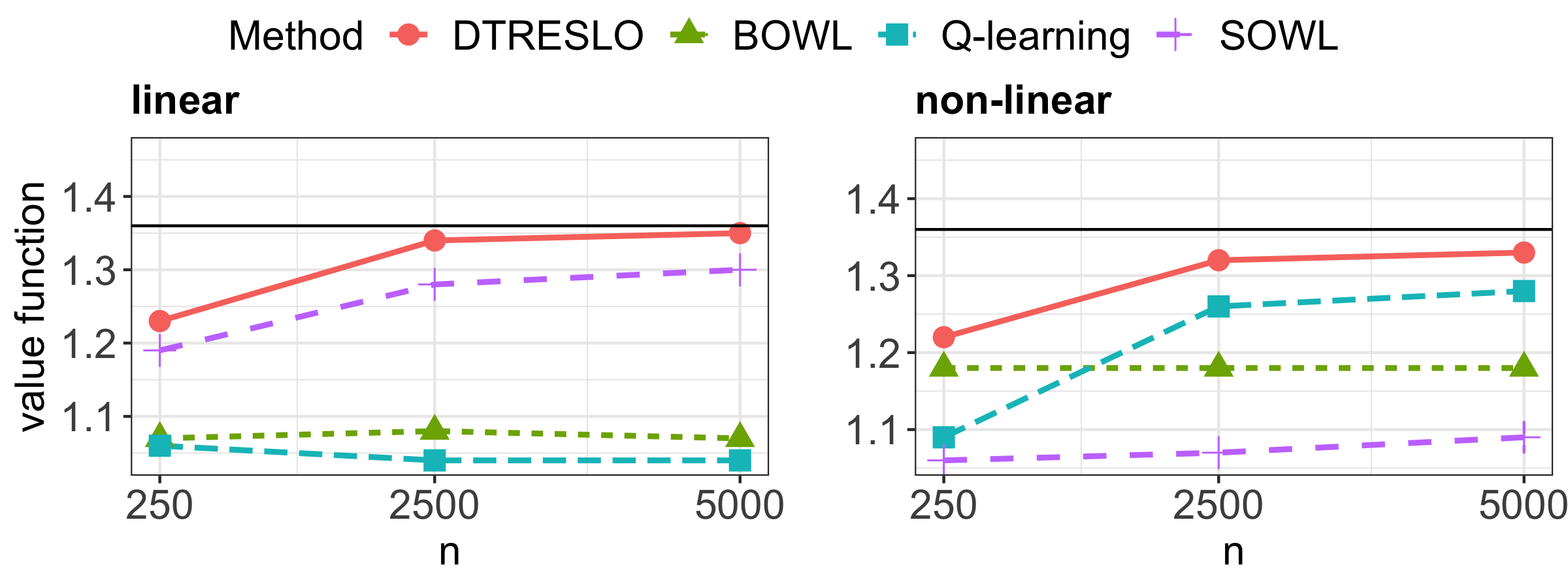}
 \caption{Setting 1.}
 \label{Fig: setting 1}
 \end{subfigure}
  \begin{subfigure}[b]{\textwidth}
     \includegraphics[height=2 in, width=\textwidth]{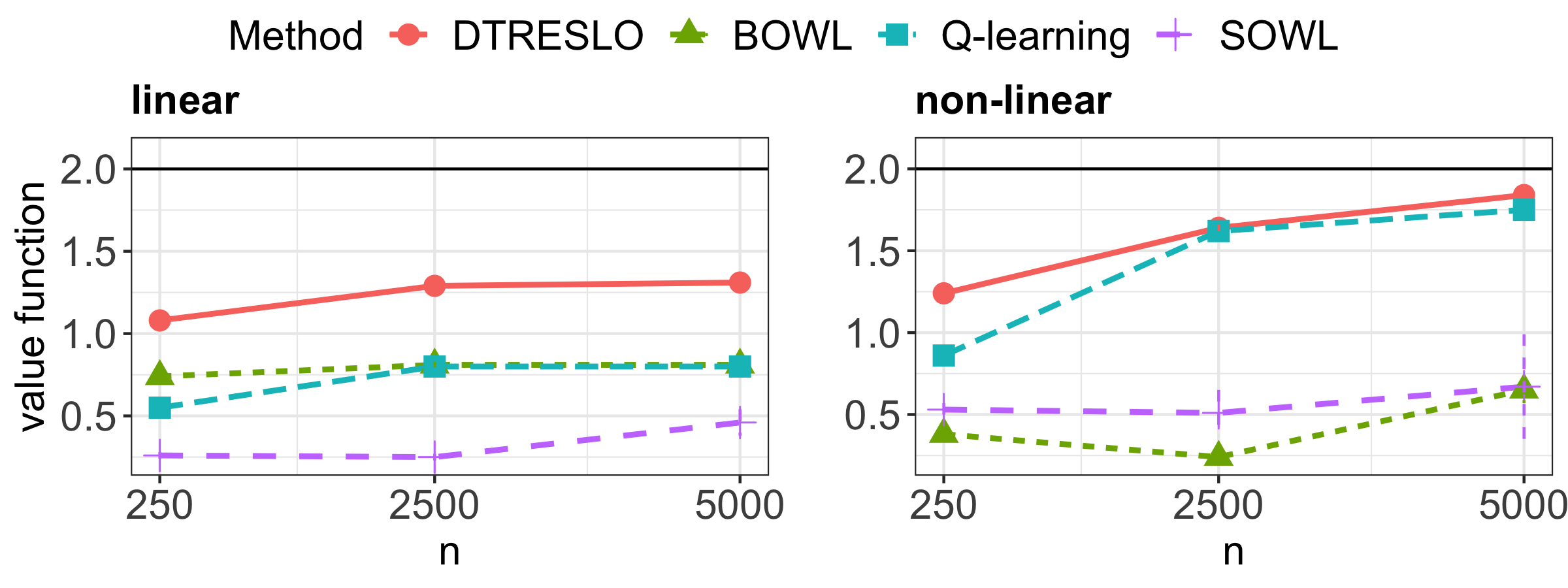}
 \caption{Setting 2.}
 \label{Fig: setting 2}
 \end{subfigure}\\
 \begin{subfigure}[b]{\textwidth}
     \includegraphics[height=2 in, width=\textwidth]{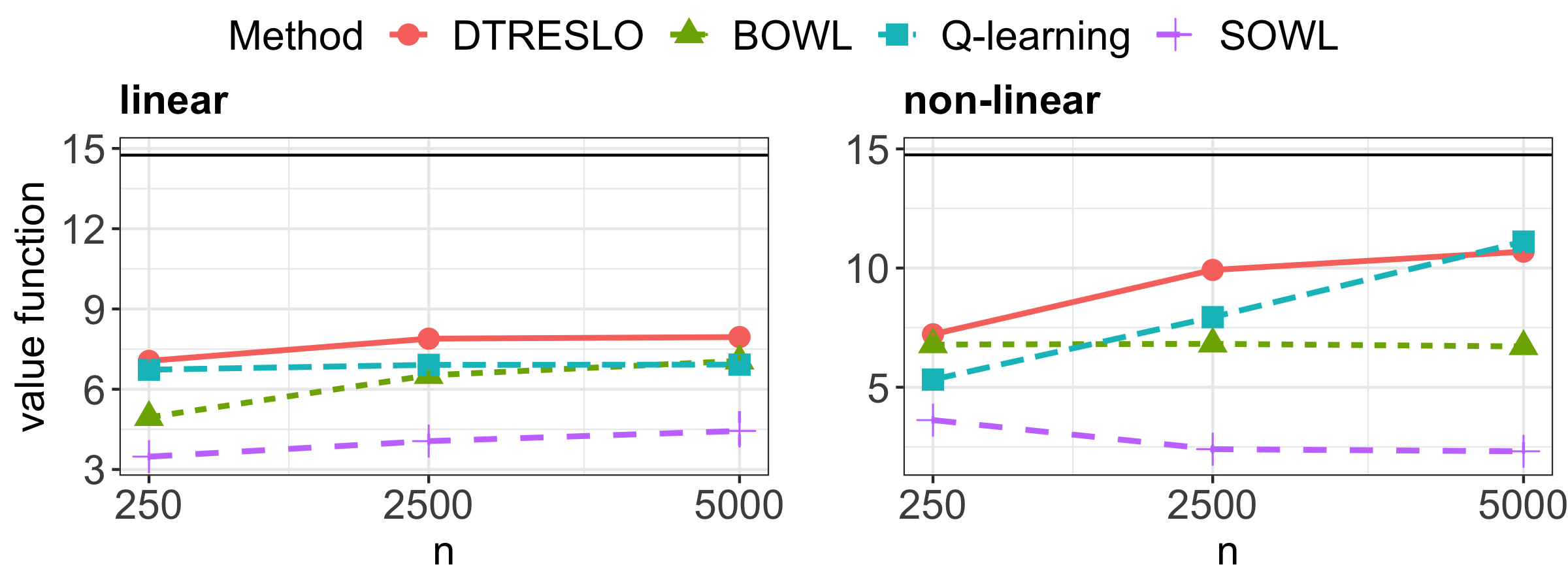}
 \caption{Setting 4.}
 \label{Fig: setting 4}
 \end{subfigure}
 \caption{Plot of the estimated average value functions for the settings 1, 2 and 4. Here the black horizontal line corresponds to the true value function. The left and right panels correspond to the linear and non-linear treatment policies, respectively. Here the non-linear DTRESLO corresponds to the neural network classifier. The error bars are given by $\pm 2$ SD.  }\label{Fig: value: non-linear}
 \end{figure}

\begin{figure}[h]
\centering
 \begin{subfigure}[b]{\textwidth}
     \includegraphics[height=2 in, width=\linewidth, keepaspectratio]{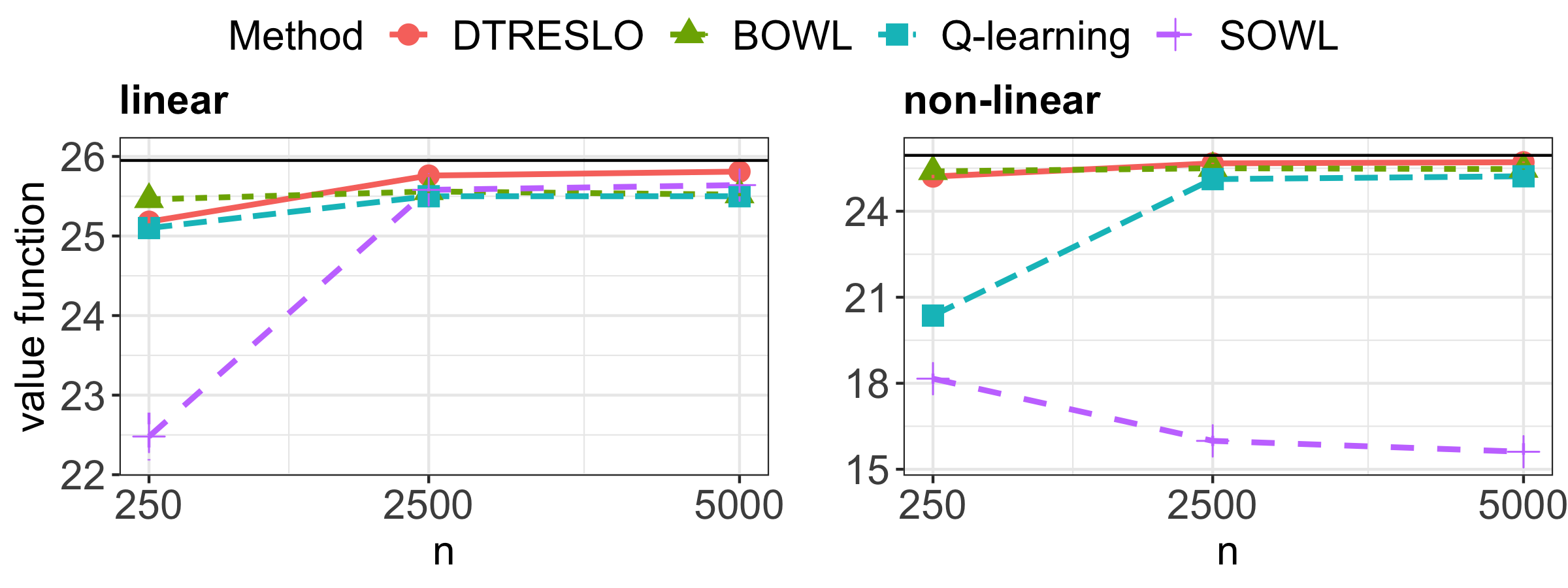}
 \caption{Setting 3.}
 \label{Fig: setting 3}
 \end{subfigure}
 \begin{subfigure}[b]{\textwidth}
     \includegraphics[height=2 in, width=\textwidth]{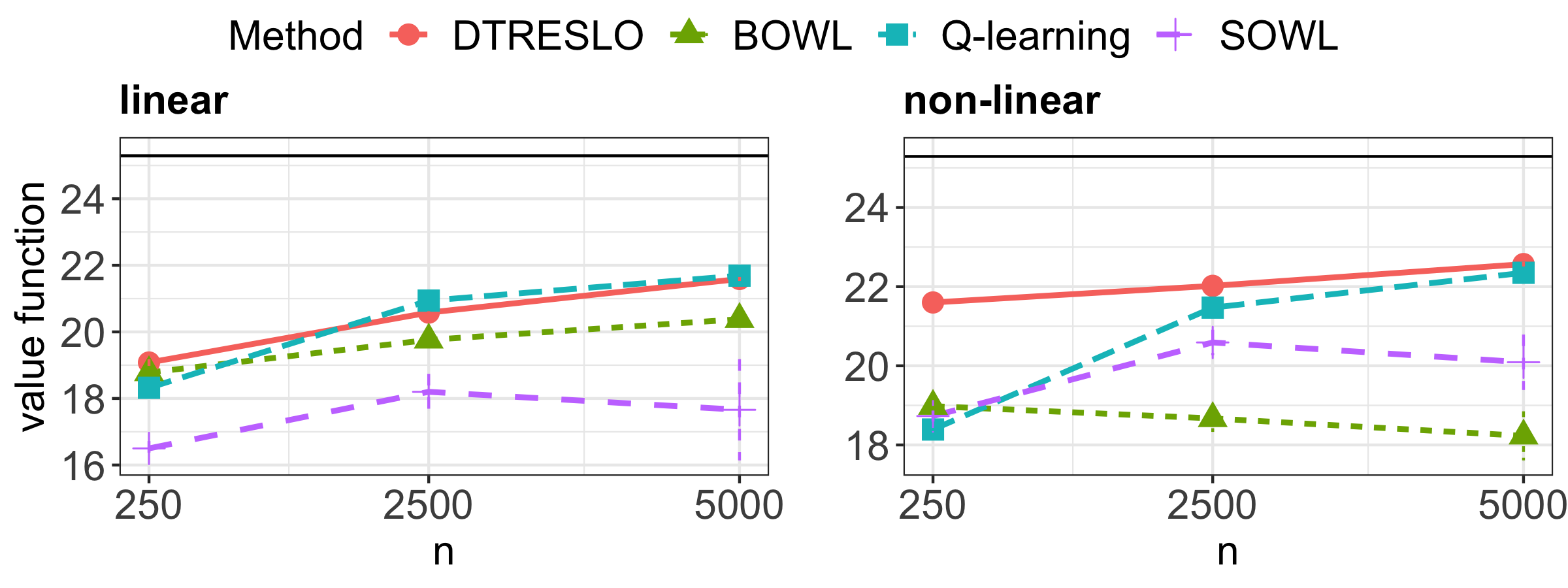}
 \caption{Setting 5.}
 \label{Fig: setting 5}
 \end{subfigure}
 \caption{Plot of the estimated average  value functions for the settings 3 and 5. Here the black horizontal line corresponds to the true value function. The left and right panels correspond to the linear and non-linear treatment policies, respectively. Here the non-linear DTRESLO corresponds to the neural network classifier. The error bars  are given by $\pm 2$ SD.  }\label{Fig: value: linear}
 \end{figure}


First of all, Figures~\ref{Fig: value: non-linear} and \ref{Fig: value: linear} entail that   DTRESLO consistently performs better or at least as good as  the other methods under all our settings and all sample sizes. No other method has reliable performance across all settings.  
First, we will  investigate the five settings in more detail. Then we will look more closely into the comparison between  DTRESLO and the other methods. Finally, we will compare the run-time of different methods.

 Figure~\ref{Fig: setting 1} underscores that in the simple setting 1, DTRESLO outperforms all other methods under both linear and non-linear treatment policies. 
 Figures \ref{Fig: setting 2} and \ref{Fig: setting 4} show that in the non-linear settings 2 and 4, as expected, the non-linear versions of DTRESLO, BOWL, and Q-learning perform better than the linear counterparts. The only exception is the case of SOWL, which we will discuss later in more detail. We also observe that setting 4 is quite hard in that the expected value function of all methods is noticeably lower than the optimal value function. In settings 2 and 4, non-linear DTRESLO performs noticeably better than non-linear Q-learning in a small sample ($n=$ 250).
  As the sample size increases, the difference decreases. SOWL has poor performance under both settings. Although BOWL has better performance than SOWL, its performance improves rather slowly with $n$ when compared to DTRESLO. 
  This difference is most noticeable for the non-linear treatment policies under large samples.

Figure~\ref{Fig: setting 3} implies that under the linear setting 3,  value function estimates of the linear treatment policies are as large as  the non-linear policies for all methods except SOWL. Setting 5, which has a larger number of variables, is a relatively more complicated setting.  Although the second stage outcome models are  non-linear in this setting, Figure~\ref{Fig: setting 5} underscores  that  linear DTRESLO performs quite  comparably to non-linear DTRESLO in large samples under this setting. Table~\ref{Table:est values} in Supplement \ref{sec: details of simulation} implies that the situation with the other non-linear DTRESLO methods is similar. Similar to settings 2, in this case, non-linear DTRESLO has a noticeable edge over all other methods when the sample size is 250.

Under all settings, non-linear DTRESLO and Q-learning exhibit one particular pattern, which merits some discussion. Non-linear DTRESLO performs better than non-linear Q learning in small samples, but their performance becomes almost similar when the sample size increases to 5000. The relative underperformance of nonparametric Q-learning in small samples may be due to its heavy  reliance  on the correct estimation of  Q-functions. Nonparametric estimation of functions is harder unless the sample size is sufficiently large. In contrast, DTRESLO only needs to estimate  the sign of the blip functions, which is easier than the estimation of the whole function. Finally, this difference may be the manifestation of the speculated faster regret decay of neural network DTRESLO (see  Section~\ref{sec: related literature}). Thus our simulation study complements the theoretical comparison of the regrets between nonparametric Q-learning and DTRESLO.

DTRESLO outperforms the other direct search methods, BOWL and SOWL, under all settings except the linear setting, i.e. setting 3, where BOWL and DTRESLO have  comparable performance.  The difference is most pronounced for non-linear treatment policies in large samples.
  DTRESLO's advantage over BOWL may be attributed to DTRESLO's simultaneous optimization approach as opposed to BOWL's stagewise approach. The  latter reduces the effective sample size in the first stage.  In general, SOWL's average value function  stays quite below the optimal value function. 
  Its performance is comparable to other methods only in  setting 3, where classification is comparatively easy. 

{\color{black} \label{page: setting 3}   Figures~\ref{Fig: value: non-linear} and \ref{Fig: value: linear} entail that the estimated value function of  non-linear SOWL, a nonparametric method by design,  either does not improve with the sample size or exhibits a much slower increase compared to the other competing methods we consider.  The last observation raises the question of whether the approximation error of SOWL at all decays to zero as the sample size increases. Indeed, this observation does not refute our Theorem~\ref{theorem: hinge}, which establishes that the hinge loss, the surrogate employed in SOWL, requires the fulfilment of  \eqref{inlemma: hinge solution first stage requirement} for $\tilde{d}_1(H_1)$ to align with $d^*_1(H_1)$. Moreover, in Supplement \ref{sup: hinge}, we demonstrate that \eqref{inlemma: hinge solution first stage requirement} is not a pathological condition, as it fails in numerous non-trivial scenarios. To elaborate further, we focus on Setting 3 as an illustrative case.  In this case, the non-parametric version of SOWL exhibits a decaying value with respect to $n$. For this setting, the outcome models are linear, and $H_1\in \RR^3$ follows a centered multivariate Gaussian distribution with an identity covariance matrix. Consequently, $H_1$ lies inside a ball of radius $5$ centered at the origin with a high probability (specifically, greater than $0.999$). However, we empirically evaluated that \eqref{inlemma: hinge solution first stage requirement} holds nowhere inside this ball. Moreover, if a location transformation is required to ensure the positivity of outcomes for certain samples, as discussed in Section \ref{sec: hinge loss}, \eqref{inlemma: hinge solution first stage requirement} becomes more difficult to satisfy. Therefore, the suboptimal performance of non-linear SOWL  may be attributable to the potential failure of \eqref{inlemma: hinge solution first stage requirement} in this case.}

\setlength{\tabcolsep}{2.5pt}
\begin{table}[h]
\centering
\resizebox{\columnwidth}{!}{
\begin{tabular}{cc cccc|cc|cc|cc}
\hline
\multirow{2}{*}{Setting}   & \multirow{2}{*}{$n$} & \multicolumn{4}{c|}{DTRESLO} & \multicolumn{2}{c|}{BOWL} & \multicolumn{2}{c|}{SOWL} &  \multicolumn{2}{c}{Q-Learning}
  \\ \cline{3-12}
  && Linear & Wavelet & Spline & NeuNet & Linear  & RBF & Linear & RBF  &  Linear & NeuNet               \\ \hline
\multirow{3}{*}{1} & 250 &	0.04 &	0.05 &	0.04 &	0.1 &	1.24 &	21.01 &	0.1 &	0.16 &	0.07 &	0.18 \\
 &2500 &	0.42 &	0.49 &	0.43 &	0.94 &	13.11 &	655.43 &	54.19 &	80.82 &	0.69 &	2.12 \\
 &5000 &	0.89 &	1.02 &	0.8 &	2.15 &	77.45 &	3913.85 &	400.32 &	534.36 &	1.4 &	3.94   \\ \cline{2-12} 
\multirow{3}{*}{2}&	250&		0.04&		0.06&		0.04&		0.16&		1.36&		3.48&		1.3&		1.33&		0.09&		0.19 \\
&	2500&		0.54&		0.6&		0.42&		1.01&		27.75&		271.08&		773.79&		822.42&		0.73&		1.83 \\
&	5000&		0.88&		1.25&		0.91&		3.7&		136.88&		5139.53&		5901.54&		5755.75&		1.49&		4.15 \\ \cline{2-12} 
\multirow{3}{*}{3}& 250&	0.05&	0.05&	0.04&	0.15&	12.59&	24.39&	0.08&	0.13&	0.08&	0.2 \\
&2500&	0.41&	0.71&	0.42&	1.03&	25.75&	704.16&	46.55&	106.67&	0.73&	2.04 \\
&5000&	1.22&	1.04&	0.84&	2.19&	107.99&	4063.34&	345.83&	859.37&	1.46&	5.36 \\ \cline{2-12} 
\multirow{3}{*}{4}&250	&0.04&	0.06&	0.04&	0.1&	1.66&	3.21&	1.28&	1.33&	0.09&	0.19\\ 
&	2500&		0.59&		0.49&		0.42&	1.04&	20.12&	424.81&	806.54&	833.64&	0.73&	1.88\\ 
&5000&	0.86&	1.32&	0.91&	1.5&	70.86&	3317.64&	5674.96&	5778.79&	1.47&	3.61 \\ \cline{2-12} 
\multirow{3}{*}{5}&250&	0.04&	0.05&	0.04&	0.09&	10.97&	18.05&	1.26&	1.32&	0.07&	0.18 \\
&2500&	0.6&	0.48&	0.42&	1.53&	33.46&	222.42&	810.59&	833.1&	0.72&	1.92 \\
&5000&	1.19&	1.26&	1.21&	2.1&	169.31&	1012.86&	5645.25&	6010.88&	1.38&	3.79 \\ \hline
\end{tabular}}
\caption{Run-time for estimating DTR for our smooth surrogates (DTRESLO), \cite{zhao2015}'s BOWL \& SOWL, and $Q$-learning under settings 1--5.}\label{Table:run time}
\end{table}

 Table \ref{Table:run time} tabulates the run-time of the DTR estimation methods. Run times for DTRESLO with linear, wavelets, and spline-based treatment policies are relatively similar. The run-time doubles for neural network treatment policies. Nonetheless, they are all less than three seconds. Both linear and neural network $Q$-learning methods  are slightly slower than their DTRESLO counterparts, but the difference in run-time is negligible. This is not surprising because DTRESLO and $Q$-learning methods are trained in a similar way. They all use stochastic gradient descent with RMSprop for optimization of the respective loss functions. All these methods are trained for 20 epochs and use a batch size of 128.   As expected, BOWL and SOWL  have a much larger run-time, which also increases sharply with $n$. This larger run-time is expected because SVMs utilize the dual space for optimization. The time cost is especially high in settings 2 and 4, which have highly non-linear decision boundaries, and setting 5, which has over 32 features.

To summarize, DTRESLO improves the scalability of existing direct search methods, achieving run-time as small as  Q-learning.
We also observe that within the same class of treatment regimes, i.e. linear or neural network, DTRESLO  outperforms regression-based Q-learning in small samples.  This observation aligns with the existing claim in the literature that  classification is easier than  regression in the context of DTR especially in small samples  \citep{zhao2015, laber2019}. This may happen because regression-based methods focus on minimizing the $L_2(\PP)$ loss, where  the estimation of optimal rules only requires minimization of the zero-one loss. This mismatch of loss has previously been discussed in literature \citep{murphy2005,qian2011}. Our observation thus hints that bypassing regression may result in  better-quality treatment regimes, at least in small samples.

\subsection{EHR Data Application: DTR for ICU Patients with sepsis}
\label{sec: application}

We evaluate our DTRESLO methods and benchmarks on a cohort of $n=9,872$ ICU patients with sepsis from the Medical Information Mart for Intensive Care version IV (MIMIC-IV)  data \citep{mimicIV}. Sepsis is a situation when body's response to an infection  overwhelms the body's immune system, potentially causing damage to tissue, multi-organ failure, and in some cases, death. Individualization of treatment strategies is highly important for managing sepsis due to the high dissimilarity among sepsis patients \citep{lat2021,Komorowski,SonabendESRL}. Physicians usually treat it with a high, constant dose of antibiotics. They also use vasopressors to control blood pressure. Not all patients benefit from vasopressors, however, and it is also unclear when to administer it \citep{lat2021}. Here $d_t$ corresponds to the decision regarding whether  vasopressors should be administered at a given time $t$, where we consider $t=1$ at baseline and $t=2$ at $4$ hour after diagnosis of sepsis. We code our actions as $A_t=-1$ if no dose is necessary and $A_t=1$ otherwise. The state space is comprised of 46 covariates $O_t$ at each time step. Measured variables include age, body mass index, diastolic and systolic blood pressure, etc. We use an inverse transformation of lactate acid level as the outcome $Y_t$. In particular $Y_t=(LA_t+5)^{-1}+2$ where $LA_t$ stands for lactic acid level at time $t$. Adding the offset $2$ ensures that $Y_t>0$, which is required by Assumption \ref{assump: bound on Y}. 

We evaluate DTRESLO using linear, wavelets, splines and neural network functions. The comparators are as in Section \ref{sec: simulation}. For DTRESLO, we   choose from four choices of $\phi$  presented in Example~\ref{Example: sigmoid}.  To this end, we use a five-fold cross-validation. 
To elaborate, for each split, 
 we estimate the DTRs using 4 folds of the sample. Then we estimate the value function of each treatment regime using the remaining fold.
Thus  DTRESLO with above-mentioned four choices of $\phi$ yield four potentially different treatment regimes in the first step. However, we pick only one regime among these four regimes. We consider that treatment regime, which has the highest value function estimate. 
 We also report the mean and SE of the value function estimates for both DTRESLO and the comparators.

We estimate the value functions of the derived DTRs using a doubly robust approach \citep{Jiang2016,WDR,sonabendw2021semisupervised}. The corresponding value function estimator is given by
\begin{align*}
\begin{split}
\widehat V_{DR}(\widehat d_1,\widehat d_2)=&
\mathbb P_n\bigg[
\widehat Q_1(H_1,\hat d_1(H_1))\\
+&\frac{1[A_1=\hat d_1(H_1)]}{\widehat\pi_1(A_1|H_1)}
\left[
Y_1-\left\{\widehat  Q_1(H_1,\hat d_1(H_1))- \widehat  Q_2(H_2,\hat d_2(H_2))
\right\}\right]\\
+&\frac{1[A_1=\hat d_1(H_1),A_2=\hat d_2(H_2)]}{\widehat\pi_1(A_1|H_1)\widehat\pi_2(A_2|H_2)}\left\{
Y_2-\widehat  Q_2(H_2,\hat d_2(H_2))
\right\}\bigg],
\end{split}
\end{align*}
{ where $\widehat \pi_1$ and $\widehat \pi_2$ are the propensity score estimators, and  $\widehat Q_1$ and $\widehat Q_2$ are  estimators of the Q-functions corresponding to the first and second stage, respectively. Here the $Q$ functions are estimated using a neural network  with the same specifications as our nonparametric Q-learning DTR estimator.}

{
}

\setlength{\tabcolsep}{3pt}
\begin{table}[H]
\resizebox{\columnwidth}{!}{
\begin{tabular}{ccccc|cc|cc|cc}
\hline
                            & \begin{tabular}[c]{@{}c@{}}DTRESLO\\ (linear)\end{tabular} & \begin{tabular}[c]{@{}c@{}}DTRESLO\\ (splines)\end{tabular} &
                            \begin{tabular}[c]{@{}c@{}}DTRESLO\\ (wavelets)\end{tabular} &
                            \begin{tabular}[c]{@{}c@{}}DTRESLO\\ (NN)\end{tabular} & \begin{tabular}[c]{@{}c@{}}BOWL\\ (linear)\end{tabular}           & \begin{tabular}[c]{@{}c@{}}BOWL\\ (RBF)\end{tabular}           & \begin{tabular}[c]{@{}c@{}}SOWL\\ (linear)\end{tabular}           & \begin{tabular}[c]{@{}c@{}}SOWL\\ (RBF)\end{tabular}           & \begin{tabular}[c]{@{}c@{}}Q-learning\\ (linear)\end{tabular} & \begin{tabular}[c]{@{}c@{}}Q-learning\\ (NN)\end{tabular} \\ \cline{2-11} 
$\hat V_{DR}(\hat d_1,\hat d_2)$ & 1.352          &   1.415                                               &    \textbf{1.467}                                                    &    1.458                                                      &1.248 &1.450 &  0.845 & 1.137          & 1.062                                                         & 1.144                                                     \\
SD                     &    0.043 & 0.084                                                &    0.024                               &    0.023                                                & 0.039 & 0.071& 0.148& 0.082 & 0.095                           & 0.102                               \\ \hline
\end{tabular}}\caption{Estimated Value function and Standard deviation for the DTRs derived with different methods. Here  NN stands for neural network.}\label{Table: sepsis values}
\end{table}

Table \ref{Table: sepsis values} reports the estimated values using the doubly robust estimator for each method and their corresponding standard errors. 
We observe that DTRESLO with wavelet treatment policies has the highest value function estimate. Also, the estimated standard deviations of non-linear (nonparametric) DTRESLO estimators are smaller compared to the nonparametric counterparts of BOWL, SOWL, and Q-learning.
It must be noted that the linear DTRESLO outperforms linear $Q$-learning, which was  observed in all our simulation settings as well.
Also, as in our simulations,
SOWL has lower value function estimate compared to all other methods. This aligns with our observations with SOWL in Section~\ref{sec: simulation}.


  \section{Discussion}
  \label{sec: discussion}
 {\color{black}  Our work\label{page: first step} \emph{is the first step} towards a unified understanding of general surrogate losses in the simultaneous optimization  context.} Our work leaves ample room for  modification and generalization to complex real-world scenarios. We list below some important open questions:

{\color{black} \label{page: open problem} 
Regarding the optimization error, we have analyzed linear-type treatment policies under conditions with a primary focus on  landscape analysis.  However, our simulations in Section \ref{sec: simulation} indicates that DTRESLO performs competitively to popular  DTR methods, regardless of whether the policies are linear or non-linear. Therefore,  a more comprehensive analysis of the optimization error is required to gain deeper insight into the performance of   DTRESLO.}

  The theoretical results in this paper consider the propensity scores to be known. They may be available in SMART studies, but they need to be estimated for observational studies. At best, we may be able to estimate the propensity scores at $n^{-1/2}$-rate. Therefore it is possible that in this situation,  our regret-decay rate will slow down. We also do not know if it is at all possible to push the regret decay to $O(1/n)$ in this situation because we do not know the minimax rate of regret-decay in this context. However, there is a more pressing issue with the use of inverse propensity score weighting. The weight will grow smaller as the number of stages increases, leading to a highly  volatile method \citep{laber2019}. However, there are strategies \citep{kallus2018} that can be incorporated to ensure robustness. Research in this direction is needed to increase the stability of our DTRESLO method. 
  
  Also, there are many choices of  $\phi$'s that satisfy Condition~\ref{cond: sigmoid  condition}, and hence can be used for DTRESLO.  In this paper, we have not considered the problem of selecting a $\phi$. We fixed a particular $\phi$ in our empirical study but the performance may be improved by a more careful tuning of $\phi$. 
  
  The DTRESLO method easily extends to $K > 2$ by using a surrogate $\psi(x_1,\ldots,x_K)=\phi(x_1)\ldots\phi(x_K)$. Although we do not yet know whether  Fisher consistency still holds, our proof techniques are readily extendable to the higher stages via mathematical induction. If our DTRESLO method is Fisher consistent for general $K$ stages,  the pattern of error accumulation over stages will be an immediate interest. For Q-learning, the regret grows exponentially with the number of stages \citep{murphy2005}.  In view of \cite{wang2020statistical},  exponential error accumulation may sometimes be inevitable under very  general  conditions. However,  we wonder whether our simultaneous maximization procedure escapes the exponential error accumulation in the presence of noise conditions.

  Despite being of immense practical interest, this area greatly lacks direct search method with rigorous guarantees in multi-stage settings. Direct search method with more than two levels of treatment requires integration of multicategory classification with the sequential setting of DTR, and hence is conceptually more challenging than the regression-based counterparts. However, we expect that DTRESLO can be extended to identify optimal DTRs under this more complex setting. 
  Detailed strategies for identifying the surrogate loss and implementing algorithms to estimate DTRs in practice warrant future research.

 \section{Acknowledgments}
Rajarshi Mukherjee and Nilanjana Laha's research was supported by National Institutes of Health grant P42ES030990.  Tianxi Cai's research was supported by National Institutes of Health grant R01LM013614. Aaron Sonabend's research was supported by the Boehringer-Ingelheim Fellowship at Harvard. 

  \section{Supplement}
  Due to the size of the Supplement, it has been uploaded  on the first  author's website, and can be accessed using this  \href{https://sites.google.com/d/1KLrABaa49Yb-z0SjeUPxUruj69ktBCEQ/p/1FuwCoWWhBXb2DfKg5E0UiFOu8xYVHxKJ/edit}{link}.
  The supplement includes discussions on optimization error, additional details about the hinge loss, further exploration of various assumptions made in this paper, additional details regarding the simulation settings in Section \ref{sec: simulation}, and the proofs.

 \bibliographystyle{natbib}
\bibliography{DTR,DTR2}
\end{document}